\documentclass[10pt,a4paper]{article}
\usepackage{amsmath,amssymb,amsfonts,amsthm}
\usepackage{graphicx}
\usepackage{float,graphicx,color}
\usepackage[all]{xy}





\newcommand{\rmd}{\mathrm{d}}

\newcommand{\rmH}{\mathrm{H}}
\newcommand{\rmL}{\mathrm{L}}

\newcommand{\bbC}{\mathbb{C}}

\newcommand{\bbH}{\mathbb{H}}

\newcommand{\bbL}{\mathbb{L}}

\newcommand{\bbN}{\mathbb{N}}

\newcommand{\bbR}{\mathbb{R}}

\newcommand{\bbZ}{\mathbb{Z}}

\newcommand{\frH}{\mathfrak{H}}

\newcommand{\calA}{\mathcal{A}}

\newcommand{\calD}{\mathcal{D}}

\newcommand{\calH}{\mathcal{H}}

\newcommand{\calJ}{\mathcal{J}}
\newcommand{\calK}{\mathcal{K}}

\newcommand{\calM}{\mathcal{M}}

\newcommand{\calT}{\mathcal{T}}

\newcommand{\calZ}{\mathcal{Z}}

\newcommand{\half}{\frac{1}{2}}


\newcommand{\herm}{\textsc{h}}

\DeclareMathOperator{\Div}{div}
\renewcommand{\div}{\Div}

\DeclareMathOperator{\grad}{grad}
\DeclareMathOperator{\curl}{curl}

\DeclareMathOperator{\re}{\mathfrak{Re}}

\DeclareMathOperator{\tr}{tr}

\renewcommand{\Re}{\re}

\newcommand{\hs}{{\scriptstyle \bigstar}}
\newcommand{\bs}{{\scriptscriptstyle \bullet}}

\newcommand{\infsup}[4]{\inf_{#1}\sup_{#2}\frac{#3}{#4}}
\newcommand{\cleq}{\preccurlyeq}
\newcommand{\cgeq}{\succcurlyeq}
\newcommand{\ceq}{\approx}

\newcommand{\lls}{[ \! [}
\newcommand{\rrs}{] \! ]}

\newcommand{\ljump}{\lls}
\newcommand{\rjump}{\rrs}

\newcommand{\alter}{\mathrm{Alt}}


\newcommand{\beq}{\begin{equation}}
\newcommand{\eeq}{\end{equation}}


\newcommand{\mapping}[4]{
\left\{
\begin{array}{rcl}
\displaystyle #1  &\to& #2 ,\\
\displaystyle #3  &\mapsto      & #4.
\end{array} \right.
}




\definecolor{mygray}{gray}{0.75}
\definecolor{myyellow}{rgb}{0.9, 0.8,0.1}
\definecolor{myred}{rgb}{0.7, 0.15, 0.15}
\definecolor{myblue}{rgb}{0.15, 0.15, 0.7}



\newcounter{point}[section]


\newtheorem{theorem}{Theorem}[section]
\newtheorem{lemma}[theorem]{Lemma}
\newtheorem{corollary}[theorem]{Corollary}
\newtheorem{proposition}[theorem]{Proposition}

\theoremstyle{definition}
\newtheorem{definition}{Definition}[section]

\theoremstyle{remark}
\newtheorem{remark}{Remark}[section]

\newcounter{tpoint}[theorem]



\theoremstyle{plain}

\theoremstyle{definition}

\theoremstyle{remark}



\title{On eigenmode approximation for Dirac equations:\\ differential forms and fractional Sobolev spaces}

\author{Snorre H. {\sc Christiansen}\footnote{Department of Mathematics, University of Oslo, PO box 1053 Blindern, NO-0316 Oslo, Norway. email: {\tt snorrec@math.uio.no} }}

\date{}

\begin{document}

\maketitle

\begin{abstract}
We comment on the discretization of the Dirac equation using finite element spaces of differential forms. In order to treat perturbations by low order terms, such as those arizing from electromagnetic fields, we develop some abstract discretization theory and provide estimates in fractional order Sobolev spaces for finite element systems. Eigenmode convergence is proved, as well as optimal convergence orders, assuming a flat background metric on a periodic domain.
\end{abstract}


\bigskip

{\sc MSC}: 65N30, 65N25, 81Q05.

\smallskip

{\sc Key words}: Dirac equation, finite elements, differential forms, fractional Sobolev spaces, mollified interpolator.


\section{Introduction}
This paper is devoted to the discretization of the Dirac operator with finite element spaces of differential forms. We learned this technique in a talk by Ari Stern\footnote{January 2013 at the Joint Mathematics Meeting in San Diego.}, written up in \cite{LeoSte14}. It seems that a special case of this method, corresponding to tensor-product Whitney forms on cubes, has already been considered by physicists \cite{BecJoo82}. We also noticed the work \cite{LewSer10}, which, like this one, concerns spurious-free approximations to the Dirac equation, from a mathematically rigorous viewpoint. It contains many references to other works on this topic, including some from the physics literature. 

From the theoretical point of view, it should be noted that finite element methods, like the ones considered here, do not produce subspaces of the domain of the Dirac operator, not even of a square root, though $1/2$ is the critical exponent. Moreover the discrete Hodge decomposition is different from the continuous one, in the sense of Remark \ref{rem:vh}. Nevertheless we prove, in this paper, convergence for the spectral problem for finite element spaces  of differential forms $X_h$, constructed as finite element systems \cite{Chr08M3AS}, on quasi-uniform cellular complexes $\calT_h$, in the so-called $h$-setting, as the mesh-width $h$ goes to $0$. For simplicity we restrict attention to toruses, or, equivalently, periodic boundary conditions.
 
Actually, eigenmode convergence for the Hodge-Dirac operator, discretized with finite element differential forms, follows quite easily from the known theory developed for the Hodge-Laplace, which can be found in \cite{DodPat76}\cite{Chr07NM}\cite{ArnFalWin06}\cite{ChrWin08}\cite{ChrWin13IMA}. For an overview of discrete Hodge theory, see \cite{ArnFalWin10}. However, it seems that this theory by itself does not allow for zero-order perturbations of the Hodge-Dirac operator, as would correspond, for instance, to adding an electromagnetic field to the Dirac equation. Special emphasis is placed on covering such cases as well. These points are explained more fully  in Section \ref{sec:hoddir}, at the abstract level.

To use the abstract framework we develop, we require some estimates in Sobolev spaces $\rmH^s(S)$, with $0<s<1/2$, which are provided in Section \ref{sec:frac}. Notice that these Sobolev spaces contain piecewise smooth functions, in particular finite element differential forms, yet are compactly embedded in $\rmL^2(S)$. That part of the theory is developed without reference to the Dirac equation, for greater reusability. Remark \ref{rem:dischod} puts the results concerning discrete Hodge decompositions in context. As a preparation for these estimates we review some recent techniques for analysing finite element spaces of differential forms in Section \ref{sec:rev} and extend them in particular towards the use of two different discrete $\rmH^1$ seminorms.

The paper is organized as follows. We start by recalling some notations related to the Dirac equation expressed with differential forms, in Section \ref{sec:dir}. The functional framework used for the continuous  problem is reviewed in Section \ref{sec:func}. Finite element spaces are introduced in Section \ref{sec:rev} together with some error analysis in integer order Sobolev spaces. This analysis is extended to fractional order Sobolev spaces, in Section \ref{sec:frac}. We also develop some abstract discretization theory, in Section \ref{sec:hoddir}. Finally we combine these techniques to conclude, in Section \ref{sec:conc}.

\section{Forms of the Dirac equation \label{sec:dir}}

This section is meant mainly to fix notations and explain how the Dirac equation of physicists can be expressed with differential forms. For this purpose we make a detour via quaternions. We make no claims of originality for the facts listed here, but hope the exposition has other virtues.
\paragraph{Dirac equation.}
We adopt the following standard definitions:\\
The ($2 \times 2$ complex) Pauli matrices are:
\begin{equation}
\sigma_1 = 
\begin{bmatrix}
 0 & 1 \\
1 & 0
\end{bmatrix}
,\ \sigma_2 = 
\begin{bmatrix}
0 & -i \\
i & 0
\end{bmatrix}
,\ 
\sigma_3 =
\begin{bmatrix}
1 & 0 \\
0 & -1
\end{bmatrix}
.
\end{equation}
The ($4 \times 4$ complex) Dirac matrices are:
\begin{equation}
\gamma^0 =
\begin{bmatrix}
 I & 0 \\
0 & -I
\end{bmatrix}
,\
\gamma^k = 
\begin{bmatrix}
 0 & \sigma_k \\
-\sigma_k & 0
\end{bmatrix}.
\end{equation}
Here $I$ denotes the $2\times 2$ identity matrix.

The Dirac equation with minimal coupling to an electromagnetic gauge potential $A=(A_0, \ldots, A_3)$, is to find a function $\psi:\bbR^4 \to \bbC^4$ satisfying:
\begin{equation}
i \hbar \gamma^\mu (\partial_\mu + i A_\mu) \psi- mc \psi = 0. \label{eq:dir}
\end{equation}
Einstein summation convention is used. Such a function $\psi$ is referred to as a spinor-valued field on time-space. The three constants $\hbar$, $c$ and $m$ are referred to as reduced Planck constant, light velocity and mass, respectively. We choose units such that $\hbar = 1$ and $c= 1$ in the following.

\begin{remark} Concerning boundary conditions: In this section, focus is on the differential operators so we might as well work on the whole Minkowiski space $\bbR^4$ as on some open subset of it. In the next section, where Fredholmness of the Dirac operator is discussed, we recall how results on bounded contractible domains can be obtained from exactness of some sequences of differential forms with or without Dirichlet boundary conditions. For more general domains we just state the result. For the numerical method we consider, we prefer to avoid boundary conditions as much as possible, so we will work on a torus (periodic boundary conditions). Thus we avoid some interesting difficulties, but not all.
\end{remark}

\paragraph{Quaternions.}
The quaternions will be identified with the following space of complex $2\times 2$ matrices:
\begin{equation}
\bbH = \{
\begin{bmatrix}
a & - \bar b \\
b & \bar a
\end{bmatrix}
\ : \ a,b \in \bbC \}.
\end{equation}
Under matrix addition and multiplication, they constitute a real associative algebra, where nonzero elements are invertible.

The real elements in $\bbH$ are by definition the multiples of the identity matrix $I$. The imaginary elements are those spanned (over $\bbR$) by the elements $J_k = -i \sigma_k$, $k=1,2,3$. We have the direct sum decomposition:
\begin{equation}
\bbH = \bbR I \oplus \bbR J_1 \oplus \bbR J_2 \oplus \bbR J_3.
\end{equation}

For a vector $x \in \bbR^3$ we use the notation:
\begin{equation}
x \cdot J = \sum_{k=1}^3 x_k J_k.
\end{equation}
Any element $X$ of $\bbH$ can be uniquely written:
\begin{equation}
X= x_0 I + x \cdot J \quad \textrm{ with } x_0 \in \bbR \textrm{ and } x \in \bbR^3.
\end{equation}
We let $\Xi$ denote the identification: 
\begin{equation}
\Xi: \mapping{\bbR \oplus \bbR^3}{ \bbH}{(x_0, x)}{x_0 I + x \cdot J}
\end{equation}
Multiplication in $\bbH$ has the form:
\begin{equation}
(x_0 I + x \cdot J)( y_0 I + y \cdot J) = (x_0 y_0 - x \cdot y)I + (x_0 y + y_0 x + x \times y) \cdot J.
\end{equation}
Via $\Xi$ we get the following product on $\bbR \oplus \bbR^3$:
\begin{equation}
(x_0, x)(y_0, y) = (x_0 y_0 - x \cdot y, x_0 y + y_0 x + x \times y).
\end{equation}
Conjugation in $\bbH$ is Hermitian conjugation of matrices, denoted $X \mapsto X^\herm$. Via $\Xi$ it corresponds to the map:
\begin{equation}
 (x_0, x) \mapsto \overline{ (x_0, x)}=  (x_0, - x) \quad \textrm{ on } \bbR \oplus \bbR^3.
\end{equation} 
The Euclidean scalar product on $\bbH$ is:
\begin{equation}
X \cdot Y  = \Re (X^\herm Y) \label{eq:euc} = \half \tr (X^\herm Y) .
\end{equation}
Via $\Xi$ we recover the standard Euclidean product on $\bbR \oplus \bbR^3$:
\begin{equation}
(x_0, x) \cdot (y_0, y)  = \Re \overline {(x_0, x)} (y_0, y) = x_0 y_0 + x \cdot y.
\end{equation}

\paragraph{Differential operators.} The differential operator $J \cdot \nabla$ is by definition:
\begin{equation}
J \cdot \nabla = \sum_{k=1}^3 J_k \partial_k.
\end{equation}
It acts in particular on fields $\bbR^3 \to \bbH$ and as such is symmetric, with respect to the $\rmL^2$ product deduced from the Euclidean product (\ref{eq:euc}). On the other hand, the operator $ \sigma \cdot \nabla$, defined in a similar manner, acts from fields $\bbR^3 \to \bbH$ to  fields $\bbR^3 \to i \bbH$.
We notice that we have a direct sum decomposition (over the reals):
\begin{equation}
\bbC^{2 \times 2} = \bbH \oplus i\bbH.
\end{equation}
Moreover $ \sigma \cdot \nabla$, as an operator on fields $\bbR^3 \to \bbC^2$,  is anti-symmetric.

We change the notation slightly, and write the electromagnetic gauge potential as  combination of an electric scalar potential $V$ (corresponding to $A_0$) and an $\bbR^3$-valued magnetic vector potential $A=(A_1, A_2, A_3)$.
In terms of $2\times 2$ matrices the Dirac equation (\ref{eq:dir}), acting on $\bbC^4$-valued fields, can be written:
\begin{equation}
 i \begin{bmatrix}
 \partial_0 +i V & 0 \\
0 & -\partial_0 - iV
\end{bmatrix} \psi = -  \begin{bmatrix}
 0 & - J \cdot (\nabla  + i A)\\
J \cdot (\nabla + i A) & 0
\end{bmatrix} \psi + m \psi.
\end{equation}
We can also write this as:
\begin{equation}
 \partial_0 \psi  = 
 -  \begin{bmatrix}
 0 & \sigma \cdot (\nabla  + i A)\\
\sigma \cdot (\nabla + i A) & 0
\end{bmatrix} \psi 
- mi \begin{bmatrix}
I & 0 \\
0 & -I
\end{bmatrix}
   \psi  -i V\psi.
\end{equation}
The differential operator appearing on the right hand side is anti-selfadjoint, in $\rmL^2(\bbR^3 \to \bbC^4)$, so the differential equation may be regarded as a well-posed equation of wave type.

If we look for solutions with a time-dependence of the form $t \mapsto \exp(-iEt)$ with $E \in \bbR$, we get the self-adjoint eigenvalue problem:
\begin{equation}
E \psi = \begin{bmatrix}
 0 & J \cdot (\nabla  + i A)\\
J \cdot (\nabla + i A) & 0
\end{bmatrix} \psi + 
m
\begin{bmatrix}
I & 0 \\
0 & -I
\end{bmatrix}
\psi + V \psi. \label{eq:saeig}
\end{equation}

\paragraph{Differential forms.}
Via $\Xi$, the operator $J \cdot \nabla$ becomes the following operator on fields $\bbR^3 \to \bbR \oplus \bbR^3$:
\begin{equation}
 \Xi^{-1} (J \cdot \nabla) \Xi: 
\begin{bmatrix}
f \\
g
\end{bmatrix}
\mapsto
\begin{bmatrix}
- \div g\\
\grad f + \curl g
\end{bmatrix}.
\end{equation}

Recall the identification of alternating forms with scalars and vectors:
\begin{equation}
\alter^\bs (\bbR^3) \approx  \bbR \oplus \bbR^3 \oplus \bbR^3 \oplus \bbR.
\end{equation}
Introduce the rearrangement map:
\begin{equation}
\Theta : \begin{bmatrix}
\bbR \\
\bbR^3 \\
\bbR^3 \\
\bbR
\end{bmatrix}
\ni
\begin{bmatrix}
 s\\
 u\\
 v\\
 t
\end{bmatrix}
\mapsto
\begin{bmatrix}
s\\
v\\
t\\
u
\end{bmatrix}
\in
\begin{bmatrix}
\bbR \\
\bbR^3 \\
\bbR \\
\bbR^3
\end{bmatrix}.
\end{equation}
It can be interpreted as putting the alternating forms  of even order on top and those of odd order at the bottom.

Looking at the right hand side in (\ref{eq:saeig}), we define the differential operator:
\begin{equation}
\calD = 
\Theta^{-1}
\begin{bmatrix}
0 &  \Xi^{-1} (J \cdot \nabla) \Xi \\
 \Xi^{-1} (J \cdot \nabla) \Xi & 0 
\end{bmatrix}
\Theta.
\end{equation}
We obtain the mapping property, on scalar and vector fields:
\begin{equation} \calD : 
\begin{bmatrix}
s\\
u\\
v\\
t
\end{bmatrix}
\mapsto
\begin{bmatrix}
- \div u\\
\grad s + \curl v\\
\curl u + \grad t\\
-\div v
\end{bmatrix}.
\end{equation}
This map is of the form:
\begin{equation}
\calD = \rmd + \rmd^\star: C^\infty \alter^\bs (\bbR^3) \to C^\infty \alter^\bs (\bbR^3),
\end{equation}
for the slightly non-standard exterior derivative:
\begin{equation}
\rmd = \grad \oplus \curl \oplus -\div .
\end{equation}

Therefore $\calD$ may be referred to as a Hodge-Dirac operator.

\paragraph{Almost complex structure.} Unfortunately the magnetic vector potential does not contribute nicely under these identifications. This is related to the fact that, for $A \in \bbR^3$,   $J \cdot i A$ maps $\bbH$ not to itself but to $i \bbH$ (compare with the fact the $J \cdot \nabla$ acts as an endomorphism on smooth fields $S \to \bbH$). In other words, under the above identifications, the magnetic vector potential yields a zero order operator on  $C^\infty \alter^\bs (\bbR^3)$, but we found the expression to be unenlightening.

The following identities will not be used, but we found them intriguing. At least they provide an interpretation of the fact that the spectrum of the Dirac operator is symmetric with respect to the origin. Introduce the operator on $\alter^\bs (\bbR^3) $:
\begin{equation}
\calJ :
\begin{bmatrix}
s\\
u\\
v\\
t
\end{bmatrix}
\mapsto
\begin{bmatrix}
-t\\
v\\
-u\\
s
\end{bmatrix}.
\end{equation}
Since $\calJ^2 = -1$, $\calJ$ provides an almost complex structure on  $\alter^\bs(\bbR^3)$. Up to signs one can also identify it as a Hodge-star operator. We notice:
\begin{equation}
\calD\calJ = - \calJ \calD.\label{eq:anticom}
\end{equation}
In particular, if $\phi $ is an eigenvector of  $\calD$ with eigenvalue $\lambda$,  then $\calJ \phi$ is an eigenvector for the eigenvalue $-\lambda$.

One way of viewing this is to introduce the map:
\begin{equation}
\calZ \ : \ \mapping{\alter^\bs(\bbR^3) }{\bbC^{2 \times 2}}{(s,u,v,t)}{(s + v \cdot J) + i(t + u\cdot J)}
\end{equation}
We also let $\calH$ denote Hermitian conjugation on $\bbC^{2 \times 2}$. Some computations then provide the formulas:
\begin{equation}
\calZ \calD \calZ^{-1} = (\sigma \cdot \nabla) \circ \calH,
\end{equation}
and:
\begin{equation}
\calZ \calJ \calZ^{-1} = i.
\end{equation}
From this, identity (\ref{eq:anticom}) follows, since $\sigma\cdot \nabla$ is $\bbC$-linear, whereas $\calH$ is $\bbC$-antilinear.

\section{Functional framework for the Hodge-Dirac \label{sec:func}}

We fix some notations and recall some rather well-known results. The Hilbertian setting for the exterior derivative is in \cite{ArnFalWin10}. Additional information on the Dirac operator can be found in \cite{Tay96II}.

\paragraph{Contractible domains.} Let $S$ be a bounded contractible Lipschitz domain in $\bbR^n$, with its standard Euclidean structure. We study the Hodge-Dirac operator on the space of differential forms: 
\begin{equation}
\calD = \rmd + \rmd^\star, \textrm{ on  } O^\bs = \rmL^2 \alter^\bs(S).
\end{equation}
It is an unbounded operator and we proceed to define its domain. The grading implied by the notation $O^\bs$ is given by the degree of differential forms and the scalar product in $O^\bs$ is the $\rmL^2$ product of forms deduced from the standard Euclidean structure on $\bbR^n$.

We denote by $\gamma$ the pullback of differential forms to the boundary $\partial S$  of $S$. If $u$ is a differential form, $\gamma u$ remembers the action of $u$ only on vectors tangent to $\partial S$. We put, for integer $k$:
\begin{align}
X^k & = \{ u \in O^k \ : \rmd u \in O^{k+1}\},\\
X^k_0 & = \{ u \in X^k \ : \  \gamma u = 0 \},\\
Y^k & = \{ u \in X^k_0 \ : \ \rmd^\star u \in O^{k- 1}\}.
\end{align}
When $k$ is outside the range $[0, n]$, these spaces are $0$. In these formulas, the exterior derivative $\rmd$ is defined a priori in the sense of distributions, and $\rmd^\star$ is its formal adjoint, also defined in the sense of distributions. We define the domain of $\calD$ to be $Y^\bs$. Sometimes, to simplify notations we omit the grading, so that $O = O^\bs$, $X = X^\bs$, $X_0 = X^\bs_0$ and $Y = Y^\bs$.

It is straightforward to check that $\calD$ with domain $Y$ is symmetric. Self-adjointness follows from the next results.

Recall the following two variants of the Poincar\'e lemma:
\begin{proposition} The following two sequences, involving the exterior derivative, are exact:
\begin{equation}
0 \to \bbR \to X^0 \to \ldots \to X^n \to 0,
\end{equation}
with inclusion of constants in second position, and:
\begin{equation}
0 \to X^0_0 \to \ldots \to X^n_0 \to \bbR \to 0,
\end{equation}
with integration in second to last position.
\end{proposition}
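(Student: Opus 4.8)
The plan is to read both sequences as the $\rmL^2$ de Rham complex of $S$ --- the first with the maximal domain of $\rmd$, the second with vanishing tangential trace (the minimal domain) --- and to exploit that $S$ is contractible, so that its reduced cohomology is trivial and its cohomology relative to $\partial S$ is concentrated in top degree. I would first dispose of exactness at the two ends of each sequence by elementary arguments, then establish interior exactness by constructing bounded homotopy operators, and finally link the two sequences by Hodge-star duality.

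\emph{Ends.} For the first sequence, exactness at $\bbR$ is just injectivity of the inclusion, and exactness at $X^0$ reduces to the statement that $u \in X^0$ with $\rmd u = \grad u = 0$ is constant; this holds because $S$ is connected (contractibility implies connectedness) and a distribution with vanishing gradient is locally, hence globally, constant. Exactness at $X^n$ is surjectivity of $\rmd \colon X^{n-1} \to X^n$, i.e.\ every $\rmL^2$ top-degree form is exact. For the second sequence, exactness at $X^0_0$ is injectivity of $\rmd$ there: a constant $0$-form whose boundary trace $\gamma u$ vanishes must be $0$. Exactness at the final $\bbR$ rests on Stokes' theorem: the map $\int_S \colon X^n_0 \to \bbR$ is onto, and its kernel consists exactly of the $n$-forms of vanishing integral, which is precisely the solvability condition for $\rmd \phi = \omega$ with $\gamma \phi = 0$.

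\emph{Interior.} For the middle terms I would produce, on a domain star-shaped with respect to a ball, regularized Cartan homotopy operators of Bogovskii and Costabel--McIntosh type: bounded operators $K_k \colon O^k \to O^{k-1}$ (in fact mapping into $\rmH^1$) satisfying $\rmd K + K \rmd = \id$ in positive degree, which gives exactness of the maximal-domain complex at once; the complementary right inverse of the divergence with homogeneous boundary data supplies homotopy operators that preserve vanishing tangential trace, giving the Dirichlet version. A general contractible Lipschitz domain is then handled either by a finite cover by star-shaped pieces together with a Mayer--Vietoris/partition-of-unity gluing, or by invoking that the maximal-domain $\rmL^2$ complex computes $\rmH^\bullet(S;\bbR)$ and the Dirichlet complex computes $\rmH^\bullet(S,\partial S;\bbR)$, both of which vanish in the required degrees by contractibility and Lefschetz duality.

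\emph{Duality and the main obstacle.} The two sequences are tied together by the Hodge star $\star \colon O^k \to O^{n-k}$, a unitary with $\star \rmd \star^{-1} = \pm \rmd^\star$ that interchanges vanishing tangential and normal traces; in the language of Hilbert complexes the Dirichlet sequence is, up to signs and $\star$, the adjoint of the maximal one, so closed range and triviality of harmonic fields transfer from one to the other. I expect the genuinely delicate step to be the passage from a star-shaped to an arbitrary contractible \emph{Lipschitz} domain: such domains need not be bi-Lipschitz to star-shaped ones, so one must either run the gluing argument while keeping careful track of $\rmL^2$ membership and boundary traces under cutoffs, or justify the identification of the $\rmL^2$ complexes with topological cohomology on Lipschitz domains, which itself needs a boundary-adapted mollification and density argument. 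Quantitatively, the crux is establishing that $\rmd$ has \emph{closed range}, since only then does vanishing cohomology yield honest $\rmL^2$ primitives; this is exactly what the bounded homotopy operators, equivalently the relevant Poincar\'e inequalities, deliver.
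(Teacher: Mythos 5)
The paper does not actually prove this proposition: it is introduced with ``Recall the following two variants of the Poincar\'e lemma'' and stated without proof, as a known ingredient imported from the literature on $\rmL^2$ de Rham complexes on Lipschitz domains. So there is no in-paper argument to compare against; what you have written is a proof sketch where the author chose to cite. That said, your sketch follows the standard route and is sound in outline: the end-of-sequence cases are handled correctly (connectedness for degree $0$, Stokes for the integration map, and the absence of any compatibility condition for surjectivity of $\rmd$ onto $O^n$ in the maximal complex), the interior exactness via the Costabel--McIntosh regularized Poincar\'e and Bogovskii operators on star-shaped domains is exactly the mechanism that produces $\rmL^2$ primitives with the right boundary behaviour, and the Hodge-star unitary equivalence between the Dirichlet $\rmd$-complex and the adjoint of the maximal one is a legitimate way to transfer closed range and triviality of cohomology between the two sequences. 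You also correctly isolate the one genuinely delicate step, namely passing from star-shaped pieces to an arbitrary contractible Lipschitz domain; either of your two proposed fixes (a \v{C}ech--de Rham gluing over a finite star-shaped cover using smooth partitions of unity, which preserve both $X^k$ and $X^k_0$, or the identification of the maximal and minimal $\rmL^2$ cohomologies with $H^\bullet(S;\bbR)$ and $H^\bullet(S,\partial S;\bbR)$ via a boundary-adapted mollification and density argument) is standard and completes the proof, though neither is short enough that one should call the proposition immediate. In short: no gap in the plan, but be aware that you are reconstructing a cited result rather than matching an argument the paper supplies.
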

One deduces the following variant of the Hodge decomposition. Notice in particular the choice of boundary conditions.
\begin{proposition}
For any $v \in O^k$, with $\int v = 0$ if $k=n$, there is a unique $(u,w)\in Y^{k-1} \times Y^{k+1}$ such that:
\begin{equation}
v = \rmd u + \rmd^\star w,
\end{equation}
subject to $\int u = 0$ if $k-1 = 0$ and $\rmd^\star u = 0$  if not, as well as   $\int w = 0$ if $k+1 = n$ and $\rmd w = 0$ if not.
\end{proposition}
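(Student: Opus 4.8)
The plan is to produce an orthogonal Hodge-type decomposition of $O^k$ and read off the two potentials from it. First I would record that the two candidate summands are orthogonal: for $u\in Y^{k-1}$ and $w\in Y^{k+1}$, Green's formula gives
\[
\langle \rmd u,\rmd^\star w\rangle = \langle \rmd\rmd u, w\rangle \pm \int_{\partial S}\gamma(\rmd u)\wedge\gamma(\star w),
\]
and both terms vanish, the first since $\rmd\rmd=0$, the second since $\gamma(\rmd u)=\rmd(\gamma u)=0$ because $\gamma u=0$. Running the same computation against test forms identifies the joint orthogonal complement as the space of tangential (Dirichlet) harmonic fields
\[
\big(\rmd Y^{k-1}+\rmd^\star Y^{k+1}\big)^\perp = \frH^k := \{h\in O^k : \rmd h=0,\ \rmd^\star h=0,\ \gamma h=0\}.
\]
Here orthogonality to $\rmd Y^{k-1}$ forces $\rmd^\star h=0$, while orthogonality to $\rmd^\star Y^{k+1}$, where the free normal trace $\gamma(\star w)$ of $w$ sweeps out the boundary term, forces both $\rmd h=0$ and $\gamma h=0$.

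Second, I would compute $\frH^k$ from the two Poincar\'e sequences. A field $h\in\frH^k$ is in particular closed with vanishing tangential trace, so $h\in\ker(\rmd|_{X^k_0})$; by exactness of the second sequence this kernel is $\rmd X^{k-1}_0$ for $k<n$, so $h=\rmd a$ with $a\in X^{k-1}_0$, and then $\|h\|^2=\langle\rmd a,h\rangle=\langle a,\rmd^\star h\rangle=0$ (no boundary term since $\gamma a=0$), whence $h=0$. For $k=n$ the second sequence gives instead $\ker(\textstyle\int)=\rmd X^{n-1}_0$, and a coclosed top form is a constant multiple of the volume form, so $\frH^n=\bbR\,\vol$, detected exactly by $\int$. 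Hence $\frH^k=0$ for $k<n$ and is one-dimensional for $k=n$, and the hypothesis ``$\int v=0$ if $k=n$'' is precisely $v\perp\frH^k$.

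Third comes existence, which rests on closedness of the two ranges. The decisive observation is that on $\ker(\rmd|_{X^k_0})$ the graph norm of $X^k_0$ collapses to the $L^2$ norm; since this kernel is closed in $X^k_0$ and, by the second sequence, equals $\rmd X^{k-1}_0$ (respectively $\ker\int$ for $k=n$), the exact range $\rmd X^{k-1}_0$ is closed in $O^k$. Closedness of the coexact range follows by the closed-range theorem for the adjoint pair, or equivalently by transporting the whole argument through the Hodge star, which is an $L^2$-isometry intertwining $\rmd$ with $\pm\rmd^\star$ and exchanging tangential and normal traces; dualizing the two sequences provides the Poincar\'e lemma needed to realize a coexact form as $\rmd^\star w$ with $\gamma w=0$. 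Given $v\perp\frH^k$, orthogonal projection onto these two closed, mutually orthogonal summands yields $v=\rmd u+\rmd^\star w$ with $u\in X^{k-1}_0$ and $w\in X^{k+1}_0$.

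Finally I would fix the gauge and prove uniqueness. Each potential is determined only up to a kernel ($\ker\rmd$ for $u$, $\ker\rmd^\star$ for $w$); imposing $\rmd^\star u=0$ (or $\int u=0$ when $k-1=0$) and $\rmd w=0$ (or $\int w=0$ when $k+1=n$) removes this residual freedom, which by the exact sequences is at most one-dimensional, and makes $\rmd^\star u$ and $\rmd w$ vanish, so that indeed $u\in Y^{k-1}$ and $w\in Y^{k+1}$. Uniqueness is then immediate: if two such decompositions coincide, orthogonality forces $\rmd(u-u')=0=\rmd^\star(w-w')$, and the normalizations force $u=u'$ and $w=w'$ using $\frH=0$ in the relevant degrees. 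I expect the genuine obstacle to be the coexact half --- producing $w$ with the correct tangential condition $\gamma w=0$ and with $\rmd^\star w\in O^k$, together with the $L^2$-closedness of its range. This is exactly where the boundary terms in Green's formula must be tracked and where the two sequences are needed in their Hodge-dual, normal-trace form; keeping the boundary conditions consistent across the star is the delicate bookkeeping on which the argument turns.
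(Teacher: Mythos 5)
Your overall architecture is sound and matches what the paper (which states this proposition without proof, as a consequence of the Poincar\'e lemma) presumably intends: orthogonality of the two ranges, identification of the joint orthogonal complement as the Dirichlet harmonic fields $\frH^k$, computation of $\frH^k$ from the second sequence, and the graph-norm observation showing that $\rmd X^{k-1}_0=\ker(\rmd|_{X^k_0})$ is $\rmL^2$-closed. The gap is exactly where you suspect it, and it is genuine: neither tool you invoke for the coexact half produces a potential in $Y^{k+1}$. The Hilbert-space adjoint of $\rmd$ with domain $X^k_0$ is $\rmd^\star$ with domain $\{w\in O^{k+1}\,:\,\rmd^\star w\in O^k\}$ and \emph{no} boundary condition (the vanishing tangential trace of the test form already kills the boundary term), so the closed-range theorem only yields $\ker(\rmd|_{X^k_0})^\perp=\rmd^\star\{w:\rmd^\star w\in O^k\}$, with no control on $\gamma w$ or on $\rmd w$. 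Likewise, the Hodge star converts the tangential condition on $w$ into a normal condition on $\star w$, so the $\star$-duals of the two given sequences are exact sequences for $\rmd^\star$ with either no boundary condition or the \emph{normal} one --- neither is the tangential-trace space $Y^{k+1}$ that the statement requires. As written, your existence step delivers $v_2=\rmd^\star w_0$ with $w_0$ outside $Y^{k+1}$, and there is no evident correction by an element of $\ker\rmd^\star$ that restores both $\gamma w=0$ and $\rmd w\in O^{k+2}$.

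The repair stays entirely on the Dirichlet side and needs no star. Write $v=v_1+v_2$ with $v_1$ the orthogonal projection onto $K=\rmd X^{k-1}_0$ (by exactness this is $\ker(\rmd|_{X^k_0})$ for $k<n$ and $\ker\int$ for $k=n$, closed by your graph-norm argument); your treatment of the exact part then gives $v_1=\rmd u$ with $u\in Y^{k-1}$ coclosed. For $v_2\perp K$, solve the coercive variational problem: find $z\in X^k_0\cap K^\perp$ with
\begin{equation}
\langle \rmd z,\rmd\phi\rangle=\langle v_2,\phi\rangle\quad\textrm{for all }\phi\in X^k_0,
\end{equation}
which is well posed by the Poincar\'e inequality your closed-range observation provides, and consistent on $K$ precisely because $v_2\perp K$. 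Setting $w=\rmd z$ gives $\rmd w=0$ and $\gamma w=\rmd\gamma z=0$ for free, while the variational equation says $\rmd^\star w=v_2\in O^k$; hence $w\in Y^{k+1}$ with the required normalization, and no closedness of $\rmd^\star Y^{k+1}$ need ever be invoked. Two smaller points: the residual gauge freedom in the potentials before normalization is $\ker(\rmd|_{Y^{k-1}})$, which is infinite-dimensional in general --- only the freedom surviving the conditions $\rmd^\star u=0$ and $\rmd w=0$, namely $\frH^{k-1}$ and $\frH^{k+1}$, is at most one-dimensional; and your complement computation tacitly uses that smooth compactly supported forms are dense in $X^k_0$ for the graph norm on a Lipschitz domain, which deserves at least a citation.
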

It is the main ingredient in the proof of the following:
\begin{proposition}
For any $ v^\bs \in O^\bs$, with $\int v^n = 0$,  there is a unique $u^\bs \in Y^\bs$, with $\int u^n = 0$, such that:
\begin{equation}
v^\bs = \calD u^\bs.
\end{equation}
\end{proposition}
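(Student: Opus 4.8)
The plan is to reduce everything to the Hodge decomposition of the preceding proposition, applied degree by degree, and then to reassemble a single solution; uniqueness will come from an energy identity combined with the exactness of the two Poincar\'e sequences. Since $(\calD u)^k = \rmd u^{k-1} + \rmd^\star u^{k+1}$, solving $v^\bs = \calD u^\bs$ means solving $v^k = \rmd u^{k-1} + \rmd^\star u^{k+1}$ simultaneously in all degrees. First I apply the Hodge decomposition to each component separately: for each $k$ I write $v^k = \rmd p_k + \rmd^\star q_k$ with $p_k \in Y^{k-1}$ and $q_k \in Y^{k+1}$, where the gauge conditions read $\rmd^\star p_k = 0$ (or $\int p_k = 0$ when $k-1=0$) and $\rmd q_k = 0$ (or $\int q_k = 0$ when $k+1=n$). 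The hypothesis $\int v^n = 0$ is exactly what licenses the decomposition in top degree. I then set $u^k = p_{k+1} + q_{k-1}$; both summands lie in $Y^k$, so $u^\bs \in Y^\bs$.

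The point of this reassembly is that the unwanted cross terms cancel. Indeed,
\[
(\calD u)^k = \rmd(p_k + q_{k-2}) + \rmd^\star(p_{k+2} + q_k) = \rmd p_k + \rmd^\star q_k + \rmd q_{k-2} + \rmd^\star p_{k+2},
\]
and here $\rmd q_{k-2} = 0$ (the form $q_{k-2}\in Y^{k-1}$ is closed by its gauge condition, or is of top degree), while $\rmd^\star p_{k+2} = 0$ (the form $p_{k+2}\in Y^{k+1}$ is coclosed by its gauge condition), so that $(\calD u)^k = v^k$. Finally $u^n = q_{n-1}$, because $p_{n+1}$ comes from decomposing the zero form $v^{n+1}$, and $\int q_{n-1} = 0$ by the top-degree gauge; hence $\int u^n = 0$, as required.

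For uniqueness suppose $\calD u = 0$ with $u \in Y^\bs$ and $\int u^n = 0$. Since $u^{k-1}\in X_0^{k-1}$ has $\gamma u^{k-1}=0$, integration by parts together with $(\rmd^\star)^2 = 0$ gives $\langle \rmd u^{k-1}, \rmd^\star u^{k+1}\rangle = \langle u^{k-1}, (\rmd^\star)^2 u^{k+1}\rangle = 0$, whence $\langle \rmd u, \rmd^\star u\rangle = 0$ and
\[
0 = \|\calD u\|^2 = \|\rmd u\|^2 + \|\rmd^\star u\|^2 + 2\langle \rmd u, \rmd^\star u\rangle = \|\rmd u\|^2 + \|\rmd^\star u\|^2 .
\]
Thus each $u^k$ is closed and coclosed with $\gamma u^k = 0$. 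By exactness of the second Poincar\'e sequence, a closed $u^k \in X_0^k$ with $1 \le k \le n$ is exact, $u^k = \rmd\phi$ with $\phi \in X_0^{k-1}$ (in top degree this step consumes the constraint $\int u^n = 0$); then $\|u^k\|^2 = \langle \rmd\phi, u^k\rangle = \langle \phi, \rmd^\star u^k\rangle = 0$, using $\gamma\phi = 0$ and $\rmd^\star u^k = 0$. For $k=0$, injectivity of $\rmd$ on $X_0^0$ forces $u^0 = 0$. Hence $u = 0$.

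I expect the main obstacle to be bookkeeping rather than analysis: one must thread the gauge and integral conditions of the componentwise Hodge decomposition consistently so that the cross terms $\rmd q_{k-2}$ and $\rmd^\star p_{k+2}$ genuinely vanish, and handle the boundary degrees $k=0$ and $k=n$ together with the scalar constraint $\int u^n = 0$. The energy identity $\langle \rmd u, \rmd^\star u\rangle = 0$ likewise depends precisely on the homogeneous trace encoded in $Y^\bs$; without it the argument would pick up boundary contributions.
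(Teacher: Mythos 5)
Your proof is correct and follows exactly the route the paper indicates: the componentwise Hodge decomposition of the preceding proposition is the "main ingredient," and your reassembly $u^k = p_{k+1}+q_{k-1}$ with the gauge conditions killing the cross terms is the intended argument, with the bookkeeping at $k=0$, $k=n$ and the constraint $\int u^n=0$ handled correctly. The uniqueness step via the energy identity and the exactness of the Dirichlet Poincar\'e sequence is likewise sound.
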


\paragraph{More general domains.} For more general compact Lipschitz domains $S$, including compact smooth manifolds without boundary, the inclusion map $Y \to O$ is still compact and the operator $\calD : Y \to O$ is still Fredholm of index zero. The kernel  of $\calD$ is the graded cohomology group:
\begin{equation}
G = \{u \in X_0 \ : \ \rmd u = 0 \textrm{ and } u \perp \rmd X_0 \} \label{eq:g}.
\end{equation}
We also adopt the following notations:
\begin{align}
W & = \{u \in X_0 \ : \ \rmd u = 0 \}, \label{eq:w}\\
V & = \{ u \in X_0 \ : \ u \perp W\} \label{eq:v}.
\end{align}
We have the Hodge decompositions:
\begin{align}
X_0 & = V \oplus W, \textrm{ and } W  = \rmd V \oplus G.
\end{align}
Define then an operator $\calK: O \to Y$ by inverting $\calD$ on the $O$-orthogonal of $G$. We note, for future reference:
\begin{proposition}
The following operator is compact and selfadjoint:
\begin{equation}
\calK: O \to O.
\end{equation}
\end{proposition}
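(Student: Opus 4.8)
The plan is to exhibit $\calK$ as a bounded operator $O \to Y$ followed by the compact inclusion $Y \hookrightarrow O$, and then to transfer self-adjointness from $\calD$ to its partial inverse. First I would pin down the range of $\calD$. Since $\calD : Y \to O$ is self-adjoint with kernel $G$, we have $\overline{\ran \calD} = G^\perp$; since it is Fredholm its range is closed, so in fact $\ran \calD = G^\perp$. Hence the restriction $\calD : Y \cap G^\perp \to G^\perp$ is a continuous bijection of Hilbert spaces, where $Y$ carries its graph norm $\|u\|_Y^2 = \|u\|_O^2 + \|\calD u\|_O^2$, and the bounded inverse theorem supplies a bounded inverse. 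By construction $\calK$ is this inverse precomposed with the orthogonal projection $P : O \to G^\perp = \ran \calD$; as a composition of bounded maps, $\calK : O \to Y$ is bounded.

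Compactness is then immediate: $\calK : O \to O$ factors as $O \xrightarrow{\calK} Y \hookrightarrow O$, and the inclusion $Y \to O$ is compact by hypothesis, so the composite is compact.

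For self-adjointness, fix $v, w \in O$ and set $u = \calK v$, $u' = \calK w$, so that $u, u' \in Y \cap G^\perp$ with $\calD u = P v$ and $\calD u' = P w$. I would then chase the identity $\langle \calK v, w\rangle = \langle v, \calK w\rangle$ in three forced steps: because $u \perp G$ only the $G^\perp$-part of $w$ contributes, giving $\langle u, w\rangle = \langle u, Pw\rangle = \langle u, \calD u'\rangle$; self-adjointness of $\calD$ (here both arguments lie in $\dom \calD = Y$) turns this into $\langle \calD u, u'\rangle = \langle Pv, u'\rangle$; and because $u' \perp G$ this equals $\langle v, u'\rangle = \langle v, \calK w\rangle$.

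The argument is routine once the earlier propositions are granted; the only place demanding care is the well-definedness and boundedness of $\calK$, which rests on the identification $\ran \calD = G^\perp$ --- this needs both closed range (Fredholmness) and $\ker \calD = (\ran \calD)^\perp$ (self-adjointness), so that projecting onto $G^\perp$ lands exactly in the range where the inverse is defined. It is worth noting that self-adjointness of $\calD$, rather than mere symmetry, is used twice: once to identify the range and once in the middle step of the self-adjointness computation for $\calK$.
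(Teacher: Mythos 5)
Your proof is correct and follows exactly the route the paper has in mind: the paper states this proposition without proof, but its surrounding remarks (compactness of the inclusion $Y \to O$, Fredholmness of index zero of $\calD$, and the definition of $\calK$ as the inverse of $\calD$ on the $O$-orthogonal of $G$) are precisely the ingredients you assemble. Your write-up supplies the details the paper leaves implicit --- in particular the identification $\ran \calD = G^\perp$ via closed range plus self-adjointness, and the factorization of $\calK$ through the compact embedding --- and nothing in it needs correction.
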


\section{Finite element differential forms \label{sec:rev}}

\paragraph{Discretization spaces.}

In this section we choose the domain $S$ to be of the following type. We let $(e_i)$ denote a basis of $\bbR^n$ ($1\leq i \leq n$) and define a lattice $\bbL$ by:
\begin{equation}
\bbL= \sum_{i=1}^n \bbZ e_i.
\end{equation}
Then we define:
\begin{equation}
S = \bbR^n /\bbL.
\end{equation}

We also consider a quasi-uniform sequence $(\calT_h)$ of cellular complexes. As is customary, the parameter $h>0$ also denotes the largest diameter of a cell in $\calT_h$, and we are interested in the asymptotic behavior as $h \to 0$. Quasi-uniformity can be taken to mean that $\calT_h$ has a simplicial refinement, which is quasi-uniform in the usual sense, and such that the cells of $\calT_h$ are composed of a uniformly bounded number of simplices.

\begin{remark}
Both the choice of working on a domain without boundary and the choice to restrict attention to quasi-uniform meshes, are made in order to use smoothing by convolution, in combination with standard interpolation, in the proofs. It is possible that the techniques of \cite{ChrWin08} (which introduces a space dependent smoother and an offset to treat essential boundary conditions) can be extended to treat the Dirac operator on domains with boundary and general shape-regular meshes, but the proofs are already quite technical without these complicating factors.
\end{remark}

Recall that $| \cdot |$ stands for the $\rmL^2(S)$ norm and $\langle \cdot , \cdot \rangle$ for the $\rmL^2(S)$ scalar product. We also use the notation $O = O^\bs$ for the Hilbert space of differential forms of all degrees, equipped with this scalar product. For any cell $T$ of some $\calT_h$, also the low-dimensional ones, we write:
\begin{equation}
|u|_{T}= \|u\|_{\rmL^2(T)}. 
\end{equation}

For each discretization parameter $h$, we consider a sequence of finite dimensional spaces $X^k_h$ of $k$-forms ($0 \leq k \leq n$). The exterior derivative should induce maps $\rmd: X^k_h \to X^{k+1}_h$. We define the graded space $X_h$:
\begin{equation}
X_h = \bigoplus_{k} X^k_h.
\end{equation}
The graded space $X_h$ is equipped with the endomorphism $\rmd$ and the scalar product deduced from the $\rmL^2$ product of differential forms. We denote by $W_h$ the kernel of $\rmd$ on $X_h$ and $V_h$ the orthogonal of $W_h$ in $X_h$, with respect to the $\rmL^2$ inner product. That is, comparing with (\ref{eq:w},\ref{eq:v}), we put:
\begin{align}
W_h & = \{u \in X_h \ : \ \rmd u = 0 \},\\
V_h & = \{ u \in X_h \ : \ u \perp W_h\}.
\end{align}
\begin{remark}\label{rem:vh}
A key point is that we are interested in cases where $V_h$ is not a subspace of $V$.
\end{remark}
Comparing with (\ref{eq:g}), we may also identify the graded cohomology group:
\begin{equation}
G_h = \{ u \in X_h \ : \ \rmd u = 0 \textrm{ and } u \perp \rmd X_h \}.
\end{equation}

We suppose that the spaces $X_h$ have been constructed as finite element systems on $\calT_h$, in the $h$-setting. The framework of finite element systems was developed in particular in \cite{Chr08M3AS}\cite{Chr09AWM}\cite{ChrMunOwr11}. It includes the finite element spaces of differential forms treated in \cite{Hip02}\cite{ArnFalWin06}. See \cite{ChrRap16} for more information on how standard mixed finite elements fit in the framework of finite element systems. This framework also includes some more recent finite elements developed in \cite{Chr10CRAS}\cite{ArnAwa14}\cite{CocQiu14}. These are all minimal finite element systems under various constraints, as detailed in \cite{ChrGil16}.

\paragraph{Estimates for smoothed projections.}

Estimates of the form, there exists a constant $C \geq 0$ such that for all $h$,
\begin{equation}
A_h \leq C B_h,
\end{equation}
where $A_h$ and $B_h$ are some $h$-dependent quantities (typically some norm of some elements of $X_h$) will be written:
\begin{equation}
A_h \cleq B_h.
\end{equation}

  Stability estimates for discrete Hodge decompositions have the following form:
\begin{proposition}\label{prop:basic}
(i) Choose a $k$-form $u^k\in X_h^k$. We write the discrete Hodge decomposition $u^k = \rmd v^{k-1} + v^k + g^k$, with $v^{k-1} \in V^{k-1}_h$, $v^k \in V^k_h$ and $g^k \in G_h^k$. Then we have estimates:
\begin{align}
|v^{k-1}| & \cleq |u^k|. \label{eq:vkm}
\end{align}
(ii) Continuous and discrete cohomology groups are related by:
\begin{equation}
\hat \delta(G_h, G) \to 0,
\end{equation}
where the symmetrized gap $\hat \delta$ is evaluated in the $O$-norm.\\
(iii) For any subsequence $v_h \in V_h$ with $|\rmd v_h|$ bounded, there is a subsubsequence converging in $O$ to an element in $V$.
\end{proposition}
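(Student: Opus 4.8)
The plan is to derive all three parts from one package of tools: a uniformly $\rmL^2$-bounded commuting projection $\pi_h : O \to X_h$ satisfying $\rmd \pi_h = \pi_h \rmd$, $\pi_h|_{X_h} = \id$, and $|\pi_h u - u| \to 0$ for each fixed $u \in O$ (the smoothed interpolator whose estimates are the subject of the surrounding section); the continuous Poincaré and Hodge results of Section~\ref{sec:func}; and elliptic regularity of the Hodge--Dirac system on the flat torus, i.e.\ $\|u\|_{\rmH^1} \cleq |u| + |\rmd u| + |\rmd^\star u|$. The keystone is a uniform \emph{discrete Poincaré inequality}: for $v \in V_h^{j}$ one has $|v| \cleq |\rmd v|$. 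To prove it, set $z = \rmd v$; since $z$ is exact it is $\rmL^2$-orthogonal to $G$, so the continuous Hodge decomposition produces $\eta \in V^{j}$ with $\rmd\eta = z$ and $|\eta| \cleq |z|$. As $z \in X_h^{j+1}$ we get $\rmd(\pi_h\eta - v) = \pi_h z - z = 0$, so $\pi_h\eta - v \in W_h^{j}$; since $v \perp W_h^{j}$, it follows that $v = P_{V_h}\pi_h\eta$ and hence $|v| \le |\pi_h\eta| \cleq |\eta| \cleq |z| = |\rmd v|$.

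Part (i) is then immediate. The three summands $\rmd v^{k-1}$, $v^k$, $g^k$ are mutually $\rmL^2$-orthogonal, because $\rmd v^{k-1}\in W_h^k$, $v^k \in V_h^k=(W_h^k)^\perp$, and $g^k \perp \rmd X_h$; therefore $|\rmd v^{k-1}| \le |u^k|$, and the discrete Poincaré inequality gives $|v^{k-1}| \cleq |\rmd v^{k-1}| \le |u^k|$.

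For part (ii) I would first bound the one-sided gap $\delta(G,G_h)$. Given $g \in G$ with $|g| = 1$, we have $\rmd\pi_h g = \pi_h\rmd g = 0$, so $\pi_h g \in W_h$; decompose $\pi_h g = e_h + g_h$ with $e_h \in \rmd X_h$ and $g_h \in G_h$. Testing against $e_h$ and using $g \perp \rmd X \supseteq \rmd X_h$ gives $|e_h|^2 = \langle \pi_h g, e_h\rangle = \langle \pi_h g - g, e_h\rangle \le |\pi_h g - g|\,|e_h|$, whence $|g - g_h| = |g - \pi_h g + e_h| \le 2|g-\pi_h g| \to 0$, so $\delta(G,G_h)\to 0$. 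Since the commuting projection induces an isomorphism on cohomology for $h$ small (so that $\dim G_h = \dim G$), and for subspaces of equal finite dimension the symmetrized gap coincides with the one-sided gap once the latter is $<1$, we conclude $\hat\delta(G_h,G) \to 0$.

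For part (iii), let $v_h \in V_h$ with $|\rmd v_h|$ bounded; the discrete Poincaré inequality makes $|v_h|$ bounded as well. For each $h$ solve the continuous problem $\tilde v_h \in V$ with $\rmd\tilde v_h = \rmd v_h$ (solvable as $\rmd v_h$ is exact): continuous Poincaré gives $|\tilde v_h| \cleq |\rmd v_h|$, and since $\tilde v_h$ is co-closed with $\rmd\tilde v_h \in \rmL^2$, torus regularity yields a uniform $\rmH^1$ bound. By Rellich a subsequence converges, $\tilde v_h \to \tilde v$ in $O$, and the limit, lying in $\rmH^1 \subseteq X_0$ and inheriting $\tilde v \perp W$, belongs to $V$. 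It remains to compare $v_h$ with $\tilde v_h$. From $\rmd\pi_h\tilde v_h = \pi_h\rmd\tilde v_h = \rmd v_h$ we get $\pi_h\tilde v_h - v_h \in W_h$, so $v_h = P_{V_h}\pi_h\tilde v_h$ and $z_h := \pi_h\tilde v_h - v_h = P_{W_h}\pi_h\tilde v_h$ lies in $W_h \subseteq W$. Since $\tilde v \perp W$,
\[
|z_h|^2 = \langle \pi_h\tilde v_h, z_h\rangle = \langle \pi_h\tilde v_h - \tilde v, z_h\rangle \le |\pi_h\tilde v_h - \tilde v|\,|z_h|,
\]
so $|z_h| \le |\pi_h\tilde v_h - \tilde v| \to 0$ by the properties of $\pi_h$, and therefore $v_h = \pi_h\tilde v_h - z_h \to \tilde v \in V$ in $O$. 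The main obstacle is supplying the smoothed projection $\pi_h$ with all four properties at once (uniform $\rmL^2$-boundedness, commutation with $\rmd$, idempotency on $X_h$, and strong convergence to the identity); granting this, everything above is a formal consequence of continuous Hodge theory and torus regularity. A secondary subtlety, precisely the point of Remark~\ref{rem:vh}, is that the argument for (iii) must not assume $\hat\delta(V_h,V)\to 0$, which fails; the orthogonality observation $z_h \in W_h \subseteq W \perp \tilde v$ is exactly what sidesteps this.
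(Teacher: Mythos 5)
Your proof is correct, and it follows essentially the same route the paper relies on: the paper does not prove Proposition~\ref{prop:basic} in the text but cites \cite{Chr07NM}, whose arguments are exactly the ones you reconstruct — discrete Poincar\'e via the $\rmL^2$-bounded commuting projection applied to a continuous Hodge potential, gap convergence of harmonic forms from pointwise convergence of $\Pi_h$ on the finite-dimensional $G$, and discrete compactness by comparing $v_h$ with $\frH v_h$ through $\Pi_h$ (your orthogonality step for $z_h$ is a re-derivation of the paper's Lemma~\ref{lem:mag}). No gaps worth flagging.
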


Recall that the gap and symmetrized gap between to subspaces $A_h$ and $B_h$ of a normed vector space, are defined as:
\begin{align}
\delta(A_h, B_h) & = \sup_{u \in A_h \, : \, \| u \| = 1} \inf_{v \in B_h} \| u - v \|,\\
\hat \delta (A_h, B_h) & = \max \{ \delta(A_h, B_h), \delta(B_h,A_h) \}.
\end{align}

Proofs of the above three statements (i), (ii) and (iii) were provided for closed manifolds in Proposition 9, Corollary 4 and Corollary 5 of \cite{Chr07NM}. That $G_h$ and $G$ have the same dimension is essentially de Rham's theorem. That the symmetrized gap between $G_h$ and $G$ goes to $0$ can be interpreted as an example of eigenmode convergence. Convergence of other eigenmodes usually relies on the third property, referred to as discrete compactness. 

Discrete compactness properties, as introduced by Kikuchi \cite{Kik89} for Maxwell eigenmode problems in cavities, have been thoroughly studied in finite element contexts \cite{Bof10}. We refer to \cite{ChrWin13IMA} for more details on eigenmode convergence at an abstract level, exploring both sufficiency and necessity of various conditions on discretization spaces. A first proof of eigenmode convergence for the Hodge-Laplace,  discretized with Whitney forms, was provided in \cite{DodPat76}. See \cite{ArnFalWin10} for a more detailed overview on continuous and discrete Hodge theory.

\begin{remark}\label{rem:ggh}
When $S$ is a flat torus, the kernel $G$ consists of the constant differential forms, and so does $G_h$ provided that they are contained in the Galerkin space, so that $\hat \delta(G_h, G) = 0$. However we prefer to refrain as much as possible from appealing to this special property of toruses in our discussions.
\end{remark}

The most efficient tool so far to prove discrete compactness and related estimates, seems to be a combination of standard interpolation with a smoothing technique, a method that has been developed in  \cite{DodPat76}\cite{Sch08}\cite{Chr07NM}\cite{ArnFalWin06}\cite{ChrWin08}\cite{ChrMunOwr11}. In this paper we use the simplest smoothing technique, consisting of smoothing by convolution.

Let then $\phi$ be a mollifier: a smooth function on $\bbR^n$ which is supported in the unit ball and positive (in the French sense, which is more permissive), with integral $1$. For $\epsilon >0$ we define $\phi_\epsilon$ on $\bbR^n$ by:
\begin{equation}
\phi_\epsilon(x) = \epsilon^{-n} \phi(\epsilon^{-1} x),
\end{equation}
so that $\phi_\epsilon$ is supported in the ball with radius $\epsilon$ and has integral $1$.

We let $I_h$ be a standard interpolator onto $X_h$, defined for smooth functions and commuting with the exterior derivative. Typically $I_h$ is the projection deduced from a choice of degrees of freedom and is ill-defined on $O$.

We recall the main steps in the construction of mollified interpolators $\Pi_h: O \to X_h$.  They can also be referred to as smoothed projections.
\begin{proposition} \label{prop:l2stab}
 (i)  For any $\delta >0$, by fixing $\epsilon$ small enough, we have for all $h$ and all $u \in X_h$:
\begin{align}
| u - \phi_{\epsilon h} \ast u | \leq \delta |u |,\\
| u - I_h \phi_{\epsilon h} \ast u | \leq \delta |u |.
\end{align}
In particular, for $\delta <1$ we get a norm-equivalence on $X_h$:
\begin{equation}
|u| \ceq | \phi_{\epsilon h} \ast u | \ceq  | I_h (\phi_{\epsilon h} \ast u) |,
\end{equation}
(ii) For small enough fixed $\epsilon>0$, the operators:
\begin{equation}\label{eq:pio}
\mapping{O}{X_h}{u}{I_h \circ (\phi_{\epsilon h} \ast u)} 
 \end{equation}
are $h$-uniformly stable $O \to O$.\newline
(iii) For a small enough fixed $\epsilon >0$, composing (\ref{eq:pio}) with the inverse of its restriction to $X_h$, we get a projector $\Pi_h: O \to X_h$ which is uniformly stable  $O\to O$  and commutes with the exterior derivative on $X$.
\end{proposition}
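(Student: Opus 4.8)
The plan is to treat the three claims in order and to build the projector $\Pi_h$ in the last step out of the operator $Q_h = I_h \circ (\phi_{\epsilon h}\ast\,\cdot\,)$ analysed in the first two. Two structural facts are used throughout and come for free: convolution $u\mapsto\phi_{\epsilon h}\ast u$ is a contraction on $O$ (Young's inequality, as $\phi\geq0$ has unit integral) and commutes with $\rmd$ in the distributional sense, so, since $I_h$ commutes with $\rmd$ by hypothesis, so does $Q_h$ wherever it is defined; moreover, as $\phi_{\epsilon h}$ is supported in a ball of radius $\epsilon h\leq h$, the value of $\phi_{\epsilon h}\ast u$ on a cell $T$ depends only on $u$ in the $\epsilon h$-neighbourhood of $T$, a locality that makes all summations over cells harmless. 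For the first inequality of (i), I would write $u-\phi_{\epsilon h}\ast u$ for $u\in X_h$ as an average of translates $u-u(\cdot-y)$ over $|y|\leq\epsilon h$ and bound these by $\epsilon h$ times a discrete $\rmH^1$ seminorm $|u|_{1,h}$ of Section \ref{sec:rev} (the cellwise-gradient part together with the scaled face-jump part); the inverse inequality $|u|_{1,h}\cleq h^{-1}|u|$ then yields $|u-\phi_{\epsilon h}\ast u|\cleq\epsilon|u|$, which lies below $\delta|u|$ once $\epsilon$ is small. The two norm equivalences for $\delta<1$ follow from the two displayed inequalities by the reverse triangle inequality.

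The delicate point, which I expect to be the main obstacle, is the second inequality of (i), namely $|u-I_h(\phi_{\epsilon h}\ast u)|\cleq\epsilon|u|$ for $u\in X_h$. The naive route of bounding $|\phi_{\epsilon h}\ast u-I_h(\phi_{\epsilon h}\ast u)|$ by the standard interpolation error $h\,|\nabla(\phi_{\epsilon h}\ast u)|$ is useless, because $\phi_{\epsilon h}\ast u$ oscillates on the scale $\epsilon h\ll h$ so that its gradient is of size $\epsilon^{-1}|u|$, giving a bound that blows up as $\epsilon\to0$. Instead I would use that $I_h$ reproduces $X_h$, so that
\begin{equation}
u-I_h(\phi_{\epsilon h}\ast u)=I_h\bigl(u-\phi_{\epsilon h}\ast u\bigr),
\end{equation}
and estimate $I_h$ applied to the specific small function $g=u-\phi_{\epsilon h}\ast u$ cell by cell. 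Since $I_h$ is not $\rmL^2$-stable, this requires a local bound for the degrees of freedom of $g$ on each cell and its faces in terms of $|g|$ and the discrete $\rmH^1$ seminorms, obtained from trace and inverse inequalities, together with control of the jump contributions produced by convolving the distributional gradient across faces. Combined with $|g|\cleq\epsilon|u|$ from the first inequality, this is what delivers the $\epsilon$-smallness, and it is precisely here that the two discrete $\rmH^1$ seminorms, and the care taken over them in Section \ref{sec:rev}, earn their keep.

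For (ii), fix $\epsilon$ and prove $|I_h(\phi_{\epsilon h}\ast u)|\cleq|u|$ for all $u\in O$. Now $w=\phi_{\epsilon h}\ast u$ is smooth, so the standard local $\rmH^1$-stability estimate $|I_h w|_T\cleq\sum_{T'\sim T}(|w|_{T'}+h\,|\nabla w|_{T'})$ applies; writing $\tilde T'$ for the $\epsilon h$-neighbourhood of $T'$, the locality and $\rmL^2$-boundedness of convolution give $|w|_{T'}\cleq|u|_{\tilde T'}$, while $h\,|\nabla w|_{T'}=h\,|(\nabla\phi_{\epsilon h})\ast u|_{T'}\cleq\epsilon^{-1}|u|_{\tilde T'}$. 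Squaring, summing over cells, and using the finite overlap of the neighbourhoods gives a bound $C_\epsilon|u|$ whose constant depends on the now-fixed $\epsilon$ but is uniform in $h$, which is the asserted stability.

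Finally, for (iii), the second inequality of (i) with $\delta<1$ shows that $Q_h|_{X_h}:X_h\to X_h$ is within distance $\delta$ of the identity in operator norm, hence invertible with $\|(Q_h|_{X_h})^{-1}\|\leq(1-\delta)^{-1}$, and I set $\Pi_h=(Q_h|_{X_h})^{-1}\circ Q_h$. Idempotency and $\ran\Pi_h=X_h$ are immediate, since $Q_h$ maps $X_h$ into $X_h$ and $\Pi_h$ restricts to the identity there; $h$-uniform stability $|\Pi_h u|\cleq|u|$ follows by composing the bound from (ii) with the uniform bound on the inverse. Commutation with $\rmd$ passes from $Q_h$ to $(Q_h|_{X_h})^{-1}$ because $Q_h$ commutes with $\rmd$ on $X_h$ and $\rmd$ maps $X_h^k$ into $X_h^{k+1}$, and hence to $\Pi_h$ on all of $X$.
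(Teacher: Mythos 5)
The paper itself does not prove this proposition: it is recalled from \cite{Chr07NM} and \cite{ChrMunOwr11}, where all three steps are obtained by scaling to a compact family of reference macroelements and exploiting finite-dimensionality. Your architecture is the same as that standard construction --- smallness of $u-\phi_{\epsilon h}\ast u$ and of $u-I_h(\phi_{\epsilon h}\ast u)$ on $X_h$ for small $\epsilon$, uniform $\rmO\to\rmO$ boundedness of $I_h\circ(\phi_{\epsilon h}\ast\cdot)$ for fixed $\epsilon$, then a Neumann-series inversion on $X_h$ --- and part (iii) is complete and correct as you wrote it. But the two middle estimates contain a common gap.

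The gap is that you treat $I_h$ as if it were locally $\rmH^1$-bounded. In (ii) you invoke $|I_h w|_T \cleq \sum_{T'}(|w|_{T'}+h|\nabla w|_{T'})$, and in the second inequality of (i) you propose to bound the degrees of freedom of $g=u-\phi_{\epsilon h}\ast u$ ``in terms of $|g|$ and the discrete $\rmH^1$ seminorms, obtained from trace and inverse inequalities.'' For the finite element systems in play the degrees of freedom are integrals over subcells of codimension up to $n$ (vertices for $0$-forms, edges for $1$-forms in $3$D), and such traces are not controlled by $\rmL^2$ or $\rmH^1$ norms on the cell --- this is precisely why $I_h$ is ill-defined on $O$ and why the mollification is needed in the first place, so the estimate as stated is false for Lagrange and N\'ed\'elec elements in $n\geq 2$. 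Both steps survive, but by different means. For (ii) one uses that $w=\phi_{\epsilon h}\ast u$ is smooth and bounds the reference-element interpolator by $\|w\|_{\rmL^\infty}$ (or a $\rmC^m$ norm), paying $\|\phi_{\epsilon h}\ast u\|_{\rmL^\infty(T)}\cleq(\epsilon h)^{-n/2}|u|_{\calM(T)}$; the constant degrades like a negative power of $\epsilon$, which is harmless since $\epsilon$ is fixed. For (i) the cited proofs do not estimate the dofs of $g$ quantitatively at all: on a reference macroelement the map $u\mapsto u-I(\phi_\epsilon\ast u)$ acts on a fixed finite-dimensional space, the dofs of $\phi_\epsilon\ast u$ converge to those of $u$ as $\epsilon\to 0$ (the traces entering the dofs are single-valued by the inter-element continuity built into the element), and compactness of the unit sphere gives $\sup_{|u|=1}|u-I(\phi_\epsilon\ast u)|\to 0$ uniformly over the reference configurations; one then scales and sums. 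Your identity $u-I_h(\phi_{\epsilon h}\ast u)=I_h(u-\phi_{\epsilon h}\ast u)$ is a reasonable starting point, but the route you sketch after it would fail for the codimension-$\geq 2$ dofs. A last, minor point: in the first inequality of (i) the jump contribution to $\|u-\tau_y u\|$ scales like $|y|^{1/2}\|\ljump u\rjump\|$, not $|y|$, so the bound you get is $\epsilon^{1/2}|u|$ rather than $\epsilon|u|$; the conclusion is unaffected.
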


The $\epsilon$ appearing in this proposition is chosen so that in particular the $(\epsilon h)$-neighborhood of any cell $T \in \calT_h$ is included in the macroelement consisting of cells in $\calT_h$ touching $T$. The constants that are implicit in the estimates will typically depend on the constants that determine the shape-regularity and quasi-uniformity of the family of meshes.

One way to sum up the virtues of the smoothed projections is to say that commutation with the exterior derivative guarantees that $W_h$ is a nice subspace of $W$, whereas stability in $O$ guarantees that $V_h$ is close to $V$. These themes will be more amply developed below, in particular by giving several precise interpretations of the second principle.

In all of the following, $p$ is a natural number, which could very well be $0$. We suppose that the finite element spaces contain the differential forms which are polynomials of degree up to $p$. We choose the mollifier $\phi$ so that for any polynomial $u$ of degree up to $p$:
\begin{equation}
\int \phi(-x)u(x) \rmd x = u(0).
\end{equation}
Then convolution by $\phi_{\epsilon h}$ preserves polynomials of degree up to $p$. 

We get the following optimal orders of convergence:
\begin{proposition} We have:
\begin{equation}
| u -\Pi_h u | \cleq | u - I_h (\phi_{\epsilon h} \ast u) |   \cleq h^{p+1} | \nabla^{p+1} u|.
\end{equation}
\end{proposition}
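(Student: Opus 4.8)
The plan is to establish the two estimates separately: the first $\cleq$ is a quasi-optimality statement that follows almost directly from Proposition \ref{prop:l2stab}, while the second $\cleq$ combines a smoothing (Taylor) estimate with a standard cellwise interpolation estimate.

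For the first inequality I would argue as follows. By Proposition \ref{prop:l2stab}(iii), $\Pi_h$ is a projector onto $X_h$ that is uniformly $O$-stable, so $\| I - \Pi_h\|_{O \to O} \cleq 1$. Write $Q_h u = I_h(\phi_{\epsilon h} \ast u)$, which lies in $X_h$. Since $\Pi_h$ restricts to the identity on $X_h$, we have $\Pi_h Q_h u = Q_h u$, whence $u - \Pi_h u = (I - \Pi_h)(u - Q_h u)$. Taking $O$-norms and using the uniform bound on $\| I - \Pi_h\|$ yields $|u - \Pi_h u| \cleq |u - Q_h u| = |u - I_h(\phi_{\epsilon h} \ast u)|$, which is exactly the first $\cleq$.

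For the second inequality I would split
\begin{equation}
u - I_h(\phi_{\epsilon h} \ast u) = (u - \phi_{\epsilon h} \ast u) + (\phi_{\epsilon h} \ast u - I_h(\phi_{\epsilon h} \ast u))
\end{equation}
and bound each term by $h^{p+1}|\nabla^{p+1} u|$. For the smoothing term I would use that convolution by $\phi_{\epsilon h}$ reproduces polynomials of degree up to $p$: expanding $u$ by Taylor's formula to order $p$ inside the convolution integral, the polynomial part is reproduced exactly and only the $(p+1)$-th order integral remainder survives; since $\phi_{\epsilon h}$ is supported in the ball of radius $\epsilon h$, Young's (or Minkowski's) inequality gives $|u - \phi_{\epsilon h} \ast u| \cleq (\epsilon h)^{p+1} |\nabla^{p+1} u| \cleq h^{p+1} |\nabla^{p+1} u|$. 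For the interpolation term, $v := \phi_{\epsilon h} \ast u$ is smooth, so a standard reference-cell Bramble--Hilbert argument together with shape-regular scaling (using that $I_h$ reproduces the polynomials of degree $\leq p$ contained in $X_h$) gives $|v - I_h v| \cleq h^{p+1} |\nabla^{p+1} v|$. Finally, convolution commutes with differentiation and $\|\phi_{\epsilon h}\|_{\rmL^1} = 1$, so $|\nabla^{p+1} v| = |\phi_{\epsilon h} \ast \nabla^{p+1} u| \leq |\nabla^{p+1} u|$ by Young's inequality; adding the two bounds yields the second $\cleq$.

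The hard part will be the cellwise interpolation estimate for the finite element differential forms interpolator $I_h$ on smooth functions. This is where essentially all the technical work lies: one must control the degrees of freedom (including the trace-type functionals), run a Bramble--Hilbert argument on the reference cell, and transfer the $(p+1)$-th derivatives to a macroelement by a scaling argument that relies on quasi-uniformity. The choice of $\epsilon$, made precisely so that the $(\epsilon h)$-neighborhood of each cell stays inside its macroelement, is exactly what allows the convolution to interact cleanly with this local structure; by contrast, the quasi-optimality step and the Young/Taylor estimates are routine once Proposition \ref{prop:l2stab} is in hand.
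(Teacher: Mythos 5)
Your first inequality is fine: $I_h(\phi_{\epsilon h}\ast u)\in X_h$, $\Pi_h$ is a uniformly $O$-bounded projector onto $X_h$ (Proposition \ref{prop:l2stab}(iii)), so $u-\Pi_h u=(I-\Pi_h)\bigl(u-I_h(\phi_{\epsilon h}\ast u)\bigr)$ gives quasi-optimality. The bound $|u-\phi_{\epsilon h}\ast u|\cleq (\epsilon h)^{p+1}|\nabla^{p+1}u|$ via the moment condition, Taylor's formula and Minkowski's integral inequality is also correct.

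The gap is in the step you yourself flag as the hard part: the claimed cellwise estimate $|v-I_h v|\cleq h^{p+1}|\nabla^{p+1}v|$ for $v=\phi_{\epsilon h}\ast u$, with only the $\rmH^{p+1}$ seminorm of $v$ on the right, is false for the canonical interpolator of finite element differential forms at low degree. The paper explicitly allows $p=0$; for lowest-order Whitney $1$-forms in dimension $3$ the degrees of freedom are edge integrals, which are not bounded functionals on $\rmH^{1}$ of a cell (edges have codimension $2$), so the Bramble--Hilbert argument on the reference cell cannot be run with $W=\rmH^{p+1}(\hat T)$ --- and if the estimate held for all smooth $v$ with a uniform constant it would extend by density, contradicting this unboundedness. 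In other words, bounding $|\nabla^{p+1}v|\leq|\nabla^{p+1}u|$ by Young at the end throws away exactly the extra smoothness of $\phi_{\epsilon h}\ast u$ that makes the interpolation well behaved. There are two standard repairs. The one the paper's framework is built for is not to split at all: the composite $u\mapsto I_h(\phi_{\epsilon h}\ast u)|_T$ depends only on $u|_{\calM(T)}$, is uniformly $\rmL^2(\calM(T))\to\rmL^2(T)$ bounded after scaling (Proposition \ref{prop:l2stab}(ii)), and reproduces polynomials of degree $\leq p$ (convolution preserves them by the moment condition, $I_h$ reproduces them since $\poly_p\subseteq X_h$); then $|(I-I_h\phi_{\epsilon h}\ast)(u-q)|_T\cleq|u-q|_{\calM(T)}$ for every $q\in\poly_p$, Deny--Lions on the scaled macroelement gives $\inf_q|u-q|_{\calM(T)}\cleq h^{p+1}|\nabla^{p+1}u|_{\calM(T)}$, and one sums over $T$ using bounded overlap. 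Alternatively your split can be salvaged, but only by estimating $|v-I_h v|_T\cleq\sum_{j=p+1}^{p+1+m}h^j|\nabla^j v|$ with $m$ large enough that the trace functionals are bounded on $\rmH^{p+1+m}(\hat T)$, and then buying back the excess derivatives of the mollified function with $|\nabla^j(\phi_{\epsilon h}\ast u)|\cleq(\epsilon h)^{p+1-j}|\nabla^{p+1}u|$; as written, your chain breaks at its first link.
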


\paragraph{A discrete $\rmH^1$ semi-norm.}
The following discrete semi-norn is defined for fields $u$ which are piecewise of class $\rmH^1$ with respect to $\calT_h$:
\begin{equation}
\lceil u \rceil ^2_h= \sum_{T} | \nabla u |^2_T + \sum_{T'} h_T^{-1} | \ljump  u \rjump |_{T'}^2.
\end{equation}
On the right hand side, the first term is the broken $\rmH^1(S)$ semi-norm, and the second term collects jumps on codimension one interfaces, with an appropriate scaling.

Remark that $\rmd^\star u$ is not well defined in $\rmL^2(S)$ for $u \in X_h$. However it may be defined weakly by integration by parts and we then have a bound on the norm, as follows.

\begin{proposition} \label{prop:dshonesemi}
Suppose $u$ is piecewise $\rmH^1$ with respect to the mesh $\calT_h$ (for instance $u \in \rmH^1\alter^\bs(S) + X_h$). We have an estimate:
\begin{equation}
\sup_{v \in X_h} \frac{|\langle u , \rmd v\rangle|}{|v|} \cleq \lceil u \rceil_h.
\end{equation}
\end{proposition}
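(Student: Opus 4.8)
The plan is to integrate by parts cell by cell, separating a volume contribution (controlled by the broken gradient) from the interface contributions (controlled by the jumps), and then to absorb the boundary trace of the test form $v$ using a discrete trace inequality that is available precisely because $v\in X_h$. Fix $v\in X_h$; on each cell $T$ the restriction $u|_T$ is $\rmH^1$ and $v|_T$ is smooth, so the integration-by-parts formula for $\rmd$ and its formal adjoint reads
\[
\int_T \langle u, \rmd v\rangle = \int_T \langle \rmd^\star u, v\rangle + b_{\partial T}(u,v),
\]
where $\rmd^\star u$ is the cellwise codifferential (well defined in $\rmL^2(T)$ since $u$ is piecewise $\rmH^1$) and $b_{\partial T}(u,v)$ is a boundary pairing of the normal trace of $u$ against the tangential trace $\gamma v$. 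Summing over all cells of $\calT_h$ and using that $S$ is a torus (so there are no genuine boundary faces), the interface terms recombine: because $v\in X_h$ is conforming, i.e. $\gamma v$ is single-valued across each codimension-one face $T'$ while the outward normals of the two adjacent cells are opposite, the two contributions from $T'$ combine into a pairing of the jump of $u$ against $\gamma v$. Thus
\[
\langle u, \rmd v\rangle = \sum_T \int_T \langle \rmd^\star u, v\rangle + \sum_{T'} b_{T'}(\jump{u},\gamma v),\qquad |b_{T'}(\jump{u},\gamma v)|\leq |\jump{u}|_{T'}\,|\gamma v|_{T'}.
\]

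The volume term is immediate. Since the codifferential is a first-order constant-coefficient operator, $\sum_T|\rmd^\star u|_T^2 \cleq \sum_T|\nabla u|_T^2$, so by Cauchy--Schwarz over cells the first sum is bounded by $\big(\sum_T|\nabla u|_T^2\big)^{1/2}|v|$, i.e. by the broken-gradient part of $\lceil u\rceil_h$ times $|v|$. For the interface term I split the weight as $h_T^{-1/2}\cdot h_T^{1/2}$ and apply Cauchy--Schwarz over faces:
\[
\sum_{T'} |\jump{u}|_{T'}\,|\gamma v|_{T'} \leq \Big(\sum_{T'} h_T^{-1}|\jump{u}|_{T'}^2\Big)^{1/2}\Big(\sum_{T'} h_T\,|\gamma v|_{T'}^2\Big)^{1/2}.
\]
The first factor is exactly the jump part of $\lceil u\rceil_h$. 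For the second, the discrete trace inequality $|\gamma v|_{T'}^2 \cleq h_T^{-1}|v|_T^2$ for $v\in X_h$, together with the fact that each cell abuts a uniformly bounded number of faces, gives $\sum_{T'} h_T\,|\gamma v|_{T'}^2 \cleq \sum_T |v|_T^2 = |v|^2$. Combining the two bounds, dividing by $|v|$ and taking the supremum over $v\in X_h$ yields the claim.

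The main obstacle is justifying the discrete trace inequality $|\gamma v|_{T'}^2 \cleq h_T^{-1}|v|_T^2$, which is precisely what dictates the weight $h_T^{-1}$ in the definition of $\lceil\cdot\rceil_h$. For a general $\rmH^1$ field the scaled trace inequality carries an extra term $h_T|\nabla v|_T^2$; here one uses that $v$ lies in the finite element space $X_h$, which on the quasi-uniform family $\calT_h$ is piecewise polynomial with respect to the simplicial refinement, so that the inverse inequality $|\nabla v|_T \cleq h_T^{-1}|v|_T$ converts that extra term into the required pure power of $h_T$. The implicit constants therefore depend only on the shape-regularity and quasi-uniformity of $(\calT_h)$. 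The second point to keep in mind is the conformity of $X_h$: single-valuedness of $\gamma v$ across interfaces is exactly what allows the cellwise boundary integrals to recombine into jumps of $u$, rather than leaving uncontrolled one-sided face traces of the test form.
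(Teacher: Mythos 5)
Your proof is correct and follows essentially the same route as the paper's: cellwise integration by parts producing a volume term with the piecewise codifferential and interface terms that recombine into jumps of $u$ paired against the single-valued trace of $v$, followed by the discrete (scaled) trace inequality for $v\in X_h$ and Cauchy--Schwarz. The only difference is cosmetic --- the paper phrases the boundary pairing via the Hodge star, writing jumps of $\hs u$, and leaves the trace inequality to a scaling argument that you spell out via the inverse inequality on the simplicial refinement.
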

\begin{proof}
With the above notations, and introducing the Hodge star operator $\hs$, we write:
\begin{equation}
\langle u,  \rmd v \rangle = \sum_T \int_T u \cdot \rmd v =  \pm \sum_T \int_T \hs u \wedge \rmd v.
\end{equation}
Then we integrate each term on the right by parts, in the cell $T$. Collecting terms, we identify jumps of $\hs u$ on the interfaces between cells.

Next we use a discrete trace theorem for elements of $X_h$, obtained by scaling: when $T'$ is a codimension-1 face of a cell $T''$ we have an estimate:
\begin{equation}
\| v \|_{\rmL^2(T')} \cleq h_{T''}^{-1/2} \| v \|_{\rmL^2(T'')}.
\end{equation}

All in all we get:
\begin{equation}
|\langle u,  \rmd v \rangle | \cleq  \sum_T  |\int_T \rmd^\star u \cdot v  |+  \sum_{T'} h_{T''}^{-1/2} | \ljump \hs u \rjump  |_{T'}  | v |_{T''} ,
\end{equation}
where for each interface $T'$ we choose a cell $T''$ containing it.

One concludes with Cauchy-Schwarz.
\end{proof}

Recall the following result from \cite{ChrMunOwr11} Section 5.4, Proposition 5.67 (notice changes in notations compared with that paper):
\begin{proposition} \label{prop:oneequiv} 
Fix $\epsilon>0$ small enough. For all $h$ we have, for all $u \in X_h$, an equivalence of discrete seminorms:
\begin{equation}
| \nabla (\phi_{\epsilon h} \ast u) | \approx \lceil  u \rceil_h .
\end{equation}
\end{proposition}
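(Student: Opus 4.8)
The plan is to reduce the claimed equivalence to a scale-free statement on a finite list of reference macroelements, where it becomes the equivalence of two seminorms on a single fixed finite-dimensional space. The first ingredient is the locality built into the choice of $\epsilon$: since the $(\epsilon h)$-neighborhood of each cell $T$ is contained in its macroelement $M(T)$, the restriction $(\phi_{\epsilon h} \ast u)|_T$ depends only on $u|_{M(T)}$. Hence $|\nabla(\phi_{\epsilon h} \ast u)|^2 = \sum_T |\nabla(\phi_{\epsilon h} \ast u)|_T^2$ is a sum of macroelement-local contributions, exactly as $\lceil u \rceil_h^2$ is. I would then pull back each macroelement, by the affine dilation of factor $\sim h^{-1}$, to a reference configuration of unit size. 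Under this dilation $\phi_{\epsilon h}$ becomes $\phi_\epsilon$, and one checks that $\lceil u \rceil_h^2$ and $|\nabla(\phi_{\epsilon h} \ast u)|^2$ carry the same power of the mesh size; here the weight $h_T^{-1}$ on the jump term is precisely what makes the jump contribution rescale like the broken-gradient contribution. The problem thus reduces to proving, on each reference macroelement, $\lceil \hat u \rceil_1 \approx |\hat\nabla(\phi_\epsilon \ast \hat u)|$ for $\hat u$ in the reference finite element space $\hat X$.

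By quasi-uniformity together with the $h$-setting of the finite element system, there are only finitely many such reference configurations, and on each of them $\hat X$ is a fixed finite-dimensional space. Both quantities are seminorms on $\hat X$, so it is enough to show that they share the same kernel and then invoke the fact that any two seminorms with a common kernel on a finite-dimensional space are equivalent, with constants that can be taken uniform over the finite list. Now $\lceil \cdot \rceil_1$ vanishes exactly when the broken gradient and all interface jumps vanish, that is, on the constants; and $|\hat\nabla(\phi_\epsilon \ast \cdot)|$ vanishes exactly when $\phi_\epsilon \ast \hat u$ is constant, which by injectivity of $\hat u \mapsto \phi_\epsilon \ast \hat u$ (a consequence of the $\rmL^2$ norm-equivalence in Proposition \ref{prop:l2stab}(i)) again forces $\hat u$ constant. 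The kernels therefore coincide, the reference equivalence follows, and rescaling and reassembling over macroelements, using their finite overlap, yields the global estimate.

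It is worth recording that the easy half, $|\nabla(\phi_{\epsilon h} \ast u)| \cleq \lceil u \rceil_h$, also has a direct proof: writing the distributional derivative as $\nabla u = \{\nabla u\} + \mu$, where $\{\nabla u\}$ is the elementwise gradient and $\mu$ collects the interface jump measures, and convolving, Young's inequality bounds $\phi_{\epsilon h} \ast \{\nabla u\}$ by the broken $\rmH^1$ seminorm, while convolving a surface jump measure against $\phi_{\epsilon h}$ produces a function of $\rmL^2$-size $\cleq (\epsilon h)^{-1/2} |\ljump u \rjump|_{T'}$, reproducing the scaled jump terms. The main obstacle is the reverse inequality, which is genuinely nonlocal: one must recover both the elementwise gradients and the interface jumps from the single smoothed field $\phi_{\epsilon h} \ast u$, and it is exactly here that the reduction to finitely many reference spaces, combined with the injectivity of convolution on $X_h$, carries the argument.
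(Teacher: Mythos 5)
Your reduction to macroelement-local contributions, the scaling bookkeeping, and your direct proof of the easy inequality $|\nabla(\phi_{\epsilon h} \ast u)| \cleq \lceil u\rceil_h$ via the decomposition of the distributional gradient into a broken part plus interface jump measures are all sound. Note that the paper does not reprove this proposition: it imports it from \cite{ChrMunOwr11} (Proposition 5.67), and the route there is the absorption mechanism already visible in Propositions \ref{prop:l2stab} and \ref{prop:smstabh1}. One writes $\lceil u\rceil_h \leq \lceil u - I_h(\phi_{\epsilon h}\ast u)\rceil_h + \lceil I_h(\phi_{\epsilon h}\ast u)\rceil_h$, bounds the second term by $|\nabla(\phi_{\epsilon h}\ast u)|$ using macroelement-wise $\rmH^1$-type stability of $I_h$ (which preserves constants), and absorbs the first term into the left-hand side using the $\delta$-smallness estimate of Proposition \ref{prop:smstabh1}; this is exactly where ``fix $\epsilon$ small enough'' enters. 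Your approach is genuinely different, and it has a gap precisely where it tries to avoid the absorption step.

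The gap is the kernel identification for the smoothed-gradient seminorm on a reference macroelement. For the hard inequality you need a \emph{local} statement: if $\phi_\epsilon \ast \hat u$ is constant on (a neighbourhood of) the central cell $\hat T$, then $\hat u$ is constant on $\hat T$ and its jumps across $\partial\hat T$ vanish. You justify this by the injectivity of $\hat u \mapsto \phi_\epsilon\ast\hat u$ drawn from Proposition \ref{prop:l2stab}(i), but that is a global statement on $S$: it says that $\phi_{\epsilon h}\ast u$ known on \emph{all} of $S$ determines $u \in X_h$ on all of $S$. In your localized setting the hypothesis only controls $\phi_\epsilon\ast\hat u$ on a set strictly smaller than the region on which $\hat u$ must be recovered, and the restriction of $\phi_\epsilon\ast\hat u$ to $\hat T$ mixes contributions from the neighbouring cells, so global injectivity gives nothing. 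Concretely, for $\hat u$ vanishing on $\hat T$ but not on a neighbour $\hat T_1$, you must rule out that $\phi_\epsilon \ast \hat u$ vanishes identically on the slab of $\hat T$ adjacent to the common face while being nonzero elsewhere; this is a Titchmarsh-type unique-continuation property of convolution against one-sidedly supported piecewise-polynomial data, which is plausibly true but requires its own proof and is the missing idea. A secondary issue: under mere quasi-uniformity the rescaled macroelement configurations form a compact family rather than a finite list, so uniformity of the finite-dimensional equivalence constants needs an additional continuity-and-compactness argument, which the absorption proof sidesteps by reducing to per-cell estimates.
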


In what follows, recall that $\Pi_h$ denotes the smoothed projection (mollified interpolator) defined in Proposition \ref{prop:l2stab}.

For the following we refer to Proposition 5.68 of \cite{ChrMunOwr11}, which extends Proposition \ref{prop:l2stab} from $\rmL^2$ to a discrete $\rmH^1$ setting. It is proved similarly, by scaling from reference elements.
\begin{proposition} \label{prop:smstabh1} For any $\delta >0$, for fixed $\epsilon$ small enough, we have for $u \in X_h$:
\begin{equation}
\lceil u - I_h (\phi_{\epsilon h} \ast u)  \rceil_h  \leq \delta \lceil \nabla u\rceil_h.
\end{equation}
For $u\in \rmH^1\alter^\bs(S)$ we have estimates:
\begin{align}
\lceil I_h (\phi_{\epsilon h} \ast u)  \rceil_h & \cleq | \nabla u|,\\
\lceil \Pi_h u \rceil_h & \cleq | \nabla u|.
\end{align}
\end{proposition}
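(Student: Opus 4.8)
```latex
\textbf{Proof plan.}
The statement to prove is Proposition \ref{prop:smstabh1}, which extends the $\rmL^2$ stability of the mollified interpolator (Proposition \ref{prop:l2stab}) to the discrete $\rmH^1$ seminorm $\lceil \cdot \rceil_h$. There are three claims: a quasi-optimality estimate for $u \in X_h$, an $\rmH^1$-stability bound for $I_h(\phi_{\epsilon h} \ast \cdot)$ on $\rmH^1\alter^\bs(S)$, and the analogous bound for the projection $\Pi_h$. The overarching strategy is identical to the $\rmL^2$ case: reduce everything to local estimates on a reference configuration obtained by scaling, then sum over cells, exploiting the norm-equivalence of Proposition \ref{prop:oneequiv} to pass between $\lceil u \rceil_h$ and the smoothed gradient $|\nabla(\phi_{\epsilon h}\ast u)|$. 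The key advantage of this last equivalence is that $\phi_{\epsilon h}\ast u$ is a genuinely smooth field, so its honest $\rmH^1$ seminorm is available and one can differentiate the convolution directly.

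For the first estimate, I would start by applying Proposition \ref{prop:oneequiv} to the element $w := u - I_h(\phi_{\epsilon h}\ast u) \in X_h$, reducing the left side to $|\nabla(\phi_{\epsilon h}\ast w)|$. The plan is then to transplant the proof of the $\rmL^2$ bound (Proposition \ref{prop:l2stab}(i)) to the gradient level: since convolution commutes with differentiation, $\nabla(\phi_{\epsilon h}\ast w) = \phi_{\epsilon h}\ast \nabla w$ away from jump interfaces, and the scaling argument that produced smallness $\delta|u|$ in $\rmL^2$ reproduces smallness $\delta\lceil\nabla u\rceil_h$ once one accounts for one extra derivative. Concretely, one works on the reference macroelement, where $I_h$ and convolution against a fixed mollifier are bounded operators between finite-dimensional spaces, and the factor $\delta$ arises exactly as in the $\rmL^2$ proof by choosing $\epsilon$ small; the scaling by $h_T$ then distributes correctly across the broken gradient term and the jump term of $\lceil\cdot\rceil_h$, the latter being handled by the discrete trace theorem already used in the proof of Proposition \ref{prop:dshonesemi}.

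For the two stability estimates on $u \in \rmH^1\alter^\bs(S)$, the approach is to again invoke Proposition \ref{prop:oneequiv} to replace $\lceil I_h(\phi_{\epsilon h}\ast u)\rceil_h$ by $|\nabla(\phi_{\epsilon h}\ast I_h(\phi_{\epsilon h}\ast u))|$, and then bound this by $|\nabla u|$ using that both convolution and $I_h$ (on the appropriately smoothed argument) are uniformly stable with respect to the smoothed-gradient seminorm, which for genuine $\rmH^1$ fields is controlled by $|\nabla u|$ because $|\nabla(\phi_{\epsilon h}\ast u)| = |\phi_{\epsilon h}\ast \nabla u| \leq |\nabla u|$ by Young's inequality. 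The bound for $\Pi_h$ follows from that for $I_h(\phi_{\epsilon h}\ast\cdot)$ together with part (iii) of Proposition \ref{prop:l2stab}: since $\Pi_h$ is the composition of \eqref{eq:pio} with the inverse of its restriction to $X_h$, and that inverse is uniformly bounded in the relevant discrete seminorm by the norm-equivalence, the estimate transfers.

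The main obstacle I expect is the first estimate, specifically the bookkeeping in the scaling argument that converts the $\rmL^2$-smallness into $\rmH^1$-seminorm smallness. The subtlety is that $\lceil\cdot\rceil_h$ is not a scale-invariant quantity in the naive way $|\cdot|$ is: the jump term carries a weight $h_T^{-1}$, so when one passes to the reference element the gradient and jump contributions scale differently, and one must verify that the uniform smallness factor $\delta$ survives uniformly across all cells and all $h$. This requires the quasi-uniformity hypothesis on $(\calT_h)$ in an essential way and careful tracking of how the fixed reference mollifier and reference interpolator interact with the jump seminorm; the discrete trace inequality is the tool that keeps the interface terms commensurate with the volume terms under this scaling. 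Once this local-to-global scaling is set up correctly — exactly as in Proposition 5.67 and 5.68 of \cite{ChrMunOwr11} — the remaining steps are routine applications of Young's inequality, Cauchy-Schwarz, and the already-established $\rmL^2$ machinery.
```
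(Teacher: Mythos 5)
Your plan matches the paper's: the paper gives no self-contained proof of this proposition, simply referring to Proposition 5.68 of \cite{ChrMunOwr11} and noting that it is ``proved similarly, by scaling from reference elements,'' which is precisely the local-scaling argument, combined with the norm equivalence of Proposition \ref{prop:oneequiv} and the discrete trace inequality, that you outline. The one point to double-check is the right-hand side $\lceil \nabla u \rceil_h$ of the first estimate, which you take at face value but which must in effect serve as $\lceil u \rceil_h$ for your final step, where you deduce the bound on $\Pi_h$ from uniform invertibility of $I_h(\phi_{\epsilon h}\ast\cdot)$ on $X_h$ with respect to the discrete $\rmH^1$ seminorm.
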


\paragraph{Estimates on discrete Hodge decompositions.}

Recall the notations (\ref{eq:w},\ref{eq:v}). We let $\overline V$ be the completion of $V$ in $O$. We also let $\frH$ denote the $O$-orthogonal projection onto $\overline V$. It realizes a Hodge decomposition in the form $u = \frH u + (u - \frH u)$. Remark that, since $u - \frH u \in W$, we have  $\rmd \frH u = \rmd u$.

Discrete compactness properties are usually deduced from the following inequality (e.g. \cite{ChrMunOwr11} Proposition 5.66):
\begin{lemma}\label{lem:mag}
 For $u\in V_h$:
\begin{equation}
| u - \frH u| \leq | \frH u - \Pi_h \frH u|. 
\end{equation}
\end{lemma}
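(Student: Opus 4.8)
The plan is to expand $|u - \frH u|^2$ as an inner product and to insert the smoothed projection $\Pi_h$ so that the right-hand factor becomes $\frH u - \Pi_h \frH u$, after which Cauchy--Schwarz closes the argument. The whole trick is to arrange that the error $u - \frH u$ is orthogonal to $\Pi_h(u - \frH u)$; this lets me replace $u - \frH u$ by $(I - \Pi_h)(u - \frH u)$ inside the inner product at no cost, and the latter is exactly $-(\frH u - \Pi_h \frH u)$ once we use that $\Pi_h$ fixes $u$.

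First I would record the facts I need. Since $u \in V_h \subseteq X_h \subseteq X_0$, the Hodge splitting gives $u - \frH u \in W$, that is $\rmd(u - \frH u) = 0$. Because $\Pi_h$ maps into $X_h$ and commutes with $\rmd$ on $X$ (Proposition \ref{prop:l2stab}), it follows that $\rmd\,\Pi_h(u - \frH u) = \Pi_h\,\rmd(u - \frH u) = 0$, so $\Pi_h(u - \frH u) \in W_h$. Next I would verify two orthogonalities against $W_h$: the vector $u$ lies in $V_h$, hence $u \perp W_h$ by the definition of $V_h$; and $\frH u \in \overline V$, while $W_h \subseteq W$ and $\overline V \perp W$, so $\frH u \perp W_h$ as well. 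Together these give $u - \frH u \perp W_h$, and in particular $\langle u - \frH u, \Pi_h(u - \frH u)\rangle = 0$.

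With this orthogonality in hand I would write $|u - \frH u|^2 = \langle u - \frH u, (I - \Pi_h)(u - \frH u)\rangle$, the $\Pi_h$-part dropping out for free. Since $u \in X_h$ is fixed by the projector, $\Pi_h u = u$, whence $(I - \Pi_h)(u - \frH u) = \Pi_h \frH u - \frH u$. Therefore $|u - \frH u|^2 = \langle u - \frH u, \Pi_h \frH u - \frH u\rangle$, and Cauchy--Schwarz yields $|u - \frH u|^2 \le |u - \frH u|\,|\frH u - \Pi_h \frH u|$, from which the claimed estimate follows after dividing by $|u-\frH u|$ (the case $u = \frH u$ being trivial).

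The main obstacle is the bookkeeping of the orthogonality relations: one must be certain that \emph{both} $u$ and $\frH u$ are orthogonal to the \emph{discrete} kernel $W_h$, not merely to the continuous $W$. This is precisely where the inclusion $W_h \subseteq W$ together with $\overline V \perp W$ handles $\frH u$, while the commutation property of $\Pi_h$ --- which places $\Pi_h(u - \frH u)$ inside $W_h$ --- is what makes the orthogonality of $u$ usable. Once that is secured, the substitution $\Pi_h u = u$ and the final Cauchy--Schwarz step are routine.
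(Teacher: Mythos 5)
Your proof is correct and is essentially the standard argument for this ``magic'' inequality: the paper itself gives no proof, deferring to Proposition 5.66 of \cite{ChrMunOwr11}, and the argument there is exactly the one you reconstruct --- $\Pi_h(u-\frH u)\in W_h$ by commutation with $\rmd$, the orthogonality $u-\frH u\perp W_h$ (from $u\in V_h$ and $\frH u\in \overline V\perp W\supseteq W_h$), the identity $(I-\Pi_h)(u-\frH u)=\Pi_h\frH u-\frH u$ via $\Pi_h u=u$, and Cauchy--Schwarz. All the orthogonality bookkeeping you flag as the delicate point is handled correctly.
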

One deduces for instance:
\begin{proposition} \label{prop:gap} For $u \in V_h$:
\begin{equation} 
|u - \frH u | \leq h |\rmd u|.
\end{equation}
\end{proposition}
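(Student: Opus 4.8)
The plan is to reduce the estimate to an approximation property of the smoothed projection via Lemma \ref{lem:mag}, and then to supply the missing regularity by exploiting the flatness of the torus. Fix $u \in V_h$ and set $w = \frH u \in \overline V$. By Lemma \ref{lem:mag} we have $|u - \frH u| \leq |w - \Pi_h w|$, and by the remark preceding that lemma $\rmd w = \rmd u$. Hence it suffices to prove the quasi-optimal approximation bound
\[
|w - \Pi_h w| \cleq h\,|\rmd w|.
\]

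The heart of the matter is the regularity estimate $w \in \rmH^1\alter^\bs(S)$ together with $|\nabla w| \cleq |\rmd w|$. Since $\overline V$ is the $O$-orthogonal complement of the closed forms, every $w \in \overline V$ with $\rmd w \in O$ is weakly coclosed, i.e. $\rmd^\star w = 0$, and is orthogonal to the harmonic (here, constant) forms $G$. On the flat torus $S = \bbR^n/\bbL$ the Weitzenböck curvature term vanishes, so the Bochner identity reduces to $\Delta = \nabla^\star\nabla$; integrating against $w$ over the closed manifold $S$ gives
\[
|\nabla w|^2 = |\rmd w|^2 + |\rmd^\star w|^2 = |\rmd w|^2 .
\]
Equivalently, one expands $w$ in the Fourier basis $e^{2\pi i \xi\cdot x}$: orthogonality to $G$ kills the $\xi = 0$ mode, while $\rmd^\star w = 0$ forces each coefficient $\hat w(\xi)$ to be annihilated by contraction with $\xi$, so that $|\xi|^2|\hat w(\xi)|^2 = |\xi\wedge\hat w(\xi)|^2$ mode by mode; summing yields the same identity. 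In particular $w \in \rmH^1\alter^\bs(S)$ with $|\nabla w| \cleq |\rmd w|$.

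It remains to invoke the first-order approximation property of $\Pi_h$. Because the mollification preserves at least the constants (the case $p \geq 0$) and $I_h$ reproduces $X_h$, the smoothed projection satisfies the first-order estimate $|w - \Pi_h w| \cleq h\,|\nabla w|$ for every $w \in \rmH^1\alter^\bs(S)$ — this is the lowest-order instance of the optimal-order proposition for $\Pi_h$. Chaining the three estimates gives
\[
|u - \frH u| \leq |w - \Pi_h w| \cleq h\,|\nabla w| \cleq h\,|\rmd w| = h\,|\rmd u|,
\]
which is the stated bound (up to the implicit constant of the $\cleq$ convention).

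The main obstacle is the regularity step $|\nabla w| \cleq |\rmd w|$, and it is precisely here that the assumption of a flat background metric on a periodic domain is used, via the vanishing of the Weitzenböck curvature term (equivalently, via the clean Fourier computation). On a curved or merely Lipschitz domain this full $\rmH^1$ control of a coclosed form by its exterior derivative is unavailable; this is the structural reason why the later sections must retreat to the fractional spaces $\rmH^s(S)$ with $0 < s < 1/2$ rather than to $\rmH^1$.
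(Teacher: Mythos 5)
Your proof is correct and follows exactly the route the paper intends: the paper leaves the argument implicit ("One deduces for instance" after Lemma \ref{lem:mag}, with the subsequent remark flagging elliptic regularity as the key ingredient), and you supply precisely the missing chain — Lemma \ref{lem:mag}, the identity $\rmd\,\frH u = \rmd u$, the regularity bound $|\nabla\,\frH u|\cleq|\rmd u|$ from the flat-torus Bochner/Fourier identity, and the $p=0$ case of the optimal approximation estimate for $\Pi_h$. Nothing to add.
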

In this proposition, the power of $h$ will be lower in cases where full elliptic regularity does not hold. This estimate may be interpreted as a convergence of the gap $\delta(V_h, V)$ to $0$, in the $X$ norm. In \cite{ChrWin13IMA} we showed that this condition is intermediate between two discrete Friedrich estimates, equivalent to eigenmode convergence and related to estimates on bounded commuting projections, all at an abstract level, but for semi-definite problems. See Remark \ref{rem:dischod} below.

Define a projector $P_h : X \to V_h$, by imposing, for all $v\in V_h$:
\begin{equation}\label{eq:phvh}
\langle \rmd P_h u, \rmd v \rangle = \langle \rmd u, \rmd v \rangle .
\end{equation}

The following is an Aubin-Nitsche trick for $P_h$. Compared with the standard one, the main difficulty is that $V_h$ is not a subspace of $V$. As always, the discrepancy is handled with Proposition \ref{prop:gap}. The point is to obtain estimates in $O$ for a variational problem which is naturally wellposed in $V$, as in \S 3.5 of \cite{ArnFalWin10}.

\begin{proposition}\label{prop:phan}
For $u \in V$ we have an error estimate:
\begin{equation}
| u -P_h u| \cleq h |\rmd u|.
\end{equation}
\end{proposition}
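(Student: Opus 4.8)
The plan is to run the standard duality (Aubin--Nitsche) argument, but carefully accounting for the fact that $V_h \not\subset V$, which is the source of all the extra work. First I would set $e = u - P_h u$ and note that by the definition \eqref{eq:phvh} of $P_h$ we have the Galerkin orthogonality $\langle \rmd e, \rmd v\rangle = 0$ for all $v \in V_h$. To estimate $|e|$ in the $O$-norm I would introduce a dual problem: let $z \in V$ solve $\langle \rmd z, \rmd w\rangle = \langle e, w\rangle$ for all $w \in V$ (this is well posed in $V$ since $\rmd$ is injective with closed range on $V$ by the Hodge decomposition). The elliptic regularity available on the torus gives the a priori bound $|\rmd z| \cleq |e|$, or at least $z$ enjoys enough smoothness that Proposition \ref{prop:gap} and Proposition \ref{prop:phan} type estimates apply to $z$ as well. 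I would then test the dual equation against a suitable object built from $e$.

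The crux is that $e$ does not lie in $V$, so I cannot simply plug $w = e$ into the dual variational formulation. Instead I would write $|e|^2 = \langle e, e\rangle$ and split $e = \frH e + (e - \frH e)$, using the projection $\frH$ onto $\overline V$ introduced before Lemma \ref{lem:mag}. For the component $e - \frH e \in W$, I would control $|e - \frH e|$ directly: since $P_h u \in V_h$, Proposition \ref{prop:gap} applies to give $|P_h u - \frH P_h u| \leq h|\rmd P_h u|$, and together with $u \in V$ (so $\frH u = u$) and $\rmd P_h u$ bounded by $|\rmd u|$ via \eqref{eq:phvh}, this yields $|e - \frH e| \cleq h|\rmd u|$. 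For the remaining piece $\langle e, \frH e\rangle$, or more precisely after reducing to testing against an element of $V$, I would use the dual solution $z$: writing $\langle e, w\rangle = \langle \rmd z, \rmd w\rangle$ and then subtracting $P_h z$ to exploit Galerkin orthogonality, the error becomes $\langle \rmd e, \rmd(z - P_h z)\rangle$ plus consistency terms arising precisely because $e$ and $z - P_h z$ need not be in $V$.

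The main obstacle, as anticipated in the remark preceding the statement, is controlling these consistency (variational crime) terms coming from $V_h \not\subset V$. Each time I commute between $\langle e, \cdot\rangle$ and $\langle \rmd z, \rmd \cdot\rangle$ I pick up a defect measured by $|\,\cdot - \frH(\cdot)|$, and the key is that every such defect is bounded by $h$ times a $\rmd$-seminorm through Proposition \ref{prop:gap}. Collecting the Galerkin-orthogonal term $\langle \rmd e, \rmd(z - P_h z)\rangle \leq |\rmd e|\,|\rmd(z-P_h z)|$, the approximation estimate $|\rmd(z - P_h z)| \cleq |\rmd z| \cleq |e|$ together with the primal bound $|\rmd e| \cleq |\rmd u|$, and absorbing the consistency contributions which each carry a factor $h$, I expect to arrive at $|e|^2 \cleq h\,|\rmd u|\,|e| + (h|\rmd u|)\cdot|e|$, whence $|u - P_h u| \cleq h|\rmd u|$ after dividing by $|e|$. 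The delicate bookkeeping is ensuring that the $h$-factor survives on \emph{every} term and that no term requires more elliptic regularity than is available; when full regularity fails, the exponent on $h$ degrades accordingly, exactly as flagged after Proposition \ref{prop:gap}.
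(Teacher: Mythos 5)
Your overall strategy --- split $e=u-P_hu$ via $\frH$, kill the $W$-component with Proposition \ref{prop:gap}, and treat the $V$-component by duality with Galerkin orthogonality --- is exactly the paper's argument. But as written there is a genuine gap in the duality step: the factor $h$ on the main term never actually appears. You only invoke the energy bound $|\rmd z|\cleq |e|$ for the dual solution and then write $|\rmd(z-P_hz)|\cleq|\rmd z|\cleq|e|$, which is mere stability. Feeding that into $\langle \rmd e,\rmd(z-P_hz)\rangle\leq|\rmd e|\,|\rmd(z-P_hz)|$ gives only $|\frH e|^2\cleq|\rmd u|\,|e|$, i.e.\ $|\frH e|\cleq|\rmd u|$ with no power of $h$; the claimed conclusion $|e|^2\cleq h|\rmd u|\,|e|$ does not follow from the inequalities you display. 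The $h$-gain in Aubin--Nitsche must come from the \emph{approximation} estimate $|\rmd(z-v_h)|\cleq h\|z\|_{\rmH^2}$ for a suitable $v_h\in V_h$, which requires the full $\rmH^2$ elliptic regularity of the dual problem on the torus (the paper encodes this as the inf-sup condition with test functions in $V\cap\rmH^2(S)$), not just $|\rmd z|\cleq|e|$.

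Two further points you gloss over but which the paper handles explicitly. First, since $V_h\not\subset V$ you cannot approximate $z$ by an arbitrary element of $V_h$ and hope to control $\rmd$ of the difference; the paper takes $v_h=\Pi_hv-Q_h\Pi_hv\in V_h$, notes $\rmd v_h=\rmd\Pi_hv=\Pi_h\rmd v$ by commutation, and then uses the first-order $\rmL^2$ rate for $\Pi_h$ applied to $\rmd v\in\rmH^1$ to get $|\rmd(v-v_h)|\cleq h\|v\|_{\rmH^2}$. Second, the identity $\rmd\frH w=\rmd w$ lets one pass between $u-\frH P_hu$ and $u-P_hu$ inside the bilinear form at no cost, so no additional consistency terms with factors of $h$ actually arise there; the only "variational crime" term is the one already controlled by Proposition \ref{prop:gap}. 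With the $\rmH^2$ regularity inserted and the test function chosen as above, your outline closes and coincides with the paper's proof.
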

\begin{proof}
We will use elliptic regularity on the torus, in the form:
\begin{equation}
\inf_{u \in V} \sup_{v \in V \cap \rmH^2(S)} \frac{\langle \rmd u ,\rmd v \rangle}{| u | \, \|v\|_{\rmH^2}} \cgeq 1.
\end{equation}

Choose now $u\in V$. We have, using in particular Proposition \ref{prop:gap}:
\begin{align}
| u - P_h u| & \leq | u - \frH P_h u| + | P_h u - \frH P_h u |, \\
&\cleq  | u - \frH P_h u| + h | \rmd P_h u |, \\
&\cleq  | u - \frH P_h u| + h | \rmd u |. \label{eq:alm}
\end{align}
Since $ u - \frH P_h u \in V$ we may write:
\begin{align}
| u - \frH P_h u| & \cleq \sup_{v \in V \cap \rmH^2(S)}\frac{\langle \rmd (u  - \frH P_h u),\rmd v \rangle}{\|v\|_{\rmH^2}}, \\
& \cleq \sup_{v \in V \cap \rmH^2(S) }\frac{\langle \rmd (u  -  P_h u),\rmd (v - v_h) \rangle}{\|v\|_{\rmH^2}}, 
\end{align}
for any choice $v_h \in V_h$. Let $Q_h$ be the $O$-projection onto $W_h$. We choose $v_h = \Pi_h v - Q_h \Pi_h v \in V_h$ and remark that $\rmd v_h = \rmd \Pi_h v$. Therefore:
\begin{align}
| u - \frH P_h u| & \cleq  \sup_{v \in V \cap \rmH^2(S) }\frac{\langle \rmd (u  -  P_h u),\rmd (v - \Pi_h v) \rangle }{\|v\|_{\rmH^2}}, \\
& \cleq  \sup_{v \in V \cap \rmH^2(S) }\frac{| \rmd (u  -  P_h u)| \, | \rmd (v - \Pi_h v)| }{\|v\|_{\rmH^2}}. 
\end{align}
Since $ |\rmd P_h u | \cleq |\rmd u|$ and moreover:
\begin{align}
 | \rmd (v - \Pi_h v)|  = | \rmd v - \Pi_h \rmd v| \cleq h \|\rmd v\|_{\rmH^1} \cleq h \|v\|_{\rmH^2},
\end{align}
we get:
\begin{equation}
 | u - \frH P_h u| \cleq h |\rmd  u|.
 \end{equation}
Plugging this into (\ref{eq:alm}) we conclude. 
\end{proof}

\paragraph{A discrete domain semi-norm.}

In this paragraph we introduce a discrete semi-norm motivated by the domain of the Hodge-Dirac operator. One first checks that for $u \in \rmL^2\alter^\bs(S)$, $\calD u \in \rmL^2(S)$ if and only if $\rmd u \in \rmL^2$ and $\rmd^\star u \in \rmL^2$. On flat toruses one may then integrate by parts the identity:
\begin{equation}
\nabla^\star \nabla = \Delta = \rmd^\star \rmd + \rmd \rmd^\star.
\end{equation}
 One gets that $\calD u \in \rmL^2(S)$ if and only if $u\in \rmH^1(S)$. A discrete analogue of this result will play a role later on.

We define the discrete domain semi-norm:
\begin{equation}\label{eq:ysem}
[ u ]^2_h =   | \rmd u |^2 + ( \sup_{v \in X_h} \frac{ |\langle u,  \rmd v\rangle | }{|v|})^2.
\end{equation}
On $X_h \cap G_h^\perp$ it defines a norm which dominates the $\rmL^2$ norm, as follows from Proposition \ref{prop:basic}.

\begin{proposition}\label{prop:honedom}
We have a uniform equivalence of norms, for $u \in X_h$:
\begin{equation}
[ u ]_h \ceq \lceil u \rceil_h.
\end{equation}
\end{proposition}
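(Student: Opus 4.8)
The plan is to prove the two bounds $[u]_h\cleq\lceil u\rceil_h$ and $\lceil u\rceil_h\cleq[u]_h$ separately, the first being elementary and the second carrying all the weight. For the direction $[u]_h\cleq\lceil u\rceil_h$, I would argue as follows. Since $\rmd:X_h\to X_h$ is the strong exterior derivative, $\rmd u$ carries no interface contributions and is evaluated cell by cell; as $\rmd$ is a first order constant-coefficient operator (a combination of $\grad$, $\curl$ and $-\div$), one has $|\rmd u|^2=\sum_T|\rmd u|_T^2\cleq\sum_T|\nabla u|_T^2\leq\lceil u\rceil_h^2$. The second term in the definition (\ref{eq:ysem}) of $[u]_h$ is bounded by $\lceil u\rceil_h$ directly by Proposition \ref{prop:dshonesemi}. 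Adding the two estimates gives $[u]_h\cleq\lceil u\rceil_h$.

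For the reverse inequality I would pass through the smoothing. Set $w=\phi_{\epsilon h}\ast u$. By Proposition \ref{prop:oneequiv} it suffices to bound $|\nabla w|$ by $[u]_h$. Because the background is a flat torus, integrating $\nabla^\star\nabla=\rmd^\star\rmd+\rmd\rmd^\star$ by parts (no boundary terms survive) gives, for the smooth field $w$, the exact identity
\[
|\nabla w|^2=|\rmd w|^2+|\rmd^\star w|^2 .
\]
Convolution commutes with $\rmd$ and $\rmd u\in X_h$, so $\rmd w=\phi_{\epsilon h}\ast\rmd u$, and Young's inequality (or the norm equivalence in Proposition \ref{prop:l2stab}) gives $|\rmd w|\leq|\rmd u|\leq[u]_h$. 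Thus the whole reverse inequality reduces to the single estimate $|\rmd^\star w|\cleq[u]_h$.

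To attack $|\rmd^\star w|$ I would use duality together with the commuting smoothed projection $\Pi_h$. Writing $|\rmd^\star w|=\sup_\eta|\langle w,\rmd\eta\rangle|/|\eta|$ and transferring the mollifier to the test form, $\langle w,\rmd\eta\rangle=\langle u,\rmd\zeta\rangle$ with $\zeta=\tilde\phi_{\epsilon h}\ast\eta$, where $\tilde\phi(x)=\phi(-x)$ is again an admissible mollifier. Testing against $v=\Pi_h\zeta\in X_h$ and using $\rmd\Pi_h=\Pi_h\rmd$ I would split $\langle u,\rmd\zeta\rangle=\langle u,\rmd v\rangle+\langle u,\rmd(\zeta-\Pi_h\zeta)\rangle$. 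The first term is bounded by the second term of $[u]_h$ times $|v|$, and $\rmL^2$-stability of $\Pi_h$ with $|\zeta|\leq|\eta|$ gives $|v|\cleq|\eta|$, so this piece contributes $\cleq[u]_h\,|\eta|$, exactly as wanted.

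The hard part will be the consistency remainder $\langle u,\rmd(\zeta-\Pi_h\zeta)\rangle$. The obstacle is a scale mismatch: $\zeta$ oscillates on the smoothing scale $\epsilon h$, so the crude interpolation bound on $\zeta-\Pi_h\zeta$ loses inverse powers of $\epsilon$ and is useless, while pairing with $u$ by Cauchy--Schwarz produces the uncontrolled factor $|u|$. The remedy is to not estimate $\zeta-\Pi_h\zeta$ in isolation but to exploit that the remainder is $\rmd$ of a form and to pair it with $u\in X_h$ through the exact/commuting structure; concretely, this is the mechanism converse to Proposition \ref{prop:dshonesemi}, in which the $h^{-1/2}$-scaled interface jumps of $u$ (which are precisely what the second term of $[u]_h$ measures, via near-interface finite element test functions) reproduce the layer contributions of $\rmd^\star w$ concentrated near the faces. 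Once the remainder is thereby matched against $[u]_h$ (with any residual $\lceil u\rceil_h$-contribution reduced by localizing on reference macroelements and scaling, in the spirit of Proposition \ref{prop:oneequiv}), one obtains $|\rmd^\star w|\cleq[u]_h$, and combining with Proposition \ref{prop:oneequiv} completes $\lceil u\rceil_h\cleq[u]_h$.
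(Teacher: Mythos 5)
Your easy direction and your reduction of the hard direction both match the paper: $[u]_h\cleq\lceil u\rceil_h$ via Proposition \ref{prop:dshonesemi}, and then $\lceil u\rceil_h\ceq|\nabla(\phi_{\epsilon h}\ast u)|\ceq|\rmd u|+|\rmd^\star(\phi_{\epsilon h}\ast u)|$ via Proposition \ref{prop:oneequiv}, the Bochner identity on the flat torus, and Proposition \ref{prop:l2stab}. But the entire content of the proposition is the remaining estimate $|\rmd^\star(\phi_{\epsilon h}\ast u)|\cleq[u]_h$, and there your argument does not close. The consistency remainder $\langle u,\rmd(\zeta-\Pi_h\zeta)\rangle$ that you isolate is not a small perturbation: since $\zeta=\tilde\phi_{\epsilon h}\ast\eta$ oscillates on the scale $\epsilon h$ of the mesh, the best available bounds give $|\zeta-\Pi_h\zeta|\cleq\epsilon^{-1}|\eta|$ and, after elementwise integration by parts, a contribution of size $\lceil u\rceil_h\,|\eta|$ with an $O(1)$ constant. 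That is exactly the quantity you are trying to bound, with no small factor to permit absorption, so the appeal to ``localizing on reference macroelements and scaling'' cannot rescue it. The step is not a technicality you may defer; it is the theorem.

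The paper closes this gap by a mechanism absent from your proposal: it performs the \emph{discrete} Hodge decomposition $u=v+\rmd w+g$ and treats the three pieces by different arguments. For $v\in V_h$ it does not use duality at all: since $\rmd^\star(\phi_{\epsilon h}\ast\frH v)=0$, one has $|\rmd^\star(\phi_{\epsilon h}\ast v)|\cleq h^{-1}|v-\frH v|\cleq|\rmd v|$ by the gap estimate of Proposition \ref{prop:gap}. For the exact piece $\rmd w$ it uses duality, but with the test function $P_h(\phi_{\epsilon h}\ast w')$ in place of your $\Pi_h\zeta$; by the very definition (\ref{eq:phvh}) of the Galerkin projection $P_h$ the consistency remainder vanishes \emph{identically}, and the only thing left to verify is the $\rmL^2$-stability $|P_h(\phi_{\epsilon h}\ast w')|\cleq|w'|$, which is precisely what the Aubin--Nitsche estimate of Proposition \ref{prop:phan} provides. (The harmonic piece is trivial on the torus.) So the missing ideas are the splitting of $u$ and the substitution of $P_h$ for $\Pi_h$ on the exact component; without them the duality argument you set up produces an uncontrollable $O(1)$ error.
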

\begin{proof}
It follows from Proposition \ref{prop:dshonesemi} that we have a bound:
\begin{equation}
[ u ]_h \cleq \lceil u \rceil_h.
\end{equation}
We proceed to prove the converse bound.

We write, using first Proposition \ref{prop:oneequiv}, then integrating the identiy $\Delta = \rmd^\star \rmd + \rmd \rmd^\star$ by parts, and finally using Proposition \ref{prop:l2stab}:  
\begin{align}
\lceil u \rceil_h & \ceq | \nabla \phi_{\epsilon h } \ast u |,\\
& \ceq  | \rmd (\phi_{\epsilon h } \ast u) | +  | \rmd^\star (\phi_{\epsilon h } \ast u) |,\\
& \ceq  | \rmd u | +  | \rmd^\star (\phi_{\epsilon h } \ast u) |, \label{eq:dsll}
\end{align}
for $\epsilon$ small enough.

To estimate the last term in (\ref{eq:dsll}) we do the discrete Hodge decomposition of $u$:
\begin{equation}
u =  v +  \rmd w + g, \textrm{ with } v \in V_h,\ w\in W_h \textrm{ and } g \in G_h.
\end{equation}
We treat $v$, $w$ and $g$ separately.

(i) We first treat $v$. Convolution by $\phi_{\epsilon h}$ commutes with the exterior derivative, so preserves $W$. Since it is also selfadjoint it preserves the orthogonal $\overline V$. And since it improves regularity it also preserves $V$.
\begin{align}
| \rmd^\star (\phi_{\epsilon h} \ast v) | & = | \rmd^\star (\phi_{\epsilon h} \ast v  - \phi_{\epsilon h} \ast \frH v) |,\\
& \cleq | \nabla \phi_{\epsilon h} \ast (v  -\frH v)|,\\
& \cleq h^{-1} |v  -\frH v|,\\
& \cleq | \rmd v|.
\end{align}

(ii) For $w$ we write:
\begin{align}
 | \rmd^\star (\phi_{\epsilon h } \ast \rmd w) | & \cleq \sup_{w' \in V} \frac{| \langle \phi_{\epsilon h} \ast \rmd w, \rmd w'\rangle  |}{|w'|},\\
& \cleq \sup_{w' \in V} \frac{| \langle \rmd w, \rmd \phi_{\epsilon h} \ast  w'\rangle  |}{|w'|},\\
& \cleq \sup_{w' \in V} \frac{| \langle \rmd w, \rmd P_h (\phi_{\epsilon h} \ast  w')\rangle  |}{|w'|}.
\end{align} 
Then we write, for $w' \in V$, since $ \phi_{\epsilon h} \ast  w' \in V$:
\begin{align}
| P_h (\phi_{\epsilon h} \ast  w' )| & \leq | \phi_{\epsilon h} \ast  w'  | + | \phi_{\epsilon h} \ast  w' - P_h (\phi_{\epsilon h} \ast  w') |,\\
& \cleq | w' | + h |  \rmd (\phi_{\epsilon h} \ast  w') |, \\
& \cleq | w' | + h |  \rmd   w' |, \\
& \cleq | w' |.
\end{align}
Therefore:
\begin{align}
| \rmd^\star (\phi_{\epsilon h } \ast \rmd w) |   \cleq \sup_{w' \in V_h} \frac{| \langle \rmd w, \rmd  w'\rangle  |}{|w'|}.
\end{align}

(iii) For $g$ we skip a general proof. On a flat torus the constants are stable under convolution and have coderivative zero, so $| \rmd^\star (\phi_{\epsilon h } \ast g) | = 0$.
\end{proof}

\section{Fractional order Sobolev space estimates \label{sec:frac}}

In the following, we obtain estimates in fractional order Sobolev spaces, for finite element systems, using the technique of mollified interpolators. These results include three inverse estimates, stability of mollified interpolators, approximations orders and stability of discrete Hodge decompositions, all with respect to fractional order Sobolev norms. 
 
Recall the definition of the Slobodetskij semi-norms on $\rmH^s(S)$ for $s \in ]0,1[$:
\begin{equation}
\lfloor u \rfloor_{s,S}^2 = \iint_{S \times S} \frac{|u(x) - u(y)|^2}{| x - y |^{n + 2s} }\rmd x \rmd y.
\end{equation}
In this notation we may occasionally replace $S$ by a subdomain of $S$. We often omit $S$ from the notation when the whole domain is considered. Such semi-norms have already been extensively used in finite element contexts, see for instance \cite{BreSco08} (Chapter 14). See also \cite{Heu14} for details on interpolation inequalities, in particular how the constants may be taken uniform with respect to a family of domains, such as a compact family of reference macro-elements of given combinatorial structure. The interested reader is also referred to \cite{HipLiZou12}.

\paragraph{Stability and inverse inequalities.}

\begin{proposition}\label{prop:deleps}
For any $0 < s < 1/2$ and any $\delta>0$, fixing $\epsilon$ small enough, we have for all $h$ and all $u \in X_h$:
\begin{equation}
|u - \phi_{\epsilon h} \ast u | \leq \delta h^s \lfloor u \rfloor_s .
\end{equation}
\end{proposition}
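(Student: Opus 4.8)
The plan is to use nothing about $u$ beyond the cancellation $\int \phi_{\epsilon h} = 1$ and a pointwise comparison between the mollifier and the Slobodetskij kernel; the finite element structure enters only through the fact that, for $s<1/2$, the seminorm $\lfloor u \rfloor_s$ is finite on $X_h$, so the stated bound is not vacuous. First I would write, using that $\phi_{\epsilon h}$ has integral one and is supported in the ball of radius $\epsilon h$,
\[
u(x) - (\phi_{\epsilon h} \ast u)(x) = \int \phi_{\epsilon h}(z)\,\bigl( u(x) - u(x-z) \bigr)\,\rmd z ,
\]
then apply Jensen's inequality (the measure $\phi_{\epsilon h}(z)\,\rmd z$ being a probability measure) followed by Fubini--Tonelli to obtain
\[
|u - \phi_{\epsilon h} \ast u|^2 \leq \int \phi_{\epsilon h}(z)\, g(z)\, \rmd z, \qquad g(z) := \int_S |u(x) - u(x-z)|^2\, \rmd x ,
\]
where $g$ is bounded by $4|u|^2$ by periodicity, so all integrals are finite.

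The crux is a pointwise comparison of the two weights on the support of $\phi_{\epsilon h}$. For $|z| \leq \epsilon h$ one has $\phi_{\epsilon h}(z) \leq \|\phi\|_\infty (\epsilon h)^{-n}$, while $|z|^{-n-2s} \geq (\epsilon h)^{-n-2s}$, so that
\[
\phi_{\epsilon h}(z) \leq \|\phi\|_\infty\, \epsilon^{2s}\, h^{2s}\, |z|^{-n-2s} .
\]
Choosing $\epsilon$ small enough that $\|\phi\|_\infty\, \epsilon^{2s} \leq \delta^2$ — a condition depending only on $\delta$ and $s$, not on $h$ — this reads $\phi_{\epsilon h}(z) \leq \delta^2 h^{2s} |z|^{-n-2s}$ throughout the support ball. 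Inserting this into the previous display gives
\[
|u - \phi_{\epsilon h} \ast u|^2 \leq \delta^2 h^{2s} \int_{|z| \leq \epsilon h} \frac{g(z)}{|z|^{n+2s}}\, \rmd z .
\]
The gain of the small factor $\delta$ is thus produced entirely by this weight comparison, with the binding constraint occurring at the boundary $|z|=\epsilon h$ of the support.

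Finally I would recognize the right-hand integral as a piece of the Slobodetskij seminorm. Unfolding $g$ by Fubini,
\[
\int_{|z| \leq \epsilon h} \frac{g(z)}{|z|^{n+2s}}\, \rmd z = \int_S \int_{|z| \leq \epsilon h} \frac{|u(x)-u(x-z)|^2}{|z|^{n+2s}}\, \rmd z\, \rmd x \leq \lfloor u \rfloor_s^2 ,
\]
whence $|u - \phi_{\epsilon h} \ast u| \leq \delta h^s \lfloor u \rfloor_s$. The only delicate point is this last identification: one must check that restricting to short increments $|z| \leq \epsilon h$ yields a quantity dominated by $\lfloor u \rfloor_s^2$. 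On the flat torus this is immediate once $\epsilon$ is small enough that $\epsilon h$ stays below the injectivity radius, for then the quotient distance between $x$ and $x-z$ coincides with $|z|$ and the displayed local contribution is genuinely a part of the full double integral defining the seminorm. Everything being uniform in $h$ and in $u \in X_h$, the proof is complete.
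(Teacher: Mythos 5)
Your proof is correct, but it takes a genuinely different route from the paper. The paper argues by localization and scaling: it establishes $|u - \phi_{\epsilon}\ast u|_{T} \leq \delta \lfloor u \rfloor_{s,\calM(T)}$ on a reference cell of unit diameter, rescales to size $h$ (which is where the factor $h^s$ appears), squares, sums over cells, and finally bounds the sum of local seminorms by the global one using the finite overlap of the macroelements. You instead work globally: the identity $u - \phi_{\epsilon h}\ast u = \int \phi_{\epsilon h}(z)(u - \tau_z u)\,\rmd z$, Jensen and Fubini, and then the pointwise weight comparison $\phi_{\epsilon h}(z) \leq \|\phi\|_\infty\,\epsilon^{2s} h^{2s}\,|z|^{-n-2s}$ on the support ball, which produces both the factor $h^{2s}$ and the small constant $\epsilon^{2s}$ in one stroke. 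Your argument is more elementary and more general: it never uses the mesh, quasi-uniformity, or membership of $u$ in $X_h$ (only finiteness of $\lfloor u\rfloor_s$, which for $s<1/2$ holds for the piecewise-smooth elements of $X_h$), it yields a fully explicit constant, and it sidesteps the overlap-counting step that the paper leaves implicit. What the paper's scaling approach buys in exchange is robustness: it extends to smoothing operators that are not exact convolutions (for instance the space-dependent smoothers needed near boundaries and on non-quasi-uniform meshes), where no clean global kernel comparison is available. Your handling of the two delicate points --- that the restriction to increments $|z|\leq \epsilon h$ is dominated by the full seminorm once $\epsilon h$ is below the injectivity radius of the torus, and that $\epsilon$ is chosen depending only on $\delta$, $s$ and $\|\phi\|_\infty$, not on $h$ --- is exactly right.
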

\begin{proof}
On a cell $T$ of diameter $1$, with a corresponding macroelement $\calM(T)$, we may get an estimate:
\begin{equation}
|u - \phi_{\epsilon h} \ast u |_{T} \leq \delta \lfloor u \rfloor_{s, \calM (T)} .
\end{equation}
Then we scale to size $h$, square and sum over $T$. Then we bound the sum of semi-norms squared on the right hand side, by the semi-norm squared on the whole domain.
\end{proof}

The following is an inverse inequality for the Slobodetskij semi-norms.
\begin{proposition} \label{prop:slobinv}
For $0<s<s' <1/2$, we have $h$-uniform estimates for $u \in X_h$:
\begin{equation}
\lfloor u \rfloor_{s'} \cleq h^{s-s'} \lfloor u \rfloor_{s} .
\end{equation}
\end{proposition}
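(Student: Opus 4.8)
The plan is to prove the estimate at the level of the squared seminorms, i.e. to establish $\lfloor u\rfloor_{s'}^2 \cleq h^{2(s-s')}\lfloor u\rfloor_s^2$, by splitting the double integral defining $\lfloor u\rfloor_{s'}^2$ according to whether the two points are far apart or close together. Writing
\begin{equation}
\lfloor u \rfloor_{s'}^2 = \iint_{|x-y|>h} \frac{|u(x)-u(y)|^2}{|x-y|^{n+2s'}}\rmd x \,\rmd y + \iint_{|x-y|\le h} \frac{|u(x)-u(y)|^2}{|x-y|^{n+2s'}}\rmd x\, \rmd y,
\end{equation}
the far part is immediate: on the region $|x-y|>h$ one has $|x-y|^{-2(s'-s)} \le h^{-2(s'-s)}$, so the $s'$-integrand is bounded by $h^{2(s-s')}$ times the $s$-integrand, and enlarging the domain back to all of $S\times S$ gives $\cleq h^{2(s-s')}\lfloor u\rfloor_s^2$. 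Here the hypothesis $s'<1/2$ plays no role; it is the near-diagonal part that forces us to use the finite element structure.

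For the near part I would localize to macroelements. For each cell $T\in\calT_h$ let $\calM(T)$ be a macroelement containing $T$, chosen large enough that the $h$-neighborhood of $T$ lies in $\calM(T)$, which is possible uniformly by quasi-uniformity. Since any pair $(x,y)$ with $x\in T$ and $|x-y|\le h$ has $y\in\calM(T)$, discarding the constraint $|x-y|\le h$ and summing over cells gives
\begin{equation}
\iint_{|x-y|\le h} \frac{|u(x)-u(y)|^2}{|x-y|^{n+2s'}}\rmd x \,\rmd y \le \sum_{T} \lfloor u\rfloor_{s',\calM(T)}^2.
\end{equation}
On each $\calM(T)$ I would rescale to a reference macroelement $\hat\calM$ of diameter $1$ by $x=h\hat x$; under this reparametrization of the (constant-coefficient) form components one finds $\lfloor u\rfloor_{s',\calM(T)}^2 = h^{n-2s'}\lfloor\hat u\rfloor_{s',\hat\calM}^2$, and likewise for $s$. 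Granting the reference-level inverse estimate $\lfloor\hat u\rfloor_{s',\hat\calM}\cleq \lfloor\hat u\rfloor_{s,\hat\calM}$, scaling back produces the factor $h^{(n-2s')-(n-2s)}=h^{2s-2s'}$, and one reassembles the macroelements: since each point pair lies in only a bounded number of macroelements (finite overlap, again from quasi-uniformity), $\sum_T\lfloor u\rfloor_{s,\calM(T)}^2\cleq\lfloor u\rfloor_s^2$, which finishes the near part and hence the proposition.

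The main obstacle is exactly the reference-level estimate $\lfloor\hat u\rfloor_{s',\hat\calM}\cleq\lfloor\hat u\rfloor_{s,\hat\calM}$. Here I would use that the restriction of $X_h$ to $\hat\calM$ is a fixed finite-dimensional space of piecewise smooth forms, on which both Slobodetskij seminorms are finite \emph{precisely because} $s,s'<1/2$, so that the interface jumps of finite element forms are harmless, and on which each seminorm vanishes exactly on the constants. Passing to the quotient by constants, both become genuine norms on a finite-dimensional space, hence equivalent, which yields the inequality with a constant depending only on $\hat\calM$. To make this constant $h$-uniform I would invoke a compactness argument over the family of reference macroelements: there are finitely many combinatorial types and, within each, a compact set of admissible shapes, with the equivalence constant depending continuously on the shape, hence uniformly bounded (this is the kind of uniformity over compact families of reference macro-elements discussed in \cite{Heu14}). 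Everything else, namely the scaling identities and the finite-overlap summation, is routine.
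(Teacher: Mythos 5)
Your proof is correct and follows essentially the same route as the paper's: the paper also splits the squared seminorm into contributions from nearby points (pairs of touching cells, handled by scaling an inverse estimate from reference configurations) and well-separated points (where $h \cleq |x-y|$ gives the factor $h^{2(s-s')}$ pointwise). Your only additions are cosmetic (distance-based rather than cell-adjacency-based splitting, macroelements rather than unions of two touching cells) together with a useful spelling-out of the reference-level inverse estimate via finite-dimensional norm equivalence modulo constants, which the paper leaves implicit in the phrase ``scaling from an estimate on reference elements.''
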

\begin{proof}
We square the two sides of the inequality to be proved, and write the semi-norm over $S$ as a sum indexed by pairs of cells $T$ and $T'$ in $\calT_h$. That is:
\begin{equation}
\lfloor u \rfloor_{s}^2 = \sum_{T, T'} \iint_{T \times T'} \frac{|u(x) - u(y)|^2}{| x - y |^{n+ 2s } }\rmd x \rmd y.
\end{equation}
We distinguish two kinds of configurations for $T, T' \in \calT_h$:

(i) If $T \cap T' \neq \emptyset$, we use the Slobodetskij semi-norm on $T \cup T'$. We use scaling from an estimate on reference elements, to get:
\begin{equation}
\iint_{T \times T'} \frac{|u(x) - u(y)|^2}{| x - y |^{n+ 2s'} }\rmd x \rmd y \cleq h^{2(s-s')} \lfloor u \rfloor_{s, T \cup T'}^2.
\end{equation}

(ii) If $T  \cap T' = \emptyset$, we have, for $x \in T$ and $y \in T'$, an estimate $h \cleq |x-y|$, which yields:
\begin{equation}
\frac{|u(x) - u(y)|^2}{| x - y |^{n+ 2s'} } \cleq  h^{2(s-s')}\frac{|u(x) - u(y)|^2}{| x - y |^{n+ 2s} }.
\end{equation}
We integrate this inequality for $(x,y) \in T \times T'$.

Finally we sum the inequalities obtained in (i) and (ii), and notice that the right hand side is dominated by $h^{2(s-s')} \lfloor u \rfloor_{s}^2$.  
\end{proof}

The following result will be used repeatedly. It shows in particular that elements of $X_h$ are close in $\rmH^s(S)$ to smooth functions. It enables one to transfer arguments valid for smooth fields, to $X_h$.
\begin{proposition} \label{prop:slobstab} For $0< s < 1/2$ and $\delta >0$,  fixing $\epsilon$ small enough, we may obtain, for all $u \in X_h$:
\begin{equation}
\lfloor u - \phi_{\epsilon h} \ast u \rfloor_s \leq \delta \lfloor u \rfloor_s .
\end{equation}
\end{proposition}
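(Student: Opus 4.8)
The plan is to follow the reference-element-and-scaling strategy used for Proposition \ref{prop:deleps}, with one essential modification: since the Slobodetskij semi-norm is non-local, the double integral defining $\lfloor u - \phi_{\epsilon h}\ast u \rfloor_s^2$ cannot simply be localized cell by cell, so I would first split it into a short-range and a long-range part. Writing $w = u - \phi_{\epsilon h}\ast u$ and fixing a radius parameter $R$, I split, for each $x$, the inner integral in $y$ according to $|x-y|\leq Rh$ and $|x-y|>Rh$:
\begin{equation}
\lfloor w \rfloor_s^2 = \iint_{|x-y|\leq Rh}\frac{|w(x)-w(y)|^2}{|x-y|^{n+2s}}\rmd x\,\rmd y + \iint_{|x-y|>Rh}\frac{|w(x)-w(y)|^2}{|x-y|^{n+2s}}\rmd x\,\rmd y.
\end{equation}

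For the long-range part I would use $|w(x)-w(y)|^2 \leq 2|w(x)|^2 + 2|w(y)|^2$ together with symmetry to reduce to $\int |w(x)|^2\big(\int_{|x-y|>Rh}|x-y|^{-n-2s}\rmd y\big)\rmd x$. The inner weight integrates to $\cleq h^{-2s}$, so this part is $\cleq h^{-2s}|w|^2$. Here the already-established $\rmL^2$ estimate of Proposition \ref{prop:deleps}, namely $|w|\leq \delta h^s\lfloor u\rfloor_s$, gives exactly $h^{-2s}|w|^2 \cleq \delta^2\lfloor u\rfloor_s^2$, which is as small as desired once $\epsilon$ is small. Thus the long-range contribution is a routine consequence of the preceding proposition.

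For the short-range part I would localize: for $x$ in a cell $T$, the constraint $|x-y|\leq Rh$ confines $y$ to a patch of cells of fixed combinatorial size, namely a macroelement $\calM(T)$, so this contribution is dominated by $\sum_T \lfloor w\rfloor_{s,\calM(T)}^2$. On a reference patch of diameter $1$ I claim the estimate $\lfloor w\rfloor_{s,\calM_{ref}} \leq C(\epsilon)\lfloor u\rfloor_{s,\calM_{ref}}$ with $C(\epsilon)\to 0$ as $\epsilon\to 0$. This is a finite-dimensional compactness statement: on the finite-dimensional space of finite element functions modulo constants the semi-norm $\lfloor\cdot\rfloor_s$ is a genuine norm, the map $u\mapsto w = u - \phi_\epsilon\ast u$ annihilates constants, and $\phi_\epsilon\ast\to\id$ in $\rmH^s$ pointwise on this space, hence uniformly on its (compact) unit sphere. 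Crucially, $\lfloor\cdot\rfloor_s$ is finite on finite element functions precisely because $s<1/2$ permits jumps across codimension-one interfaces. Scaling this reference inequality up to size $h$ multiplies both semi-norms by the same factor $h^{n-2s}$, so the constant $C(\epsilon)$ is preserved, and summing over the boundedly-overlapping patches yields a short-range contribution $\cleq C(\epsilon)^2\lfloor u\rfloor_s^2$.

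Combining the two parts gives $\lfloor w\rfloor_s^2 \cleq (\delta^2 + C(\epsilon)^2)\lfloor u\rfloor_s^2$, and since both terms tend to $0$ as $\epsilon\to 0$, choosing $\epsilon$ small enough yields the claim. I expect the short-range part to be the main obstacle: one must show the localized smoothing error carries a constant that genuinely tends to $0$ with $\epsilon$, which is where the finite-dimensionality of the reference finite element space, the use of $s<1/2$, and the uniformity of constants over the finitely many reference patch types (cf. the remarks on \cite{Heu14}) are all essential.
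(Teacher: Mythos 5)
Your argument is correct, but it takes a genuinely different route from the paper. The paper's proof is a three-line consequence of results already on the table: it applies the interpolation inequality $\lfloor v \rfloor_s \cleq |v|^{\mu/(s+\mu)} \lfloor v \rfloor_{s+\mu}^{s/(s+\mu)}$ to $v = u - \phi_{\epsilon h}\ast u$ (with $s+\mu<1/2$), bounds the $\rmL^2$ factor by Proposition \ref{prop:deleps} (which carries the small constant $\delta$) and the $\rmH^{s+\mu}$ factor by the inverse inequality of Proposition \ref{prop:slobinv}; the powers of $h$, namely $(h^{s})^{\mu/(s+\mu)}(h^{-\mu})^{s/(s+\mu)}$, cancel exactly. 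You instead split the double integral into long-range and short-range parts: your long-range estimate is essentially the computation the paper only records later as Lemma \ref{lem:sobdom}, combined with Proposition \ref{prop:deleps}, and your short-range estimate is a fresh finite-dimensional compactness argument on reference macroelements. Both work; the paper's version is shorter and pushes all the reference-element work into the single inverse inequality \ref{prop:slobinv}, while yours is more self-contained (no interpolation inequality, no Proposition \ref{prop:slobinv}) at the price of redoing the localization of the Slobodetskij seminorm and of a compactness argument over the compact family of reference patches, whose uniformity must be justified as in the cited remarks on \cite{Heu14}. One small bookkeeping point in your short-range step: since $\phi_{\epsilon h}\ast u$ at points of $\calM(T)$ near its boundary samples $u$ outside $\calM(T)$, the reference inequality should read $\lfloor u - \phi_{\epsilon}\ast u\rfloor_{s,\calM_{ref}} \leq C(\epsilon) \lfloor u\rfloor_{s,\calM^2_{ref}}$ with an enlarged patch on the right; this is harmless after summing, since the enlarged patches still have bounded overlap.
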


\begin{proof}
Pick $\mu >0$ such that $s + \mu < 1/2$. Use the interpolation inequality to write, for $u \in X_h$:
\begin{align}
\lfloor u - \phi_{\epsilon h} \ast u \rfloor_s  & \cleq  | u - \phi_{\epsilon h} \ast u |^{\frac{\mu}{s + \mu}} \ \lfloor u - \phi_{\epsilon h} \ast u \rfloor^{\frac{s}{s + \mu}}_{s+\mu},\\
& \cleq  | u - \phi_{\epsilon h} \ast u |^{\frac{\mu}{s + \mu}} \ \lfloor u \rfloor^{\frac{s}{s + \mu}}_{s+\mu},
\end{align}
using uniform boundedness of convolution by in $\phi_{\epsilon h}$ in Sobolev spaces.

We apply Proposition \ref{prop:deleps} to the first term and  Proposition \ref{prop:slobinv} to the second term.  We get:
\begin{align}
\lfloor u - \phi_{\epsilon h} \ast u \rfloor_s & \cleq  (h^s \lfloor u \rfloor_s)^{\frac{\mu}{s + \mu}}  (h^{-\mu} \lfloor u \rfloor_s)^{\frac{s}{s + \mu}}, \\
& \cleq \lfloor u \rfloor_s.
\end{align}
In this estimate, the constant may be rendered arbitrarily small by picking $\epsilon$ small enough, due to Proposition \ref{prop:deleps}.
\end{proof}
We have a second inverse inequality for Slobodetskij semi-norms.
\begin{proposition} \label{prop:slobl2}  For $0< s < 1/2$ we have an estimate, for all $u \in X_h$:
\begin{equation}
\lfloor u \rfloor_s \cleq h^{-s}  | u |.
\end{equation}
\end{proposition}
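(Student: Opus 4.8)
The plan is to reuse, almost verbatim, the cell-pair splitting from the proof of Proposition \ref{prop:slobinv}. First I would square the asserted inequality and decompose the Slobodetskij seminorm over $S\times S$ into a sum indexed by pairs of cells,
\begin{equation}
\lfloor u \rfloor_s^2 = \sum_{T, T'} \iint_{T\times T'} \frac{|u(x)-u(y)|^2}{|x-y|^{n+2s}}\,\rmd x\,\rmd y,
\end{equation}
and then bound the near-diagonal and far-away contributions separately, exactly as in the two cases (i) and (ii) there.

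For pairs with $T\cap T'\neq\emptyset$ I would scale to a reference configuration. On a fixed reference macroelement $\calM(T)$ of diameter $1$ the finite element space is finite-dimensional, so all norms on it are equivalent; in particular the order-$s$ Slobodetskij seminorm is dominated by the $\rmL^2$ norm. This is exactly where $s<1/2$ is indispensable: finite element differential forms need not be continuous across interfaces, and the order-$s$ seminorm of a piecewise polynomial with jumps across a codimension-one interface is finite precisely when $s<1/2$. Scaling this reference estimate back to size $h$ produces the factor $h^{-2s}$, giving
\begin{equation}
\iint_{T\times T'} \frac{|u(x)-u(y)|^2}{|x-y|^{n+2s}}\,\rmd x\,\rmd y \cleq h^{-2s}\,|u|_{T\cup T'}^2 ,
\end{equation}
and summing over the uniformly bounded number of neighbouring pairs (by quasi-uniformity) yields a total contribution $\cleq h^{-2s}|u|^2$. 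Uniformity of the implied constant over the compact family of reference configurations is ensured as in \cite{Heu14}.

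For pairs with $T\cap T'=\emptyset$ I would simply discard the cancellation, using $|u(x)-u(y)|^2\leq 2|u(x)|^2+2|u(y)|^2$. As in the proof of Proposition \ref{prop:slobinv}, disjoint cells satisfy $h\cleq |x-y|$, so for fixed $x\in T$ the union of all cells disjoint from $T$ lies in $\{\,y : |x-y|\geq ch\,\}$, and
\begin{equation}
\int_{|x-y|\geq ch} \frac{\rmd y}{|x-y|^{n+2s}} \cleq \int_{ch}^{\infty} r^{-1-2s}\,\rmd r \cleq h^{-2s}.
\end{equation}
Integrating the $|u(x)|^2$ term against this bound, and the $|u(y)|^2$ term symmetrically, gives a contribution $\cleq h^{-2s}|u|^2$ for the entire far-away part. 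Adding the two contributions and taking square roots then yields $\lfloor u\rfloor_s\cleq h^{-s}|u|$.

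The only genuinely delicate step, and the one I would treat most carefully, is the near-diagonal estimate: it rests on the finiteness of the reference seminorm on the (possibly discontinuous) finite element space, valid exactly because $s<1/2$, together with the uniformity of the constant across reference macroelements. Everything else is the same scaling-and-summation bookkeeping already carried out in Proposition \ref{prop:slobinv}.
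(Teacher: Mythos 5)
Your argument is correct, but it is not the route the paper takes. The paper's proof is a three-line chain: replace $u$ by its mollification $\phi_{\epsilon h}\ast u$ (Proposition \ref{prop:slobstab}), apply the continuous interpolation inequality $\lfloor w\rfloor_s\cleq |w|^{1-s}|\nabla w|^s$ to the now-smooth field, and finish with the inverse estimate $|\nabla(\phi_{\epsilon h}\ast u)|\cleq h^{-1}|u|$ and the $\rmL^2$ norm equivalence of Proposition \ref{prop:l2stab}. You instead work directly on the piecewise-polynomial $u$, decomposing the double integral over cell pairs exactly as in Proposition \ref{prop:slobinv}: for touching pairs you invoke equivalence of norms on the finite-dimensional reference configuration (where $s<1/2$ guarantees finiteness of the seminorm across the inter-element jumps) and scale, picking up $h^{-2s}$; for disjoint pairs you drop the cancellation via $|u(x)-u(y)|^2\leq 2|u(x)|^2+2|u(y)|^2$ and integrate $|x-y|^{-n-2s}$ over $\{|x-y|\geq ch\}$. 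Both contributions are correctly bounded by $h^{-2s}|u|^2$, and the uniformity of the reference constants over a quasi-uniform family is exactly the point the paper delegates to \cite{Heu14}. What your approach buys is independence from the mollifier machinery (Propositions \ref{prop:deleps}--\ref{prop:slobstab}) and a transparent localization of where $s<1/2$ is used; what the paper's approach buys is brevity, since the mollification toolbox is already in place and reused throughout Section \ref{sec:frac}, and it sidesteps any discussion of reference configurations for pairs of cells in a general cellular complex.
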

\begin{proof}
We write, using Proposition \ref{prop:slobstab}, an interpolation inequality, an inverse inequality and Proposition \ref{prop:l2stab}:
\begin{align}
\lfloor u \rfloor_s & \cleq \lfloor \phi_{\epsilon h} \ast u \rfloor_{s}, \\
 & \cleq | \phi_{\epsilon h} \ast u |^{1-s} | \nabla (\phi_{\epsilon h} \ast u) |^{s}, \\
 & \cleq | \phi_{\epsilon h} \ast u |^{1-s} \,  h^{-s}  |u|^{s}, \\
 & \cleq h^{-s}  | u |,
\end{align}
as announced.
\end{proof}
Here is a third type of inverse inequality.
\begin{proposition}\label{prop:invds}
 For $0< s < 1/2$ , we have an inverse inequality, for $u\in X_h$:
\begin{align}
| \rmd u |  & \cleq  h^{s-1}  \lfloor u \rfloor_s.
\end{align}
\end{proposition}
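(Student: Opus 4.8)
The plan is to reduce the estimate to a standard bound on the gradient of a mollification, using that $\rmd$ is a first-order constant-coefficient operator on the flat torus. First I would observe that $\rmd u \in X_h$ whenever $u \in X_h$, so that Proposition \ref{prop:l2stab}(i) applied to $\rmd u$, together with the commutation $\phi_{\epsilon h} \ast \rmd u = \rmd(\phi_{\epsilon h} \ast u)$, gives
\[
|\rmd u| \cleq |\phi_{\epsilon h} \ast \rmd u| = |\rmd(\phi_{\epsilon h} \ast u)| \cleq |\nabla(\phi_{\epsilon h} \ast u)|,
\]
the last step because $\rmd = \grad \oplus \curl \oplus -\div$ is pointwise dominated by the full gradient. (Alternatively one bounds $|\rmd u|$ by the broken $\rmH^1$ seminorm $\lceil u \rceil_h$ and invokes Proposition \ref{prop:oneequiv}.) It therefore suffices to prove $|\nabla(\phi_{\epsilon h} \ast u)| \cleq h^{s-1} \lfloor u \rfloor_s$, and this I would establish directly, for any function with finite Slobodetskij seminorm.

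The core estimate exploits that $\int \nabla \phi_{\epsilon h} = 0$. Writing $\eta = \epsilon h$, so that $\nabla \phi_\eta$ is supported in $\{|z| \leq \eta\}$ with $|\nabla \phi_\eta| \cleq \eta^{-n-1}$, I would subtract the constant $u(x)$ to get
\[
\nabla(\phi_\eta \ast u)(x) = \int \nabla \phi_\eta(x - y)\, (u(y) - u(x))\, \rmd y.
\]
Bounding the kernel and applying Cauchy--Schwarz over the ball of radius $\eta$ (of volume $\cleq \eta^n$) gives the pointwise estimate $|\nabla(\phi_\eta \ast u)(x)|^2 \cleq \eta^{-n-2} \int_{|x-y| \leq \eta} |u(y) - u(x)|^2\, \rmd y$. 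Then, using $|x - y|^{n+2s} \leq \eta^{n+2s}$ on the domain of integration, I would insert this denominator to recognise the Slobodetskij integrand, obtaining
\[
|\nabla(\phi_\eta \ast u)(x)|^2 \cleq \eta^{2s-2} \int_{|x-y| \leq \eta} \frac{|u(y)-u(x)|^2}{|x-y|^{n+2s}}\, \rmd y .
\]

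Integrating this in $x$ over $S$ and extending the inner integration to all of $S$ yields
\[
|\nabla(\phi_\eta \ast u)|^2 \cleq \eta^{2s-2} \lfloor u \rfloor_s^2 \cleq h^{2s-2} \lfloor u \rfloor_s^2,
\]
with the fixed factor $\epsilon^{2s-2}$ absorbed into the implicit constant; taking square roots and combining with the first paragraph finishes the proof. I do not expect a genuine obstacle here: the only points requiring care are the correct powers of $\eta$ in the Cauchy--Schwarz step and the verification that convolution commutes with $\rmd$ and is norm-stable on $X_h$, both of which are supplied by the earlier results. Note that the restriction $s < 1/2$ plays no role in the gradient estimate itself --- it only guarantees, through the ambient setting of this section, that $\lfloor u \rfloor_s$ is finite for $u \in X_h$.
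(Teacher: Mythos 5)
Your proof is correct, but it takes a genuinely different route from the paper's. The paper disposes of this proposition in one line, ``by scaling from a reference element'': on a reference cell both $|\rmd u|$ and $\lfloor u \rfloor_s$ vanish exactly on constants, so the inequality holds there by finite-dimensionality, and the factor $h^{s-1}$ appears when transporting $|\rmd \cdot|_{\rmL^2}$ (which scales like $h^{n/2-1}$) against the Slobodetskij seminorm (which scales like $h^{n/2-s}$), summing the squares over cells. Your argument instead routes everything through the mollifier: the reduction $|\rmd u| \cleq |\nabla(\phi_{\epsilon h}\ast u)|$ via Proposition \ref{prop:l2stab}(i) and commutation, and then the kernel estimate exploiting $\int \nabla\phi_\eta = 0$, which is carried out correctly (the powers $\eta^{-n-2}$ from Cauchy--Schwarz and $\eta^{n+2s}$ from inserting the denominator combine to the right exponent). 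What your approach buys is a genuinely stronger intermediate statement, $|\nabla(\phi_\eta \ast u)| \cleq \eta^{s-1}\lfloor u \rfloor_s$ valid for \emph{any} $u$ with finite seminorm, not just $u\in X_h$; combined with Proposition \ref{prop:oneequiv} it immediately yields the strengthened version $\lceil u \rceil_h \cleq h^{s-1}\lfloor u \rfloor_s$ stated right after, whereas the paper reproves that one by scaling as well. What it costs is dependence on the mollification machinery (choice of $\epsilon$, quasi-uniformity entering through Proposition \ref{prop:l2stab}) where the scaling proof needs only shape-regularity of the individual cells. Your closing remark is also accurate: the restriction $s<1/2$ is irrelevant to the gradient estimate and only ensures $\lfloor u\rfloor_s<\infty$ for the discontinuous elements of $X_h$.
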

\begin{proof} By scaling from a reference element.
\end{proof}


Here is a strengthening of Proposition \ref{prop:invds}, proved in the same way.
\begin{proposition}
For $0<s<1/2$ we have an inverse inequality, for $u\in X_h$:
\begin{align}
\lceil u \rceil_h  & \cleq  h^{s-1}  \lfloor u \rfloor_s.
\end{align}
\end{proposition}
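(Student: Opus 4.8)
The plan is to mimic the proof of Proposition \ref{prop:invds}, reducing to a reference configuration by scaling; the only additional ingredient is that the reference estimate must now also absorb the interface-jump contributions entering $\lceil \cdot \rceil_h$. Concretely, I would fix for each cell $T \in \calT_h$ a macroelement $\calM(T)$ consisting of the cells touching $T$, and pass to a reference patch $\hat{\calM}(T)$ of unit diameter. Since the meshes are quasi-uniform with cells built from a uniformly bounded number of simplices, there are only finitely many combinatorial types of such patches, so it suffices to argue on one of them with a constant that is afterwards uniform.

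On the reference patch the finite element forms constitute a finite-dimensional space $P$. On $P$ both the unscaled discrete seminorm $\lceil \cdot \rceil_1$ (broken $\rmH^1$ seminorm plus unit-weighted jumps) and the Slobodetskij seminorm $\lfloor \cdot \rfloor_{s,\hat{\calM}(T)}$ are continuous seminorms whose common kernel is exactly the constants. Here the hypothesis $s<1/2$ is essential: it guarantees that $\lfloor \cdot \rfloor_{s,\hat{\calM}(T)}$ is finite on forms that are merely piecewise smooth with jumps across interfaces, hence that it is a genuine norm on $P$ modulo constants. Equivalence of norms on a finite-dimensional space then yields the reference estimate $\lceil \hat u \rceil_1 \cleq \lfloor \hat u \rfloor_{s,\hat{\calM}(T)}$, the right-hand side controlling the jumps precisely because its double integral runs across the interfaces separating adjacent cells.

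Next I would scale back to cell size $h$ and track the powers of $h$ term by term. Under the dilation from the reference patch to $\calM(T)$, the bulk term $\sum |\nabla u|^2_T$ and the weighted jump term $\sum h_T^{-1}|\ljump u \rjump|^2_{T'}$ both pick up the factor $h^{n-2}$ -- for the jump term because the codimension-one measure scales as $h^{n-1}$ against the weight $h_T^{-1}$ -- so that $\lceil u \rceil_h^2 \ceq h^{n-2}\lceil \hat u\rceil_1^2$ on each patch, while $\lfloor u\rfloor_{s,\calM(T)}^2 \ceq h^{n-2s}\lfloor \hat u\rfloor_{s,\hat{\calM}(T)}^2$. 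Combining with the reference estimate produces the factor $h^{(n-2)-(n-2s)} = h^{2(s-1)}$ on each patch. Finally I would square, sum over cells and interfaces, use $\lfloor u\rfloor_{s,\calM(T)} \leq \lfloor u\rfloor_{s,S}$ together with the uniformly bounded overlap of the macroelements to absorb multiplicities, and conclude $\lceil u\rceil_h \cleq h^{s-1}\lfloor u\rfloor_s$.

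The main obstacle is the reference estimate of the second paragraph, specifically the claim that $\lfloor \cdot \rfloor_s$ dominates the interface jumps. This is exactly where $s<1/2$ is used, and it is worth verifying explicitly that a form with a genuine jump across a codimension-one interface still has finite Slobodetskij seminorm on the patch (so the finite-dimensional norm-equivalence applies), while a vanishing seminorm forces the form to be globally constant. The remaining steps -- the scaling bookkeeping and the summation with bounded overlap -- are routine, exactly as in Propositions \ref{prop:slobinv} and \ref{prop:invds}.
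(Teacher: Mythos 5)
Your proposal is correct and is essentially the paper's own argument: the paper dismisses this proposition with ``proved in the same way'' as Proposition \ref{prop:invds}, i.e.\ by scaling from a reference configuration, which is exactly the macroelement reduction, finite-dimensional seminorm equivalence modulo constants (using $s<1/2$ to make $\lfloor\cdot\rfloor_s$ finite across jumps), and $h$-power bookkeeping that you spell out. The only cosmetic caveat is that for general quasi-uniform cellular complexes one has a compact family of reference patches rather than finitely many combinatorial types, but uniformity of the constants over such a family is standard (cf.\ the paper's reference to \cite{Heu14}).
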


\begin{proposition} \label{prop:discint} 
For $0<  s < 1/2$, we have a discrete interpolation estimate of the form, for $u \in X_h$:
\begin{equation}
\lfloor u  \rfloor_s \cleq |u|^{1-s} \lceil u \rceil^{s}_h.
\end{equation}
\end{proposition}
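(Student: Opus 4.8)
The plan is to reduce the estimate to the corresponding continuous interpolation inequality applied to the mollified field $\phi_{\epsilon h} \ast u$, which is smooth, and then to translate both factors back to discrete quantities using the norm equivalences established above. This mirrors the proof of Proposition \ref{prop:slobl2}; the only essential change is that the discrete $\rmH^1$ seminorm replaces the cruder inverse bound $|\nabla(\phi_{\epsilon h} \ast u)| \cleq h^{-1}|u|$ used there.

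First I would fix $\epsilon$ small enough that Proposition \ref{prop:slobstab} gives $\lfloor u - \phi_{\epsilon h} \ast u \rfloor_s \leq \delta \lfloor u \rfloor_s$ with $\delta < 1$. By the triangle inequality this yields $\lfloor u \rfloor_s \cleq \lfloor \phi_{\epsilon h} \ast u \rfloor_s$, the difference term being absorbed into the left-hand side. Next, since $\phi_{\epsilon h} \ast u$ belongs to $\rmH^1\alter^\bs(S)$ (indeed it is smooth), I would invoke the standard interpolation inequality between $\rmL^2(S)$ and $\rmH^1(S)$ on the fixed torus $S$:
\begin{equation}
\lfloor \phi_{\epsilon h} \ast u \rfloor_s \cleq | \phi_{\epsilon h} \ast u |^{1-s} \, | \nabla (\phi_{\epsilon h} \ast u) |^{s}.
\end{equation}
Because $S$ is fixed, the constant here is independent of $h$.

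Finally I would convert the two factors into discrete quantities: Proposition \ref{prop:l2stab} gives $| \phi_{\epsilon h} \ast u | \ceq |u|$, while Proposition \ref{prop:oneequiv} gives $| \nabla (\phi_{\epsilon h} \ast u) | \ceq \lceil u \rceil_h$. Substituting these into the chain above produces $\lfloor u \rfloor_s \cleq |u|^{1-s} \lceil u \rceil_h^s$, as claimed.

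There is no serious obstacle here, since every ingredient has already been prepared; this proposition is essentially a rerun of Proposition \ref{prop:slobl2} with a sharper second factor. The only point requiring a moment's care is the first step, where one must verify that the multiplicative error from Proposition \ref{prop:slobstab} can be driven strictly below one so as to be absorbed on the left — which is exactly what choosing $\epsilon$ small guarantees. It is precisely the appeal to Proposition \ref{prop:oneequiv}, in place of a plain inverse estimate, that keeps the right-hand side expressed in terms of the discrete $\rmH^1$ seminorm $\lceil u \rceil_h$ rather than a scale factor $h^{-s}$.
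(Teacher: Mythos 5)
Your proof is correct and follows exactly the same route as the paper: Proposition \ref{prop:slobstab} to pass to the mollified field, the continuous interpolation inequality, then Propositions \ref{prop:l2stab} and \ref{prop:oneequiv} to convert the two factors back to $|u|$ and $\lceil u \rceil_h$. The only difference is that you spell out the absorption argument in the first step, which the paper leaves implicit.
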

\begin{proof}
We write, using Proposition \ref{prop:slobstab}, an interpolation inequality, Proposition \ref{prop:l2stab} and Proposition \ref{prop:oneequiv}:
\begin{align}
\lfloor u \rfloor_s & \cleq  \lfloor  \phi_{\epsilon h} \ast u \rfloor_s, \\ 
&\cleq  | \phi_{\epsilon h} \ast u |^{1-s} | \nabla (\phi_{\epsilon h} \ast u) |^{s}, \\
& \cleq | u |^{1-s} \lceil u \rceil_h^{s},
\end{align}
as announced.
\end{proof}

We deduce:
\begin{proposition}\label{prop:spstabs} 
The mollified interpolator (smoothed projector) $\Pi_h$ is stable in $\rmH^{s'}(S) \to \rmH^s(S)$, for $0<s< s'< 1/2$.
\end{proposition}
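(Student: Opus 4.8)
The plan is to prove the equivalent bound $\|\Pi_h u\|_{\rmH^s} \cleq \|u\|_{\rmH^{s'}}$. The $\rmL^2$ component is immediate from the $\rmL^2$-stability of $\Pi_h$ (Proposition \ref{prop:l2stab}), since $\rmH^{s'}(S) \hookrightarrow \rmL^2(S)$; so everything reduces to controlling the Slobodetskij seminorm $\lfloor \Pi_h u \rfloor_s$. The key device is to compare $\Pi_h u$ with the auxiliary finite element field $Q_h u := I_h(\phi_{\epsilon h} \ast u)$, the building block of the mollified interpolator. Both $\Pi_h u$ and $Q_h u$ lie in $X_h$, so their difference is a finite element field to which the inverse inequality of Proposition \ref{prop:slobl2} applies. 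I would therefore write
\begin{equation}
\lfloor \Pi_h u \rfloor_s \leq \lfloor \Pi_h u - Q_h u \rfloor_s + \lfloor Q_h u \rfloor_s
\end{equation}
and treat the two terms separately.

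For the first term, since $\Pi_h u - Q_h u \in X_h$, Proposition \ref{prop:slobl2} gives $\lfloor \Pi_h u - Q_h u \rfloor_s \cleq h^{-s} | \Pi_h u - Q_h u |$. Both $\Pi_h u$ and $Q_h u$ approximate $u$ in $\rmL^2$ to fractional order $s'$, that is $| u - \Pi_h u | \cleq h^{s'} \|u\|_{\rmH^{s'}}$ and likewise for $Q_h u$; these estimates follow by real interpolation of the operators $I-\Pi_h$ and $I-Q_h$ between their uniform $\rmL^2$-bounds and the optimal polynomial approximation orders established earlier, the target space $\rmL^2$ being held fixed. Hence $| \Pi_h u - Q_h u | \cleq h^{s'} \|u\|_{\rmH^{s'}}$ and
\begin{equation}
\lfloor \Pi_h u - Q_h u \rfloor_s \cleq h^{s'-s} \|u\|_{\rmH^{s'}} \cleq \|u\|_{\rmH^{s'}},
\end{equation}
the last step being uniform in $h$ precisely because $s < s'$.

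For the second term I would write $\lfloor Q_h u \rfloor_s \leq \lfloor Q_h u - u \rfloor_s + \lfloor u \rfloor_s$, bound $\lfloor u \rfloor_s \cleq \|u\|_{\rmH^{s'}}$ by the embedding $\rmH^{s'} \hookrightarrow \rmH^s$, and estimate the quasi-interpolation error by inserting $g := \phi_{\epsilon h} \ast u$:
\begin{equation}
\lfloor Q_h u - u \rfloor_s \leq \lfloor I_h g - g \rfloor_s + \lfloor g - u \rfloor_s.
\end{equation}
The mollification error satisfies $\lfloor g - u \rfloor_s \cleq \lfloor u \rfloor_s \cleq \|u\|_{\rmH^{s'}}$, since convolution by $\phi_{\epsilon h}$ is a uniform contraction of the Slobodetskij seminorm. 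For $\lfloor I_h g - g \rfloor_s$ I would exploit that $g$ is smooth: the classical fractional interpolation-error estimate $\lfloor I_h g - g \rfloor_s \cleq h^{m-s} \|g\|_{\rmH^m}$ applies for any integer $m > s$, and the regularity-raising property of the mollifier gives $\|g\|_{\rmH^m} = \|\phi_{\epsilon h}\ast u\|_{\rmH^m} \cleq h^{s'-m} \|u\|_{\rmH^{s'}}$; the net power is $h^{s'-s}$, uniformly bounded because $s < s'$ (and, reassuringly, independent of the chosen $m$).

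Combining the two terms yields $\lfloor \Pi_h u \rfloor_s \cleq \|u\|_{\rmH^{s'}}$ and hence the asserted stability. The main obstacle is the quasi-interpolation error $\lfloor Q_h u - u \rfloor_s$: below smoothness $1/2$ the raw interpolator $I_h$ is unbounded and the error $u - Q_h u$ is \emph{not} a finite element field, so neither the inverse inequality nor the discrete interpolation estimate of Proposition \ref{prop:discint} can be applied to it directly. The two mechanisms that resolve this are the smoothing $\phi_{\epsilon h}\ast$, which lifts $u$ to a field regular enough for the standard interpolation theory at the cost of a negative power of $h$, and the strict inequality $s<s'$, which guarantees that every traded power of $h$ comes out nonnegative.
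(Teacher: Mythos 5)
Your route is genuinely different from the paper's. The paper's proof is two lines: for $u \in \rmH^1(S)$ it applies the discrete interpolation inequality of Proposition \ref{prop:discint} to $\Pi_h u \in X_h$, namely $\lfloor \Pi_h u\rfloor_s\cleq|\Pi_h u|^{1-s}\lceil\Pi_h u\rceil_h^s$, then invokes the $\rmL^2$-stability of Proposition \ref{prop:l2stab} and the discrete $\rmH^1$-stability $\lceil\Pi_h u\rceil_h\cleq|\nabla u|$ of Proposition \ref{prop:smstabh1} to obtain $\lfloor\Pi_h u\rfloor_s\cleq|u|^{1-s}|\nabla u|^s$, and concludes by abstract operator interpolation (this is where $s<s'$ enters: $\rmH^{s'}$ embeds into the real interpolation space $(\rmL^2,\rmH^1)_{s,1}$). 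You instead work directly with $\rmH^{s'}$ data via an error splitting against $I_h(\phi_{\epsilon h}\ast u)$, trading the discrete $\rmH^1$ machinery for inverse estimates and approximation rates. Both strategies are legitimate; the paper's is shorter because Propositions \ref{prop:discint} and \ref{prop:smstabh1} already package the scaling work you redo by hand.

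There is, however, one step in your version that is false as stated: the claim that $\lfloor I_hg-g\rfloor_s\cleq h^{m-s}\|g\|_{\rmH^m}$ holds ``for any integer $m>s$''. For $m>p+1$ this asserts a convergence order exceeding the polynomial degree contained in $X_h$ (take $p=0$, $m=2$ and a fixed smooth $g$: the interpolation error is $O(h^{1-s})$, not $O(h^{2-s})$), and for small $m$ (e.g.\ $m=1$ in three dimensions) the canonical interpolator $I_h$ is not even bounded on $\rmH^m$, so no such estimate is available by scaling. Your conclusion nevertheless survives: what Bramble--Hilbert actually yields, with reproduction of polynomials of degree $p$ and $I_h$ bounded on $\rmH^m$ for $m$ large enough, is $\lfloor I_hg-g\rfloor_s\cleq\sum_{j=p+1}^{m}h^{j-s}|\nabla^jg|$, and each term is $\cleq h^{s'-s}\|u\|_{\rmH^{s'}}$ by the mollifier bound $|\nabla^j(\phi_{\epsilon h}\ast u)|\cleq h^{s'-j}\|u\|_{\rmH^{s'}}$ for $j\geq1$. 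A cleaner repair is to interpolate the operator $I-I_h(\phi_{\epsilon h}\ast\cdot)$ between the bound $h^{-s}$ on $\rmL^2\to\rmH^s$ (inverse inequality plus $\rmL^2$-stability) and the rate $h^{p+1-s}$ on $\rmH^{p+1}\to\rmH^s$ from Proposition \ref{prop:ratepi}; this also takes care of the fact that the global Slobodetskij seminorm is not subadditive over cells (Lemma \ref{lem:sobdom}), a point your appeal to the ``classical'' local estimate quietly skips.
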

\begin{proof}
We write, for $u \in \rmH^1(S)$, using Proposition \ref{prop:discint} and Propostion \ref{prop:smstabh1}:
\begin{align}
\lfloor \Pi_h u \rfloor_s & \cleq |\Pi_h u|^{1-s}  \lceil \Pi_h u \rceil^{s}_h, \\
& \cleq | u|^{1-s}  | \nabla u |^{s}. 
\end{align}
One concludes by interpolation theory (see Lemma 25.3 in \cite{Tar07}).
\end{proof}

\paragraph{Approximation orders.}

To obtain convergence estimates in $\rmH^s(S)$ we use:
\begin{lemma}\label{lem:sobdom}
We have:
\begin{equation}\label{eq:sobdom}
\lfloor u \rfloor_{s, S}^2 \cleq  \sum_{T\in \calT_h^n} \lfloor u \rfloor_{s, \calM(T)}^2 + \frac{1}{sh^{2s}} | u |_S^2,
\end{equation}
where we sum over $n$-cells $T$ in $\calT_h$, given that $\calM(T)$ denotes the macro-element surrounding $T$.
\end{lemma}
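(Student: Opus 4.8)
The plan is to split the double integral defining $\lfloor u \rfloor_{s,S}^2$ according to whether the two points are close together or far apart, at a threshold proportional to $h$. Fix a constant $c>0$, small enough (depending only on the shape-regularity and quasi-uniformity constants of the family $(\calT_h)$) that the $ch$-neighborhood of any $n$-cell $T \in \calT_h^n$ is contained in its surrounding macroelement $\calM(T)$; this is the same kind of geometric condition that already governs the choice of $\epsilon$ for the convolution. I would then write
\begin{equation}
\lfloor u \rfloor_{s,S}^2 = \iint_{|x-y| \leq ch} \frac{|u(x)-u(y)|^2}{|x-y|^{n+2s}} \rmd x \rmd y + \iint_{|x-y| > ch} \frac{|u(x)-u(y)|^2}{|x-y|^{n+2s}} \rmd x \rmd y,
\end{equation}
and estimate the two terms separately, the near part producing the macroelement seminorms and the far part producing the $\rmL^2$ term.

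For the near part, I would partition the $x$-integration over the $n$-cells $T$. For $x \in T$ and $|x-y| \leq ch$ the point $y$ lies in $\calM(T)$ by the choice of $c$, so, using $T \subseteq \calM(T)$,
\begin{align}
\iint_{|x-y| \leq ch} \frac{|u(x)-u(y)|^2}{|x-y|^{n+2s}} \rmd x \rmd y
& \leq \sum_{T \in \calT_h^n} \iint_{T \times \calM(T)} \frac{|u(x)-u(y)|^2}{|x-y|^{n+2s}} \rmd x \rmd y \\
& \leq \sum_{T \in \calT_h^n} \lfloor u \rfloor_{s, \calM(T)}^2.
\end{align}
Since each $x$ is counted in exactly one cell $T$, there is no double counting and this yields directly the first term on the right of (\ref{eq:sobdom}), with constant one.

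For the far part, I would discard the cancellation, bounding $|u(x)-u(y)|^2 \leq 2|u(x)|^2 + 2|u(y)|^2$, and symmetrize in $x$ and $y$ to get
\begin{equation}
\iint_{|x-y| > ch} \frac{|u(x)-u(y)|^2}{|x-y|^{n+2s}} \rmd x \rmd y \leq 4 \int_S |u(x)|^2 \Big( \int_{|x-y|>ch} \frac{\rmd y}{|x-y|^{n+2s}} \Big) \rmd x.
\end{equation}
Computing the inner integral in polar coordinates bounds it by $\omega_{n-1} \int_{ch}^\infty r^{-1-2s} \rmd r = \frac{\omega_{n-1}}{2s}(ch)^{-2s}$, where $\omega_{n-1}$ is the surface measure of the unit sphere in $\bbR^n$. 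Since $c^{-2s}$ stays bounded for $s \in \,]0,1/2[$ and $c$ fixed, the far part is $\cleq \frac{1}{sh^{2s}}|u|_S^2$, which is the second term of (\ref{eq:sobdom}); the factor $1/s$ is exactly the divergence of the radial integral $\int r^{-1-2s}\rmd r$ as $s \to 0$, and tracking it is the reason for keeping $1/s$ explicit in the statement. Adding the two contributions gives the claim.

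The main obstacle is the geometric covering underlying the near part: one must guarantee that a fixed multiple $c$ of $h$ is small enough that the $ch$-neighborhood of every $n$-cell lands inside its macroelement, uniformly in $h$ and across the family of meshes. This is precisely where quasi-uniformity and shape-regularity enter, and it is entirely analogous to the neighborhood condition already imposed on $\epsilon$ for the mollifier earlier in the paper. Everything else is a routine splitting of the integral together with the elementary radial computation above.
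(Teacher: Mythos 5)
Your proof is correct and follows essentially the same route as the paper: split the double integral at a threshold comparable to $h$ chosen so that the near pairs land in macroelements, and bound the far part by the $\rmL^2$ norm times the tail integral $\int_{r \geq ch} r^{-1-2s}\,\rmd r \cleq \tfrac{1}{s}(ch)^{-2s}$. The only (immaterial) difference is that you estimate the far part directly via $|u(x)-u(y)|^2 \leq 2|u(x)|^2+2|u(y)|^2$, whereas the paper phrases it through translations $\tau_z$ on $\bbR^n$ and then invokes extension operators for general domains; your version is, if anything, slightly more self-contained on the torus.
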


\begin{proof}
We first prove, with constants independent of $h$:
\begin{equation}\label{eq:sobloc}
\lfloor u \rfloor_{s,S}^2 \cleq  \iint_{\tiny 
\begin{array}{l}
(x,y) \in S \times S\\
|x - y | \leq h
\end{array} } 
\frac{|u(x) - u(y)|^2}{| x - y |^{n + 2s } }\rmd x \rmd y
+\frac{1}{s h^{2s}} \| u\|^2_{\rmL^2(S)}.
\end{equation}
In the case $S = \bbR^n$ we write:
\begin{equation}
\iint_{\tiny
\begin{array}{l}
(x,y) \in S \times S\\
|x - y | \geq h
\end{array} } 
\frac{|u(x) - u(y)|^2}{| x - y |^{n+ 2s } }\rmd x \rmd y = \int_{|z| \geq h} \frac{\| u - \tau_z u \|_{\rmL^2}^2}{|z|^{n+ 2s}}\rmd z.
\end{equation}
Then we remark that:
\begin{equation}
\int_{|z| \geq h} \frac{1}{|z|^{n+ 2s}}\rmd z \cleq \frac{1}{s h^{2s}}. 
\end{equation}
This proves (\ref{eq:sobloc}) when $S = \bbR^n$.

This estimate extends to other domains $S$ because there exist continuous extension operators from Lipschitz subdomains of $\bbR^n$ to $\bbR^n$. 

Then we replace $h$ by $\epsilon h$, for a fixed $\epsilon$ small enough that the $(\epsilon h)$-neighborhood of a cell $T$ is in $\calM(T)$. In the first term on the right hand side of (\ref{eq:sobloc}), we may let the integration domain consist of pairs $(x,y) \in T \times \calM(T)$, for $T \in \calT_h$. This integral is in turn dominated by the first term on the right hand side of (\ref{eq:sobdom}). 
\end{proof}
The point of this lemma is that the norms on the right hand side are local enough that standard scaling techniques from reference macro-elements may be used.

Recall that we suppose that our finite element spaces contain polynomials up to degree $p$. We get:
\begin{proposition}\label{prop:ratepi}
 For $0 < s < s'< 1/2$ we have:
\begin{equation}
\lfloor u - \Pi_hu \rfloor_s \cleq \lfloor u - I_h (\phi_{\epsilon h} \ast u) \rfloor_{s'} \cleq h^{p+1 - s'} | \nabla^{p+1} u|.
\end{equation}
\end{proposition}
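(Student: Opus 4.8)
The plan is to establish the two inequalities separately, the organizing principle being that the outer object $I_h(\phi_{\epsilon h}\ast u)$ is amenable to \emph{local} macro-element estimates, whereas the projection $\Pi_h$ is nonlocal and can only be reached through it. So I would first handle the right-hand inequality, which is the genuinely ``approximation-theoretic'' one, and then deduce the left-hand inequality by comparing $\Pi_h u$ with $I_h(\phi_{\epsilon h}\ast u)$.

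For the bound $\lfloor u - I_h(\phi_{\epsilon h}\ast u)\rfloor_{s'}\cleq h^{p+1-s'}|\nabla^{p+1}u|$ I would set $w = u - I_h(\phi_{\epsilon h}\ast u)$ and apply Lemma \ref{lem:sobdom}, which splits $\lfloor w\rfloor_{s',S}^2$ into $\sum_{T\in\calT_h^n}\lfloor w\rfloor_{s',\calM(T)}^2$ plus the term $\frac{1}{s' h^{2s'}}|w|_S^2$. The latter is controlled by the $\rmL^2$ approximation bound $|w| = |u - I_h(\phi_{\epsilon h}\ast u)|\cleq h^{p+1}|\nabla^{p+1}u|$, which yields exactly $h^{2(p+1-s')}|\nabla^{p+1}u|^2$. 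For each macro-element term I would use that both $\phi_{\epsilon h}\ast\cdot$ and $I_h$ reproduce polynomials of degree at most $p$, so that $\id - I_h(\phi_{\epsilon h}\ast\cdot)$ annihilates them; rescaling $\calM(T)$ to a reference macro-element of unit size and invoking a Bramble--Hilbert/Deny--Lions argument (legitimate because $s'<1/2<p+1$ makes $\id - I_h(\phi_{\epsilon h}\ast\cdot)$ bounded $\rmH^{p+1}\to\rmH^{s'}$ on the reference configuration) gives $\lfloor w\rfloor_{s',\hat\calM}\cleq|\hat\nabla^{p+1}\hat u|_{\hat\calM}$, which rescales to $\lfloor w\rfloor_{s',\calM(T)}\cleq h^{p+1-s'}|\nabla^{p+1}u|_{\calM(T)}$. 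Summing the squares over the $n$-cells, whose macro-elements overlap boundedly, closes this inequality.

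For the left-hand inequality I would exploit that $\Pi_h$ is a projection onto $X_h$: for the choice $w = I_h(\phi_{\epsilon h}\ast u)\in X_h$ one has $u - \Pi_h u = (\id - \Pi_h)(u - w)$, so that $\lfloor u - \Pi_h u\rfloor_s\leq\lfloor u - w\rfloor_s + \lfloor\Pi_h(u-w)\rfloor_s$. Since $S$ has finite diameter, the pointwise kernel bound $|x-y|^{-(n+2s)}\leq\diam(S)^{2(s'-s)}|x-y|^{-(n+2s')}$ gives $\lfloor\cdot\rfloor_s\cleq\lfloor\cdot\rfloor_{s'}$, so the first term is $\cleq\lfloor u - I_h(\phi_{\epsilon h}\ast u)\rfloor_{s'}$. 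For the second term I would invoke Proposition \ref{prop:spstabs}, the $\rmH^{s'}\to\rmH^s$ stability of $\Pi_h$, to get $\lfloor\Pi_h(u-w)\rfloor_s\cleq\|u-w\|_{\rmH^{s'}}=(|u-w|^2+\lfloor u-w\rfloor_{s'}^2)^{1/2}$. Combining, $\lfloor u - \Pi_h u\rfloor_s\cleq\lfloor u - I_h(\phi_{\epsilon h}\ast u)\rfloor_{s'}+|u - I_h(\phi_{\epsilon h}\ast u)|$, and the trailing $\rmL^2$ remainder, being $\cleq h^{p+1}|\nabla^{p+1}u|\leq h^{p+1-s'}|\nabla^{p+1}u|$, is of higher order and absorbed into the right member of the stated chain.

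The main obstacle is precisely the treatment of $\Pi_h$: because finite element differential forms do not lie in $\rmH^{1/2}$, the projection $\Pi_h$ is \emph{not} stable $\rmH^s\to\rmH^s$ but only $\rmH^{s'}\to\rmH^s$ with a genuine loss, and it is moreover nonlocal, so its error cannot be estimated cell by cell. This is what forces the detour through the smoothed interpolant and is exactly what produces the gap between the index $s$ on the left and $s'$ on the right. The other technical point to nail down is the boundedness $\rmH^{p+1}\to\rmH^{s'}$ of $\id - I_h(\phi_{\epsilon h}\ast\cdot)$ on the reference configuration, with constants uniform over the admissible (combinatorially fixed, compact) family of reference macro-elements, so that the Bramble--Hilbert constant can be pulled outside the sum over $T$.
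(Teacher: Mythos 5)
Your proof follows essentially the same route as the paper: the right-hand inequality via Lemma \ref{lem:sobdom} combined with a Bramble--Hilbert/scaling argument on reference macro-elements (using that $\id - I_h(\phi_{\epsilon h}\ast\cdot)$ annihilates polynomials of degree $\leq p$) together with the standard $\rmL^2$ estimate for the remaining term, and the left-hand inequality via the projection property of $\Pi_h$ and its $\rmH^{s'}\to\rmH^s$ stability from Proposition \ref{prop:spstabs}. The only cosmetic discrepancy is that the local scaled estimate should carry $|\nabla^{p+1}u|_{\calM^2(T)}$ on the right rather than $|\nabla^{p+1}u|_{\calM(T)}$, since the operator restricted to $\calM(T)$ sees data on the second-order macro-element; this does not affect the bounded-overlap summation.
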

\begin{proof}
The first inequality follows from Proposition \ref{prop:spstabs}.  

The second inequality is based on Lemma \ref{lem:sobdom}, as follows. We look at the two terms on the right hand side of (\ref{eq:sobdom}) separately.

When $T$ is an $n$-cell in $\calT_h$ we denote by $\calM^2(T)$ the macro-element of order $2$, defined by adding one more layer of elements around $\calM(T)$. We have, by scaling:
\begin{equation}
\lfloor u - I_h (\phi_{\epsilon h} \ast u) \rfloor_{s', \calM(T)} \cleq h^{p+1 - s'} | \nabla^{p+1} u|_{\calM^2(T)}.
\end{equation}
This is combined with the more standard estimate:
\begin{equation}
\frac{1}{h^{s'}}| u - I_h (\phi_{\epsilon h} \ast u) | \cleq h^{p+1 - s'} | \nabla^{p+1} u|.
\end{equation}
This completes the proof.
\end{proof}

\begin{proposition}\label{prop:ratepid}
We have, for $s \in ]0,1[$, and $u \in \rmH^{p+1}\alter^\bs(S)$:
\begin{equation}
\| \rmd u - \rmd \Pi_h u\|_{\rmH^{-s}} \cleq h^{p+s} | \nabla^{p+1} u |.
\end{equation}
\end{proposition}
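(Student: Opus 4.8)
The plan is to prove the estimate by interpolating between the two integer-order endpoints $s=0$ and $s=1$, each of which is elementary. Throughout I would use that $\Pi_h$ commutes with the exterior derivative (Proposition \ref{prop:l2stab}(iii)), so that
\[
\rmd u - \rmd \Pi_h u = (I - \Pi_h)\rmd u = \rmd(u - \Pi_h u).
\]
Write $f = \rmd u - \rmd \Pi_h u$; the goal is $\|f\|_{\rmH^{-s}} \cleq h^{p+s} | \nabla^{p+1} u|$ for $s \in \,]0,1[$, where $\rmH^{-s}(S)$ is understood as the dual of $\rmH^s(S)$.

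First I would establish the $\rmL^2$ endpoint. Viewing $f = (I - \Pi_h)\rmd u$ and using that $\Pi_h$ reproduces polynomials of degree up to $p$, the same Bramble--Hilbert/smoothed-interpolation argument that yields the optimal-order estimate, but stopped at order $p$, gives $|f| \cleq h^p | \nabla^p \rmd u | \cleq h^p | \nabla^{p+1} u |$. The point to be careful about here is that one must \emph{not} apply the top-order ($p+1$) approximation estimate to $\rmd u$ directly, since that would cost $p+2$ derivatives of $u$; instead one uses order $p$, i.e.\ $p$ derivatives of $\rmd u$, which matches the $p+1$ derivatives of $u$ appearing in the statement.

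Next I would establish the $\rmH^{-1}$ endpoint by duality. For $\phi \in \rmH^1 \alter^\bs(S)$, integration by parts on the torus (no boundary terms) together with $f = \rmd(u - \Pi_h u)$ gives
\[
\langle f, \phi \rangle = \langle u - \Pi_h u,\, \rmd^\star \phi \rangle,
\]
whence $|\langle f, \phi \rangle| \leq |u - \Pi_h u|\, |\rmd^\star \phi| \leq |u - \Pi_h u|\, \|\phi\|_{\rmH^1}$. Taking the supremum over $\phi$ and invoking the optimal-order $\rmL^2$ estimate $|u - \Pi_h u| \cleq h^{p+1} | \nabla^{p+1} u |$ yields $\|f\|_{\rmH^{-1}} \cleq h^{p+1} | \nabla^{p+1} u |$. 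The integration by parts is legitimate because $u - \Pi_h u \in \rmL^2$ with $\rmd(u - \Pi_h u) \in \rmL^2$, while $\phi \in \rmH^1$ lies in the domain of $\rmd^\star$, and there are no boundary contributions on the closed manifold $S$.

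Finally I would interpolate. Since $\rmH^{-s} = [\rmL^2, \rmH^{-1}]_s$, the interpolation inequality (as in Lemma 25.3 of \cite{Tar07}) applied to the fixed element $f$ gives
\[
\|f\|_{\rmH^{-s}} \cleq \|f\|_{\rmL^2}^{1-s}\, \|f\|_{\rmH^{-1}}^{s} \cleq \bigl(h^p | \nabla^{p+1} u |\bigr)^{1-s} \bigl(h^{p+1} | \nabla^{p+1} u |\bigr)^{s} = h^{p+s} | \nabla^{p+1} u |,
\]
which is the claim. I expect the main work to be purely bookkeeping: securing the intermediate-order Bramble--Hilbert estimate at the $\rmL^2$ endpoint with exactly $p+1$ derivatives of $u$ (not $p+2$), and checking that the duality step carries no boundary terms. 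The interpolation step itself is immediate and, pleasantly, loses no power of $h$ --- unlike an argument routed through the fractional approximation estimates of Proposition \ref{prop:ratepi}, which would cost an $\epsilon$.
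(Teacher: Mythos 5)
Your proof is correct and follows essentially the same route as the paper's: the $\rmL^2$ endpoint via commutation of $\Pi_h$ with $\rmd$ and the order-$p$ estimate applied to $\rmd u$, the $\rmH^{-1}$ endpoint via $\| \rmd u - \rmd \Pi_h u\|_{\rmH^{-1}} \cleq |u - \Pi_h u|$, and interpolation between the two. You merely spell out the duality/integration-by-parts step that the paper leaves implicit.
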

\begin{proof}
We write:
\begin{align}
| \rmd u - \rmd \Pi_h u| & = | \rmd u - \Pi_h \rmd u|,\\
& \cleq h^{p} | \nabla^{p} \rmd u |,\\
& \cleq h^{p} | \nabla^{p+1} u |.
\end{align}
We also have:
\begin{align}
\| \rmd u - \rmd \Pi_h u\|_{\rmH^{-1}} & \cleq  | u - \Pi_h u|,\\
& \cleq h^{p+1} | \nabla^{p+1} u |.
\end{align}
One then concludes by interpolation theory.
\end{proof}


\paragraph{Fractional estimates on discrete Hodge decompositions.}

\begin{proposition}\label{prop:discdomv}
For $0< s <s' < 1/2$ we have an estimate, for $u\in V_h$:
\begin{equation}
\lfloor u \rfloor_s \cleq  \| \rmd u \|_{\rmH^{s'-1}} \cleq |\rmd u|.
\end{equation}
\end{proposition}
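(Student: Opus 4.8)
The plan is to dispatch the right-hand inequality at once and reduce the left-hand one to continuous Hodge theory on the torus via mollification. For $\|\rmd u\|_{\rmH^{s'-1}}\cleq|\rmd u|$, note that $s'-1<0$, so $\rmL^2=\rmH^0$ embeds continuously into $\rmH^{s'-1}$ and this bound is immediate. The substance is $\lfloor u\rfloor_s\cleq\|\rmd u\|_{\rmH^{s'-1}}$. Here I would first pass to the mollification: by Proposition \ref{prop:slobstab}, for $\epsilon$ fixed small enough, $\lfloor u\rfloor_s\cleq\lfloor\phi_{\epsilon h}\ast u\rfloor_s$, so it suffices to bound the Slobodetskij seminorm of the smooth field $\tilde u=\phi_{\epsilon h}\ast u$. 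I then Hodge-split $\tilde u$. Since convolution commutes with $\rmd$ and is selfadjoint, it preserves both $W$ and $\overline V$ and hence commutes with the projection $\frH$; thus $\tilde u=\frH\tilde u+(\tilde u-\frH\tilde u)$ with $\frH\tilde u=\phi_{\epsilon h}\ast\frH u\in\overline V$ and $\tilde u-\frH\tilde u=\phi_{\epsilon h}\ast(u-\frH u)\in W$, and I estimate the two pieces of $\lfloor\tilde u\rfloor_s\cleq\lfloor\frH\tilde u\rfloor_s+\lfloor\tilde u-\frH\tilde u\rfloor_s$ separately.

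For the coexact piece I would use elliptic regularity on the torus. Bounding the seminorm by the full norm and then applying elliptic regularity for elements of $\overline V$ gives $\lfloor\frH\tilde u\rfloor_s\cleq\|\frH\tilde u\|_{\rmH^s}\cleq\|\rmd\frH\tilde u\|_{\rmH^{s-1}}$. Now $\rmd\frH\tilde u=\rmd\tilde u=\phi_{\epsilon h}\ast\rmd u$, and convolution by $\phi_{\epsilon h}$ is a contraction on every Sobolev space, so $\lfloor\frH\tilde u\rfloor_s\cleq\|\rmd u\|_{\rmH^{s-1}}\cleq\|\rmd u\|_{\rmH^{s'-1}}$, the last step using $s-1<s'-1$ and hence $\rmH^{s'-1}\hookrightarrow\rmH^{s-1}$. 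This piece therefore gives the desired bound with room to spare (it does not even require $s<s'$).

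For the closed piece, writing $r=u-\frH u$, I would use the standard interpolation inequality for the smooth field $\phi_{\epsilon h}\ast r$ together with the convolution gradient bound $\|\nabla(\phi_{\epsilon h}\ast r)\|\cleq h^{-1}|r|$ to get
\begin{equation}
\lfloor\tilde u-\frH\tilde u\rfloor_s=\lfloor\phi_{\epsilon h}\ast r\rfloor_s\cleq|\phi_{\epsilon h}\ast r|^{1-s}\,\|\nabla(\phi_{\epsilon h}\ast r)\|^{s}\cleq h^{-s}|r|.
\end{equation}
Since $u\in V_h$, Proposition \ref{prop:gap} yields $|r|=|u-\frH u|\leq h|\rmd u|$, so $\lfloor\tilde u-\frH\tilde u\rfloor_s\cleq h^{1-s}|\rmd u|$. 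It then remains to absorb the $\rmL^2$ norm of $\rmd u$ into the negative norm via the inverse estimate $h^{1-s'}|\rmd u|\cleq\|\rmd u\|_{\rmH^{s'-1}}$; granting it, $h^{1-s}|\rmd u|\cleq h^{s'-s}\|\rmd u\|_{\rmH^{s'-1}}\cleq\|\rmd u\|_{\rmH^{s'-1}}$ because $s'-s>0$ and $h$ is bounded, which is precisely where the hypothesis $s<s'$ is consumed.

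The main obstacle is this last inverse estimate, a negative-norm inverse inequality for finite element fields; one cannot test the duality norm against $\rmd u$ itself, since $\lfloor\rmd u\rfloor_{1-s'}$ may diverge for discontinuous elements when $1-s'>1/2$. The way around is to test against the smoothed field $\phi_{\epsilon h}\ast\rmd u$, which is smooth: by Proposition \ref{prop:l2stab} one has $\langle\rmd u,\phi_{\epsilon h}\ast\rmd u\rangle\cgeq|\rmd u|^2$ for $\epsilon$ small, while the interpolation inequality and the gradient bound give $\|\phi_{\epsilon h}\ast\rmd u\|_{\rmH^{1-s'}}\cleq h^{-(1-s')}|\rmd u|$, whence
\begin{equation}
\|\rmd u\|_{\rmH^{s'-1}}\cgeq\frac{\langle\rmd u,\phi_{\epsilon h}\ast\rmd u\rangle}{\|\phi_{\epsilon h}\ast\rmd u\|_{\rmH^{1-s'}}}\cgeq h^{1-s'}|\rmd u|.
\end{equation}
Combining the two pieces then completes the first inequality, and together with the trivial embedding the proposition follows.
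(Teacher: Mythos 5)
Your argument is correct, and it shares the paper's skeleton -- split $u$ relative to the continuous Hodge projector $\frH$, kill the $W$-component using the closeness of $V_h$ to $V$, and finish the $\overline V$-component with elliptic regularity $\lfloor \frH u \rfloor_{s'} \cleq \|\rmd u\|_{\rmH^{s'-1}}$ -- but the implementation differs in two substantive ways. First, the paper never mollifies: it writes $u = \Pi_h(u-\frH u) + \Pi_h \frH u$, applies the inverse inequality of Proposition \ref{prop:slobl2} to the first piece, and then converts $h^{-s}|u - \frH u|$ via Lemma \ref{lem:mag} and the $\rmH^{s'}$-approximation error of $\Pi_h$ into $h^{s'-s}\lfloor \frH u\rfloor_{s'}$; you instead pass to $\phi_{\epsilon h}\ast u$ via Proposition \ref{prop:slobstab}, use the commutation of convolution with $\frH$, and bound the closed piece by $h^{-s}|u-\frH u| \cleq h^{1-s}|\rmd u|$ through Proposition \ref{prop:gap}. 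Second, because your closed piece ends at $h^{1-s}|\rmd u|$ rather than at a fractional seminorm of $\frH u$, you are forced to prove an additional negative-norm inverse inequality $h^{1-s'}|\rmd u| \cleq \|\rmd u\|_{\rmH^{s'-1}}$, and your duality argument for it (testing against $\phi_{\epsilon h}\ast \rmd u$ rather than $\rmd u$ itself, since $\rmd u \notin \rmH^{1-s'}$ for $1-s'>1/2$) is a valid and reusable device, though it is machinery the paper's route avoids entirely. Both proofs consume the hypothesis $s<s'$ at the analogous point, namely to absorb a positive power of $h$; the paper's version is slightly leaner, while yours has the minor advantage of isolating the role of Proposition \ref{prop:gap} and of not invoking the $\rmH^{s'}$ error estimate for $\Pi_h$.
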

\begin{proof}

We use Proposition \ref{prop:slobl2}, the stability of $\Pi_h$ in $\rmL^2$ and also Proposition \ref{prop:spstabs}, Lemma \ref{lem:mag} and the error estimate for $\Pi_h$  for $\rmH^s(S)$ data, and finally the continuous theory of the Hodge decomposition. For $u \in V_h$:
\begin{align}
\lfloor u \rfloor_s & = \lfloor \Pi_h u \rfloor_s,\\
& \leq \lfloor \Pi_h (u -\frH u) \rfloor_s + \lfloor \Pi_h \frH u \rfloor_s,\\
& \cleq h^{-s} | \Pi_h (u -\frH u) | + \lfloor \Pi_h \frH u \rfloor_s,\\
& \cleq h^{-s} | u -\frH u | + \lfloor \frH u \rfloor_{s'},\\
& \cleq h^{-s} | \frH u -\Pi_h \frH u) | + \lfloor \frH u \rfloor_{s'},\\
& \cleq  \lfloor \frH u \rfloor_{s'},\\
& \cleq \| \rmd u \|_{\rmH^{s'-1}},\\
& \cleq |\rmd u|.
\end{align}
This concludes the proof.
\end{proof}

The following proposition reflects the proximity of $G_h$ to $G$ and the equivalence of norms on the finite dimensional space $G$.
\begin{proposition} \label{prop:discdomb} For $0<s <1/2$ we have an estimate, for $u\in G_h$:
\begin{equation}
\lfloor u \rfloor_s \cleq |u|.
\end{equation}
\end{proposition}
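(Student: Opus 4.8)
The plan is to exploit the reading suggested by the proposition's placement: on the fixed, finite-dimensional space $G$ of harmonic forms all norms are equivalent, so $\lfloor g \rfloor_s \cleq |g|$ for $g \in G$, while $G_h$ lies $O$-close to $G$ by Proposition \ref{prop:basic}(ii). Rather than transferring the $G$-estimate by hand, the quickest rigorous route is to feed $u \in G_h$ through the discrete machinery already assembled.

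First I would apply the discrete interpolation estimate of Proposition \ref{prop:discint}, valid on all of $X_h$, to get for $u \in G_h$:
\[
\lfloor u \rfloor_s \cleq |u|^{1-s} \lceil u \rceil_h^{s}.
\]
This reduces the claim to a bound on the discrete $\rmH^1$ seminorm $\lceil u \rceil_h$ in terms of $|u|$. For that I would invoke the equivalence $[u]_h \ceq \lceil u \rceil_h$ of Proposition \ref{prop:honedom} together with the very definition of $G_h$: since $u \in G_h$ satisfies $\rmd u = 0$ and $u \perp \rmd X_h$, both contributions to the discrete domain seminorm vanish, so $[u]_h = 0$ and hence $\lceil u \rceil_h = 0$. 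Substituting back gives $\lfloor u \rfloor_s = 0$, which is (far more than) the stated bound.

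The main obstacle is conceptual: one might prefer a proof that avoids the torus-specific Proposition \ref{prop:honedom} and rests only on the proximity of $G_h$ to $G$. The natural attempt is to let $g \in G$ be the $O$-orthogonal projection of $u$, note that $u \mapsto g$ is an isomorphism $G_h \to G$ for small $h$ (so $|g| \ceq |u|$), bound $\lfloor g \rfloor_s \cleq |g|$ by norm equivalence on the fixed space $G$, and then control the discrepancy $\lfloor u - g \rfloor_s$. Here the difficulty surfaces: $u - g \notin X_h$, so to use an inverse inequality one must first push $g$ into $X_h$ via $\Pi_h$ (the smooth remainder $\lfloor \Pi_h g \rfloor_s$ being tamed by the fractional stability of Proposition \ref{prop:spstabs}), after which Proposition \ref{prop:slobl2} yields $\lfloor u - \Pi_h g \rfloor_s \cleq h^{-s}|u - \Pi_h g|$. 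Absorbing the factor $h^{-s}$ would require the gap $\hat\delta(G_h, G)$ to decay at rate $h^s$, whereas Proposition \ref{prop:basic}(ii) supplies only $\hat\delta(G_h, G) \to 0$. On the flat torus this gap is in fact void: by Remark \ref{rem:ggh} one has $G_h = G$, so $g = u$, the discrepancy disappears, and the conclusion again collapses to $\lfloor u \rfloor_s = 0$, in harmony with the computation above.
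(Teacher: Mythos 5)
Your primary argument is correct in the setting of the paper, but it is a genuinely different route from the one the paper takes, and it is worth being clear about what it actually proves. Observing that $[u]_h=0$ for $u\in G_h$ and invoking the equivalence $[u]_h\ceq\lceil u\rceil_h$ of Proposition \ref{prop:honedom} forces $\lceil u\rceil_h=0$, i.e.\ $u$ is a global constant, whence $\lfloor u\rfloor_s=0$. This is logically sound given the propositions as stated, but it works only because Proposition \ref{prop:honedom} is itself a torus-specific fact (part (iii) of its proof is explicitly ``skipped'' in general, and the two-sided equivalence would be false on a manifold whose harmonic forms are non-constant, since $[\cdot]_h$ vanishes on $G_h$ while $\lceil\cdot\rceil_h$ would not). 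So your proof is, in disguise, exactly the ``alternative trivial proof for flat toruses'' of Remark \ref{rem:ggh} that the paper's proof is explicitly designed to avoid: the paper instead maps $G$ onto $G_h$ by the $W_h$-projection $Q_h$, shows $Q_h:G\to G_h$ is eventually invertible with uniformly bounded inverse, and estimates $\lfloor Q_h u\rfloor_s$ for $u\in G$ using $\Pi_h$, the inverse inequality of Proposition \ref{prop:slobl2}, and norm equivalence on the fixed finite-dimensional space $G$. That argument buys generality beyond the torus; yours buys brevity at the cost of resting on a torus-only lemma.

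Your diagnosis of the ``natural attempt'' in the second half is where I would push back: the obstacle you identify is not real. You claim that absorbing the factor $h^{-s}$ would require $\hat\delta(G_h,G)$ to decay like $h^s$, which Proposition \ref{prop:basic}(ii) does not supply. But one does not need the gap for this: for $u$ in the fixed finite-dimensional space $G$ of smooth forms one has the full approximation rate $|u-Q_hu|\le|u-\Pi_hu|\cleq h\,|\nabla u|\cleq h\,|u|$ (norm equivalence on $G$), so the offending term is $h^{-s}\cdot h\,|u|=h^{1-s}|u|$, which is harmless for $s<1$. This is precisely how the paper closes the argument, and it is the step your sketch is missing. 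I would encourage you to complete that version, since it is the one that does not collapse to ``$G_h$ consists of constants.''
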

\begin{proof} This proof is designed so as to extend to general $S$. Remark \ref{rem:ggh} would provide an alternative trivial proof for flat toruses.

Let $Q_h$ denote the $\rmL^2$ projection onto $W_h$. We remark that it maps $G$ to $G_h$. We also use that $\Pi_h$ maps $W$ to $W_h$, which follows from commutation with the exterior derivative. We may write for $u\in G$:
\begin{equation}
|u -Q_h u|  \leq |u -\Pi_h u | \cleq h |\nabla u|.
\end{equation}
Since $G$ and $G_h$ have the same dimension, one deduces that $Q_h : G \to G_h$ is eventually invertible with an inverse which is uniformly bounded $O \to O$.

Then we write, for $u \in G$, using Propositions \ref{prop:slobl2} and \ref{prop:spstabs}, equivalence of norms on $G$ and the above remark:
\begin{align}
\lfloor Q_h u \rfloor_s & \leq \lfloor Q_h (u  - \Pi_h u) \rfloor_s  + \lfloor Q_h \Pi_h u \rfloor_s ,\\
& \cleq h^{-s} | Q_h (u  -  \Pi_h u )| + \lfloor \Pi_h u \rfloor_s,\\
& \cleq h^{-s} | u  - \Pi_h u | + \lfloor u \rfloor_{s'}, \textrm{ with } s < s'< 1/2,\\
& \cleq \lfloor u \rfloor_{s'},\\
&  \cleq | u |,\\
& \cleq | Q_h u |.
\end{align}
This concludes the proof.
\end{proof}

The following proposition shows that solutions to discrete problems may have an improved regularity. For instance, for $0$-forms, it provides $\rmH^{1+s}(S)$ estimates for finite element approximations to the Laplace equation.
\begin{proposition} \label{prop:discdomw}
For $0 < s <1/2$ we have an estimate, for $u\in V_h$:
\begin{equation}
\lfloor \rmd u \rfloor_s \cleq \sup_{v \in X_h} \frac{ | \langle \rmd u , \rmd v \rangle |}{| v|}.
\end{equation}
\end{proposition}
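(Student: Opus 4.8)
The plan is to apply the discrete interpolation estimate of Proposition \ref{prop:discint} not to $u$ itself but to the closed form $\rmd u$, and then to observe that both factors appearing on the right of that estimate are controlled by the single quantity on the right-hand side of the target inequality. Write $M = \sup_{v \in X_h} |\langle \rmd u, \rmd v \rangle| / |v|$. Since $\rmd$ maps $X_h^k$ into $X_h^{k+1}$, we have $\rmd u \in X_h$, and it is closed, $\rmd (\rmd u) = 0$. Applying Proposition \ref{prop:discint} to the element $\rmd u$ gives
\begin{equation}
\lfloor \rmd u \rfloor_s \cleq |\rmd u|^{1-s} \, \lceil \rmd u \rceil_h^{s}.
\end{equation}
It then suffices to prove $|\rmd u| \cleq M$ and $\lceil \rmd u \rceil_h \cleq M$, since the product is then $\cleq M^{1-s} M^{s} = M$.

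The discrete $\rmH^1$ seminorm is the easy factor. By Proposition \ref{prop:honedom} we have $\lceil \rmd u \rceil_h \ceq [\rmd u]_h$, and in the definition (\ref{eq:ysem}) of the discrete domain seminorm the first term $|\rmd(\rmd u)|^2$ vanishes because $\rmd u$ is closed, leaving $[\rmd u]_h = M$. Hence $\lceil \rmd u \rceil_h \ceq M$ directly.

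For the $\rmL^2$ factor I would first test the supremum defining $M$ against the particular choice $v = u$, which yields $M \geq |\rmd u|^2 / |u|$, that is $|\rmd u|^2 \leq M |u|$. It then remains to absorb $|u|$ using a discrete Poincar\'e inequality on $V_h$: for $u \in V_h$ one has $|u| \cleq |\rmd u|$. This is where the hypothesis $u \in V_h$ (rather than general $u \in X_h$) is genuinely used, and it is the main obstacle. It follows by writing $|u| \leq |u - \frH u| + |\frH u|$, bounding the first term by $h |\rmd u|$ through Proposition \ref{prop:gap}, and the second by the continuous Poincar\'e inequality on $\overline V$ together with $\rmd \frH u = \rmd u$ (so $|\frH u| \cleq |\rmd \frH u| = |\rmd u|$). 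Combining these gives $|\rmd u|^2 \leq M |u| \cleq M |\rmd u|$, hence $|\rmd u| \cleq M$.

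With both $|\rmd u| \cleq M$ and $\lceil \rmd u \rceil_h \ceq M$ in hand, substitution into the displayed interpolation bound finishes the argument. Apart from the discrete Poincar\'e step, everything is bookkeeping built on the already-established norm equivalences of Propositions \ref{prop:honedom} and \ref{prop:discint}; the only delicate point is recognizing that testing against $v=u$ together with the Poincar\'e inequality is exactly what converts the abstract supremum $M$ into control of $|\rmd u|$.
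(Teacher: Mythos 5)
Your proof is correct, but it follows a genuinely different route from the paper's. The paper smooths first ($\lfloor \rmd u \rfloor_s \ceq \lfloor \rmd(\phi_{\epsilon h}\ast u)\rfloor_s$ via Proposition \ref{prop:slobstab}), invokes continuous elliptic regularity to pass to $\|\rmd^\star\rmd(\phi_{\epsilon h}\ast u)\|_{\rmH^{s-1}}$, and then runs a duality argument: test against $v \in \overline V \cap \rmH^{1-s}(S)$, transfer the mollifier to $v$, and replace $\phi_{\epsilon h}\ast v$ by $P_h(\phi_{\epsilon h}\ast v)$ using the Aubin--Nitsche estimate of Proposition \ref{prop:phan}. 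You instead apply the discrete interpolation estimate of Proposition \ref{prop:discint} to the closed form $\rmd u \in X_h$ and control each factor by $M$: the factor $\lceil \rmd u\rceil_h$ via the norm equivalence of Proposition \ref{prop:honedom} (legitimate, since $[\rmd u]_h = M$ exactly, the term $|\rmd(\rmd u)|$ vanishing), and the factor $|\rmd u|$ via the test function $v=u$ combined with the discrete Poincar\'e--Friedrichs inequality $|u|\cleq|\rmd u|$ on $V_h$, which you correctly assemble from Proposition \ref{prop:gap} and the continuous Poincar\'e inequality (this is also recorded in Remark \ref{rem:dischod}). Your assembly is cleaner and stays entirely within the discrete toolbox already established by Section \ref{sec:frac}; note, though, that the elliptic-regularity/duality machinery has not really been avoided --- it is baked into the proof of Proposition \ref{prop:honedom}, which itself leans on Proposition \ref{prop:phan}. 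The paper's direct argument yields the nominally sharper bound with the supremum taken over $V_h$, but since $\langle \rmd u, \rmd v\rangle$ only sees the $V_h$-component of $v$ and that component has smaller norm, the two suprema coincide, so nothing is lost in your version.
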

\begin{proof}
We write, for $u\in V_h$, using Proposition \ref{prop:slobstab} and elliptic regularity:
\begin{align}
\lfloor \rmd u \rfloor_s & \ceq \lfloor \rmd (\phi_{\epsilon h} \ast u) \rfloor_s, \\
& \ceq | \rmd^\star \rmd (\phi_{\epsilon h} \ast u)|_{\rmH^{s-1}}.
\end{align}
From this estimate we continue:
\begin{align}
\lfloor \rmd u \rfloor_s & \cleq \sup_{v \in \overline V \cap \rmH^{1-s}(S)} \frac{\langle \rmd  (\phi_{\epsilon h} \ast u) , \rmd v \rangle}{|v| + \lfloor v \rfloor_{1-s}},\\
& \cleq \sup_{v \in \overline V \cap \rmH^{1-s}(S)} \frac{\langle \rmd  u , \rmd   (\phi_{\epsilon h} \ast v) \rangle}{|v| + \lfloor v \rfloor_{1-s}},\\
& \cleq \sup_{v \in \overline V \cap \rmH^{1-s}(S)} \frac{\langle \rmd  u , \rmd   P_h (\phi_{\epsilon h} \ast v) \rangle}{|v| + \lfloor v \rfloor_{1-s}}.
\end{align}
Next we write, for $v \in \overline V \cap \rmH^{1-s}(S)$, using Proposition \ref{prop:phan}:
\begin{align}
| \phi_{\epsilon h} \ast v  -  P_h (\phi_{\epsilon h} \ast v) | & \cleq h \, | \rmd (\phi_{\epsilon h} \ast v) |,\\
& \cleq h^{1-s} | \rmd v |_{\rmH^{-s}(S)},\\
& \cleq h^{1-s} \lfloor v \rfloor_{1-s}.
\end{align}
We deduce:
\begin{align}
|  P_h (\phi_{\epsilon h} \ast v) | & \cleq  | \phi_{\epsilon h} \ast v|   +  h^{1-s} \lfloor v \rfloor_{1-s},\\
& \cleq | v|  + \lfloor v \rfloor_{1-s}.
\end{align}
So we obtain:
\begin{equation}
\lfloor \rmd u \rfloor_s \cleq \sup_{v \in V_h} \frac{ | \langle \rmd u,  \rmd v \rangle |}{| v|}.
\end{equation}
This is a little stronger than the announced result.
\end{proof}

\begin{remark}\label{rem:dischod}
The results of this section give in particular estimates for elements of $V_h$ that mimick similar estimates known to hold for elements of $V$. Various results along these lines have been obtained the last decade.
\begin{itemize}
\item  The property that for $u \in V_h$, $| u| \cleq |\rmd u|$ is known as the Poincar\'e-Friedrichs estimate, and was proved for closed manifolds in \cite{Chr07NM}. It is equivalent to the wellposedness of equations related to the Hodge-Laplace \cite{ArnFalWin06}.

\item The stronger result, that for $u \in V_h$, $| u| \cleq |\rmd u|_{\rmH^{-1}}$ was proved in \cite{Chr09Calc} in 3D, and used there to prove a discrete div-curl lemma. It can in general be deduced from the existence of $\rmL^2$-bounded commuting projections.
 
\item The property that for $u_h \in V_h$, if $\rmd u_h$ is bounded in $\rmL^2$, then a subsequence of $(u_h)$ converges in $\rmL^2$, is known as discrete compactness, in the sense of Kikuchi \cite{Kik89}. It is used for proving convergence of eigenvalue problems in particular for Maxwell's equations \cite{Bof10}. This property is discussed in general, in relation to the previous two variants of Poincar\'e-Friedrichs estimates, in \cite{ChrWin13IMA}. Discrete compatness is equivalent to the gap property $\delta(V_h, V) \to 0$ in the norm of $X$. The  gap property  has applications outside spectral theory, for instance to prove inf-sup conditions for non-coercive problems, see \cite{BufChr03}\cite{Buf05} and Lemma 4.4.1 in \cite{Chr02PhD}.

\item For $u \in V_h$, the $\rmL^p$-norm of $u$ is bounded by the $\rmL^2$ norm of $\rmd u$, for the exponents $p$ such that we have a Sobolev embedding $\rmH^1(S) \to \rmL^p(S)$. For differential forms this result is included in \cite{ChrMunOwr11} (Propositions 5.69 and 5.70). Such discrete Sobolev embeddings were used to study non-linear problems in 2D in \cite{ChrSch11}.

\end{itemize}
\end{remark}

\begin{remark}
In \cite{DroEymGalHer13} a quite general framework for analysing numerical schemes is proposed, mainly in the context of scalar equations but including mixed methods. It encompasses both non-conforming Galerkin methods and finite volume like schemes, and involves in particular an axiomatic approach to discrete compactness properties. It is quite different from discrete compactness in the sense of Kikuchi. On the other hand, that discrete compactness in their sense (see Definition 2.8 of \cite{DroEymGalHer13}) holds for the present type of discretizations is proved in Proposition 5.71 of \cite{ChrMunOwr11}, at least for the $\rmL^2$ setting.
\end{remark}

\section{Abstract discretization of the Hodge-Dirac \label{sec:hoddir}}

For the Hodge-Dirac discretized with differential forms, $X_h$ is not a subspace of $Y$ and $X$ is not compact in $O$. Contrary to the case of Maxwell problems, the operator is unsigned, even after shifting by multiples of the identity. Below we provide ways around these difficulties. A first theory is given, showing how far one can get with a theory based on $X_h$ interpreted as a subspace of $X$. Then we provide second theory, where $X_h$  is viewed as a non-conforming discretization of $Y$. This theory allows for lower order pertubations, as detailed in the third part of this section.

\paragraph{A first general theory.}

We suppose we have a separable Hilbert space $O$, with scalar product $\langle \cdot, \cdot \rangle$ and norm $| \cdot |$. We suppose we have another Hilbert space $X$ which is dense and compactly embedded in $O$. We are given a continuous operator $\rmd: X \to O$, such that the following bilinear form on $X$: 
\begin{equation}
(u, v) \mapsto \langle \rmd u, \rmd v \rangle + \langle u, v \rangle,
\end{equation}
defines the norm of $X$, or at least an equivalent one. We suppose that $\rmd$ maps $X$ to $X$ and in fact that $\rmd^2 = 0$.

We let $W$ denote the kernel of $\rmd$ on $X$. We let $V$ be the orthogonal of $W$ in $X$ with respect to the scalar product of $O$, or, equivalently, that of $X$. We suppose that the injection of $V$ in $O$ is compact.

We let $a$ denote the continuous symmetric bilinear form on $X$ defined by:
\begin{equation}
a(u,v) = \langle \rmd u, v \rangle + \langle u, \rmd v \rangle.
\end{equation}

We are interested in the eigenmode problems, find $u\in X$ and $\lambda \in \bbR$ such that, for all $v\in X$:
\begin{equation}
a(u,v) = \lambda \langle u, v \rangle.
\end{equation}
The kernel of $a$, or equivalently the $0$-eigenspace, is:
\begin{equation}
G = \{u \in X \ : \ \rmd u = 0 \textrm{ and } u \perp \rmd X\}.
\end{equation}
We suppose that it is finite-dimensional.

To study these eigenmode problems, we introduce the bilinear form $b$ on $X$ defined by:
\begin{equation}
b(u,v) = \langle \rmd u, \rmd v \rangle.
\end{equation}
This bilinear form is semi-positive and its kernel is $W$. The orthogonal of the kernel is $V$, which is compactly embedded in $O$. We are therefore in the setting of \cite{ChrWin13IMA}. The non-zero eigenvalues are positive, constitute an increasing and diverging sequence and have finite dimensional associated eigenspaces.

We let $\mu[k]$, $k \in \bbN$, denote the increasing sequence of strictly positive eigenvalues of $b$, counted with multiplicities and with a corresponding choice of eigenvector $v[k]$. We suppose that they are orthonormalized in $O$, so they constitute a Hilbertian basis of $\overline V$, the closure of $V$ in $O$. We let $\lambda[k] = \mu[k]^{1/2}$. We remark that the vectors $\lambda[k]^{-1} \rmd  v[k]$ constitute a Hilbertian basis of the subspace of $W$ orthogonal to $G$.

Using that $v[k] \perp W$  and $\rmd X \subset W$ we compute, for $w \in X$:
\begin{align}
a(v[k] \pm \lambda[k]^{-1} \rmd  v[k], w) & = \langle v[k] \pm  \lambda[k]^{-1} \rmd  v[k], \rmd w \rangle + \langle \rmd v[k], w\rangle,\\
& = \langle \pm  \lambda[k]  v[k], w \rangle + \langle \rmd v[k], w\rangle.
\end{align}
We get:
\begin{itemize}
\item $+\lambda[k]$ is an eigenvalue of $a$ for the eigenvector $ v[k] + \lambda[k]^{-1} \rmd  v[k]$,
\item $- \lambda[k]$ is an eigenvalue of $a$ for the eigenvector $ v[k] - \lambda[k]^{-1} \rmd  v[k]$.
\end{itemize}

Together, these two families of vectors constitute a Hilbertian basis of the subspace of $O$ orthogonal to $G$, after a scaling by $\sqrt 2$.

We suppose that we are given a sequence of finite-dimensional subspaces $X_n$ of $X$, $n \in \bbN$. We then look at the discrete eigenvalue problems: find $u_n\in X_n$ and $\lambda_n \in \bbR$ such that, for all $w_n\in X_n$:
\begin{equation}
a(u_n,w_n) = \lambda_n \langle u_n, w_n \rangle.
\end{equation}
We may also study the discrete eigenvalue problems for $b$.

We suppose that $\rmd$ maps $X_n$ to $X_n$. Then the algebraic properties, of the relation between the eigenvalue problems for $a$ and $b$, carry through exactly as above, to the discrete setting.

For the analysis, we suppose that each $X_n$ is equipped with a projector  $\Pi_n : O \to X_n$, that these are uniformly bounded in $O$, commute with $\rmd$ and converge pointwise to the identity. Under these hypotheses the discrete eigenvalue problem for $b$ falls under the scope of \cite{ChrWin13IMA}. There, convergence is proved for the discrete eigenmodes of $b$, and we deduce that convergence also holds for the discrete eigenmodes of $a$. 

The problem is that, as far as we can see, this analysis does not allow one to perturb $a$ by a bilinear form $c$ continuous on $O$, because $X$ is not compactly embedded in $O$. The bilinear form $c$ would model $0$-order terms in the Dirac equation, such as the ones produced by the electromagnetic field. It would also be of interest to weaken the hypothesis $\rmd^2 = 0$ to the following: $\rmd^2$ extends to a bounded operator $O \to O$. This would cover the case where $\rmd$ is a covariant exterior derivative and $\rmd^2$ represents curvature terms.

\paragraph{A second general theory.}

We suppose we have a separable Hilbert space $O$, with scalar product $\langle \cdot, \cdot \rangle$ and norm $| \cdot |$. We suppose we have another Hilbert space $Y$ which is dense and compactly embedded in $O$. We are given a bilinear form $a$ which is continuous  and symmetric on $Y\times Y$ and extends continuously  to $Y\times O$ and $O \times Y$.  We define an operator $A: Y \to O$ by, for all $u \in Y$ and all $v \in O$:
\begin{equation}
\langle A u , v \rangle = a( u,v).
\end{equation}
We make the hypothesis that $A: Y \to O$ is Fredholm. By symmetry, its index is necessarily $0$. Care is taken so that the theory covers the case where $A$ is unsigned.

From the point of view of computations, the difficulty is to construct ''nice'' subspaces of $Y$. For instance, in the first theory developed above, ''nice'' meant equipped with bounded projectors commuting with the differential $\rmd$.  These were not subspaces of $\rmH^1\alter^\bs(S)$, not even of $\rmH^{1/2}\alter^\bs(S)$ . Now we consider a theory without explicit mention of such a differential. We will consider discretization spaces $X_n$ which are not subspaces of $Y$. Since then $a$ cannot be simply restricted to $X_n$, we adopt the setting of non-conforming methods. We suppose we have finite dimensional subspaces $X_n$ of $O$, equipped with symmetric bilinear forms $a_n$, and proceed to define suitable consistency and stability requirements. 

We suppose that we have a space $Z$, intermediate between $O$ and $Y$, with continuous inclusions and such that the injection $Z \to O$ is compact, and such that for all $n$, $X_n \subseteq Z$. 

\begin{remark} \label{rem:choicez} 
For instance, a choice of $Z$ could be of the form: 
\begin{equation}
Z \subseteq [O, Y]_s, \textrm{ for some } s \in ]0, 1]. \label{eq:zees}
\end{equation}
For the Hodge-Dirac, one can use a fractional order Sobolev space:
\begin{equation}
Z^k =  \rmH^s\alter^k(S) \textrm{ for some } s \in ]0, 1/2[.
\end{equation}
Notice that the boundary condition does not make sense in this norm. On the other hand this norm is weak enough to allow finite element spaces without full inter-element continuity. 

We will also use the space:
\begin{equation}
Z^k =  \{u \in \rmH^{s_1}\alter^k(S) \ : \ \rmd u \in  \rmH^{-s_2}\alter^k(S) \} \textrm{ for some } s_1, s_2 \in ]0, 1/2[.
\end{equation}
\end{remark}

The forms $a_n$ are required to be consistent with $a$ in the following sense. 
\begin{definition}\label{def:con} We say that the forms $a_n$ are \emph{consistent} with $a$ when the following holds. 
There is a dense subset $Y^\infty$ of $Y$ such that for all $u \in Y^\infty$ there is a sequence $u_n \in X_n$ such that $u_n \to u$ in $O$ and:
\begin{equation}\label{eq:conan}
\lim_{n \to \infty} \sup_{v \in X_n}\frac{|a(u, v) - a_n(u_n, v)|}{\| v \|_Z} = 0.
\end{equation}
\end{definition}

We also need a stability estimate for $a_n$. The following one is reasonable when $A$ is injective.
\begin{definition}\label{def:stab}
We say that we have a \emph{weak stability}, when the following holds, uniformly in $n$:
\begin{equation}\label{eq:isan}
1 \cleq \infsup{u \in X_n}{v\in X_n}{|a_n(u,v)|}{|u|\, \|v\|_Z}. 
\end{equation}
\end{definition}

Suppose that $A$ is injective, so that it is invertible $Y \to O$. Let $K$ be the inverse. Equivalently, the operator $K: O \to Y$ is defined by, for all $u,v \in O$:
\begin{equation}\label{eq:akdef}
a(Ku, v) = \langle u, v \rangle.
\end{equation}
As an operator $O\to O$, $K$ is symmetric and compact, but unsigned. The spectral theorem for compact self-adjoint operators applies. Under the weak inf-sup condition, one can also define an operator $K_n: O \to X_n$, by, for all $v \in X_n$:
\begin{equation}\label{eq:akhdef}
a(K_n u, v) = \langle u, v\rangle.
\end{equation}
Convergence of the discrete eigenmode problem is obtained by comparing $K_n$ with $K$. The following estimate appears to be the most convenient sufficient condition for convergence.
\begin{proposition}\label{prop:compconv} Suppose $A$ is injective.
Under the conditions of consistency and stability of the preceding definitions, we have convergence in operator norm:
\begin{equation}\label{eq:compconv}
\| K - K_n \|_{O \to O} \to 0.
\end{equation}
\end{proposition}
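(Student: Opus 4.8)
The plan is to argue by contradiction, exploiting weak compactness of the unit ball of $O$ together with the compactness of $K$ and of the embedding $Z\to O$. Suppose \eqref{eq:compconv} fails: then there are $\delta>0$, a subsequence (still indexed by $n$), and unit vectors $u_n\in O$ with $|Ku_n-K_nu_n|\geq\delta$. I would extract a further subsequence with $u_n\rightharpoonup u$ weakly in $O$. Since $K:O\to O$ is compact, hence completely continuous, $Ku_n\to w$ strongly in $O$ with $w:=Ku\in Y$. The entire proof then reduces to showing that $w_n:=K_nu_n$ also converges to $w$ in $O$, which contradicts $|Ku_n-w_n|\geq\delta$.

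To control $w_n$, I first note that the weak stability \eqref{eq:isan} makes $w_n$ uniformly bounded in $O$: testing the defining relation $a_n(w_n,v)=\langle u_n,v\rangle$ and using $|v|\cleq\|v\|_Z$ gives $|w_n|\cleq\sup_{v\in X_n}|\langle u_n,v\rangle|/\|v\|_Z\cleq|u_n|=1$. More usefully, for any $z_n\in X_n$ the inf-sup bound applied to $w_n-z_n\in X_n$ yields
\[
|w_n-z_n|\cleq\sup_{v\in X_n}\frac{|a_n(w_n-z_n,v)|}{\|v\|_Z}=\sup_{v\in X_n}\frac{|\langle u_n,v\rangle-a_n(z_n,v)|}{\|v\|_Z}.
\]
The task is to pick $z_n$ making the right-hand side vanish. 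Here I would insert the exact identity $a(w,v)=\langle u,v\rangle$, valid for $v\in O$ because $a$ extends to $Y\times O$, and thereby split the numerator into $\langle u_n-u,v\rangle$, a consistency defect $a(\tilde w,v)-a_n(z_n,v)$, and a correction $a(w-\tilde w,v)$ accounting for the fact that $w$ must first be approximated.

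The key estimate is for the first term. Since $u_n-u\rightharpoonup0$ in $O$ and the embedding $Z\to O$ is compact, its adjoint $O\cong O'\to Z'$ is compact (Schauder), so $u_n-u\to0$ strongly in $Z'$; that is, $\sup_{v\in Z,\,\|v\|_Z\le1}|\langle u_n-u,v\rangle|\to0$, hence also the supremum over $v\in X_n$. For the consistency defect I use density of $Y^\infty$ in $Y$: given $\eta>0$, choose $\tilde w\in Y^\infty$ with $\|w-\tilde w\|_Y<\eta$ and apply Definition \ref{def:con} to $\tilde w$, obtaining $z_n\in X_n$ with $z_n\to\tilde w$ in $O$ and the defect tending to $0$ by \eqref{eq:conan}. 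The correction is absorbed by continuity of $a$ on $Y\times O$ together with $|v|\cleq\|v\|_Z$, giving $\sup_{v\in X_n}|a(w-\tilde w,v)|/\|v\|_Z\cleq\|w-\tilde w\|_Y<\eta$ uniformly in $n$. Combining, $\limsup_n|w_n-\tilde w|\cleq\eta$, and letting $\eta\to0$ (using $\|\tilde w-w\|_O\cleq\|\tilde w-w\|_Y$) gives $w_n\to w$ in $O$, the sought contradiction.

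The main obstacle is exactly the term $\langle u_n-u,v\rangle$ tested against the moving discrete space $X_n$: weak convergence of $u_n$ in $O$ alone does not control it uniformly in $n$, and it is precisely the compactness of the intermediate embedding $Z\to O$, transported to compactness of the adjoint $O\to Z'$, that upgrades weak convergence to the strong dual convergence $\|u_n-u\|_{Z'}\to0$ needed to kill this term. A secondary subtlety is that consistency is only assumed on the dense set $Y^\infty$, so the exact solution $w=Ku\in Y$ must be approximated first, the continuity of $a$ on $Y\times O$ absorbing the perturbation uniformly in $n$.
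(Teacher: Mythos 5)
Your proof is correct, but it is organized quite differently from the paper's. The paper argues directly: it defines a projector $P_n: Z' \to X_n$ by $a_n(P_n u, v) = a(u,v)$ for $v \in X_n$, observes from the weak inf-sup condition that the $P_n$ are uniformly bounded $Z' \to O$, uses consistency on $Y^\infty$ plus density to get $P_n \to I$ pointwise, notes that compactness of $Z \hookrightarrow O$ makes $K: O \to Z'$ compact, and concludes from the factorization $K_n = P_n K$ and the standard lemma that uniformly bounded, pointwise convergent operators converge uniformly on compact operators' ranges. You instead run a contradiction argument with a weakly convergent extraction $u_n \rightharpoonup u$, and the term $\langle u_n - u, v\rangle$ tested against the moving space $X_n$ --- which you kill via Schauder's theorem applied to the adjoint of $Z \hookrightarrow O$ --- has no counterpart in the paper's proof, since the paper never varies the datum $u$ along the sequence. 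The two routes consume the same three ingredients (weak inf-sup against $\|\cdot\|_Z$, consistency on the dense set $Y^\infty$ absorbed by continuity of $a$ on $Y \times O$, and compactness of the intermediate embedding), but the paper's factorization $K_n = P_n K$ is cleaner and immediately reusable (e.g.\ it reappears later as $K - K_n = (I - R_h)K$ when deriving rates), whereas your sequential argument is more elementary in that it never needs to identify the dual space $Z'$ as an actual function space --- you only use the dual norm as a quantity. Both are valid; yours trades the abstract operator-theoretic lemma for an explicit three-term splitting of the residual.
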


\begin{proof}
Define the space $Z'$ to be the dense subspace of $O$ which is dual of $Z$ with respect to $a$. In other words, choose $Z'$ so that bilinear form $a$ extends to a continuous and invertible one on $Z' \times Z$.
For instance, in the case of equality in (\ref{eq:zees}), we would have: 
\begin{equation}
Z' = [O, Y]_{1-s}.
\end{equation}

Define an operator $P_n: Z' \to X_n$, by, for all $v\in X_n$:
\begin{equation}
a_n(P_n u, v) = a( u, v ).
\end{equation}
From the discrete inf-sup condition, the sequence $P_n$ is uniformly bounded $Z' \to O$.

Moreover for $u \in Y^\infty$, choose $u_n \in X_n$ such that $u_n \to u$ in $O$ and (\ref{eq:conan}) holds. We have:
\begin{align}
|P_n u - u_n| \cleq \sup_{v\in X_n} \frac{|a_n(P_n u -u_n, v)|}{\|v \|_Z} = \sup_{v\in X_n} \frac{|a(u,v) - a_n(u_n, v)|}{\|v \|_Z} \to 0.
\end{align}
Hence $P_n u \to u$ in $O$. Combining with the above uniform boundedness, it follows that $P_n \to I$ pointwise, as operators  $Z' \to O$.

Since the injection $Z \to O$ is compact, $K: O \to Z'$ is compact. 

Therefore $K_n = P_n K$ converges to $ K$ in operator norm $O \to O$.
\end{proof}

\paragraph{Perturbations in the second general theory.} 

We consider the second abstract setting, but notations are compatible with the first one also.

In practice the operators $A : Y \to O$ we consider will not be invertible. Even if they were, we would be interested in perturbations of $A$ of the form $A+C$ where $C: O\to O$ is bounded and symmetric. For these, invertibility would not be guaranteed. Notice however that by adding multiples of the identity $I: O \to O$, invertibility may be achieved, and that such perturbations do not alter the eigenmode problem other than by a shift in the eigenvalues.

Let $G$ denote the kernel of $ A$, which, we remind, is finite-dimensional. We suppose that the kernel $G_n$ of $a_n$ satisfies $\hat \delta (G_n, G) \to 0$, in $O$-norm. In particular they must eventually have the same dimension. Notice that for differential forms and the Hodge-Dirac, this hypothesis is guaranteed by Proposition \ref{prop:basic}.

Let $C:O \to O$ be bounded and symmetric. We let $c$ be the associated bilinear form on $O$, defined by:
\begin{equation}
c(u,v) = \langle C u, v \rangle.
\end{equation}
Moreover we suppose that we have symmetric bilinear forms $c_n$ on $X_n$, with a consistency estimate of the form, for all $u,v \in X_h$:
\begin{equation}\label{eq:cnc}
|c(u,v) - c_n(u,v)| \leq \epsilon_n | u | \, \| v \|_Z,
\end{equation}
for some sequence of positive reals $(\epsilon_n)$ converging to $0$.

\begin{proposition}\label{prop:pertac}   We recall the hypothesis $\hat \delta (G_n, G) \to 0$. We denote $\tilde X_n = X_n \cap G_n^\perp$ and suppose that $a_n$ satisfies:
\begin{equation}\label{eq:istt}
\inf_{u\in \tilde X_n}\sup_{v \in X_n} \frac{| a_n (u,v) |}{| u| \|v\|_Z} \cgeq 1.
\end{equation}
Suppose that $A+ C : Y \to O$ is injective. Then we have the discrete weak stability, for $n$ large enough:
\begin{equation}
\inf_{u\in  X_n}\sup_{v \in  X_n} \frac{| a_n (u,v) + c_n(u,v) |}{| u | \|v\|_Z} \cgeq 1.
\end{equation}
\end{proposition}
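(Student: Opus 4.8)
The plan is to argue by contradiction, reducing the weak stability of $a_n + c_n$ to the injectivity of $A + C$ through a compactness argument. First I would note that $C$ restricted to $Y$ is compact $Y \to O$, being the composition of the compact injection $Y \to O$ with the bounded $C$; hence $A + C$ is a compact perturbation of the index-zero Fredholm operator $A$, and injectivity makes it invertible. Suppose now the conclusion fails. Then, after passing to a subsequence, there are $u_n \in X_n$ with $|u_n| = 1$ and
\begin{equation}
\beta_n := \sup_{v \in X_n} \frac{|a_n(u_n, v) + c_n(u_n, v)|}{\| v \|_Z} \to 0.
\end{equation}
Decompose $u_n = u_{0,n} + g_n$ with $u_{0,n} \in \tilde X_n$ and $g_n \in G_n$, orthogonally in $O$. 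Using $\hat \delta(G_n, G) \to 0$ and $\dim G_n = \dim G$, I would write $g_n = \gamma_n + \rho_n$ with $\gamma_n$ the $O$-projection onto the fixed finite-dimensional space $G$ and $|\rho_n| \to 0$; extracting once more, $g_n \to \gamma \in G$ strongly in $O$.

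The crux is to promote the $O$-boundedness of $u_{0,n}$ to strong convergence. Since $g_n \in G_n = \ker a_n$, one has $a_n(u_n, v) = a_n(u_{0,n}, v)$, so for every $v \in X_n$
\begin{equation}
a_n(u_{0,n}, v) = \langle -C u_n, v \rangle + R_n(v), \qquad |R_n(v)| \cleq (\beta_n + \epsilon_n)\, \| v \|_Z,
\end{equation}
where $R_n$ collects the defect in (\ref{eq:cnc}) and the part controlled by $\beta_n$. Let $K_n : O \to \tilde X_n$ be the discrete partial inverse of $a_n$ furnished by (\ref{eq:istt}) (it discards the $G_n$-component of its data and is uniformly bounded $O \to O$); writing $u_{0,n} = K_n(-C u_n) + e_n$, the inf-sup (\ref{eq:istt}) forces $|e_n| \cleq \beta_n + \epsilon_n \to 0$. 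Extract $u_n \rightharpoonup \zeta$ weakly in $O$, so that $-C u_n \rightharpoonup -C \zeta$. The main technical step, which is the analogue of Proposition \ref{prop:compconv} in the presence of the kernel $G$, is that $K_n \to K$ in operator norm $O \to O$, where $K$ is the continuous partial inverse of $A$; this rests on the consistency of Definition \ref{def:con} together with the compactness of $Z \to O$, and, since $K$ is itself compact, it upgrades the weak convergence of the data into strong convergence $K_n(-C u_n) \to -K C \zeta$ in $O$. Combining with $g_n \to \gamma$ gives $u_{0,n} \to -K C \zeta =: \xi$ and hence $u_n \to \zeta = \xi + \gamma$ strongly in $O$, so that $|\zeta| = 1$.

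It remains to identify $\zeta$ as a kernel element of $A + C$. Testing $\beta_n \to 0$ against $v \in G_n$, where $a_n(u_n, v) = 0$, gives $|c_n(u_n, v)| \cleq \beta_n \| v \|_Z$; using the uniform equivalence $\| v \|_Z \ceq |v|$ on the spaces $G_n$ (which are $O$-close to the fixed finite-dimensional $G$), the consistency (\ref{eq:cnc}), and the strong convergence $u_n \to \zeta$, I would pass to the limit to obtain $\langle C \zeta, \eta \rangle = 0$ for all $\eta \in G$, i.e. $P_G C \zeta = 0$. On the other hand $\xi = -K C \zeta$ yields $A \xi = -(I - P_G) C \zeta$, and since $A \gamma = 0$ this gives $A \zeta = A \xi = -C\zeta + P_G C\zeta = -C \zeta$. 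Therefore $(A + C)\zeta = 0$ with $\zeta \in Y$ and $\zeta \neq 0$, contradicting injectivity of $A + C$. The main obstacle is precisely the operator-norm convergence $K_n \to K$ for the partial inverses: the inf-sup (\ref{eq:istt}) controls only $|u_{0,n}|$ and gives no bound on $\| u_{0,n} \|_Z$, so strong $O$-compactness of the sequence cannot be read off directly and must be routed through the consistency of $a_n$ and the compact embedding $Z \to O$.
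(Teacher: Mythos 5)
Your reduction to the injectivity of $A+C$ is routed through the claim that the kernel-aware discrete partial inverses $K_n$ converge in operator norm to the continuous partial inverse $K$; you flag this yourself as ``the main technical step'' but you do not prove it, and it is not available off the shelf: Proposition \ref{prop:compconv} is proved only under the hypothesis that $A$ is injective, which is exactly what fails here. Making your step rigorous would require redefining $K_n$ with a preliminary projection onto $G_n^\perp$ (otherwise the discrete problem $a_n(K_nf,v)=\langle f,v\rangle$ is not solvable, since $a_n(\cdot,g)=0$ for $g\in G_n$), relating $K_n$ to $P_nK$ up to the mismatch between the projections onto $G_n$ and $G$, and controlling that mismatch via the gap hypothesis --- essentially a second, harder version of Proposition \ref{prop:compconv}. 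As it stands this is a genuine gap, not a detail to be filled in later.

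It is also an unnecessary detour. The paper's proof never needs strong convergence of the whole sequence $u_n$ and never constructs a nonzero element of $\ker(A+C)$. It runs as follows: from the two consistency estimates and the weak limit $u$ of $u_n$ one gets $a(u,v)+c(u,v)=0$ for all $v\in Y^\infty$, so injectivity of $A+C$ gives $u=0$ directly. Then $u_n\rightharpoonup 0$ in $O$ together with the compact embedding $Z\to O$ gives $\sup_{v\in X_n}|\langle Cu_n,v\rangle|/\|v\|_Z\to 0$, hence by (\ref{eq:cnc}) also $\sup_{v}|c_n(u_n,v)|/\|v\|_Z\to 0$, hence $\sup_{v}|a_n(u_n,v)|/\|v\|_Z\to 0$. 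Writing $u_n=f_n+g_n$ with $f_n\in\tilde X_n$ and $g_n\in G_n$, the gap condition and the finite-dimensionality of $G$ force $g_n\to 0$ strongly, so $|f_n|\to 1$ while $a_n(f_n,\cdot)=a_n(u_n,\cdot)$ tends to zero in the $Z$-dual norm; this contradicts (\ref{eq:istt}). Your observation that (\ref{eq:istt}) gives no $Z$-norm control of $u_{0,n}$ is correct, but the conclusion you draw from it --- that strong $O$-compactness must be recovered through operator-norm convergence of partial inverses --- is not: the contradiction is obtained against the inf-sup condition itself, for which the $O$-norm information $|f_n|\to 1$ suffices.
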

\begin{proof}
We proceed by contradiction, and suppose the condition is not satisfied. Choose a subsequence $u_n \in X_n$ such that $ | u_n |  = 1$, and:
\begin{equation}
\sup_{v \in  X_n} \frac{| a_n (u_n,v) + c_n(u_n,v) |}{ \|v\|_Z} \to 0.
\end{equation}
Upon extracting subsequences, we may suppose in addition that $(u_n)$ converges weakly in $O$ to some $u\in O$. 
It follows from the two consistency estimates that for $v \in Y^\infty$:
\begin{equation}
a(u, v) + c(u,v) = 0.
\end{equation}
Therefore $u=0$. 

By consistency (\ref{eq:cnc}) we deduce:
\begin{equation}
\sup_{v \in X_n} \frac{|c_n(u_n, v)|}{\| v\|_Z} \to 0,
\end{equation} 
hence:
\begin{equation}
\sup_{v \in  X_n} \frac{| a_n (u_n,v) |}{ \|v\|_Z} \to 0.
\end{equation}

We may decompose:
\begin{equation}
u_n = f_n + g_n,
\end{equation}
with $f_n \in \tilde X_n$ and $g_n \in G_n$.

Since $\hat \delta (G_n, G) \to 0$, we get that both $f_n$ and $g_n$ converge weakly to $0$ in $O$. Actually $g_n$ converges strongly in $O$, since $G$ is finite-dimensional.

So $|f_n| \to 1 $ and:
\begin{equation}
\sup_{v \in  X_n} \frac{| a_n (f_n,v) |}{ \|v\|_Z} \to 0.
\end{equation}
This contradicts (\ref{eq:istt}).
\end{proof}

In the above considerations we could replace boundedness of $C:O \to O$ by boundedness $C:Z \to O$, which is somewhat weaker. 

\begin{remark}
It can be noticed that the principles behind Proposition \ref{prop:pertac} are that, under suitable conditions, firstly, inf-sup conditions on the orthogonal of some well-behaved finite dimensional subspaces provide full inf-sup conditions for injective operators, and secondly, the property of having an inf-sup condition on the orthogonal of the kernel is stable under compact perturbations. Variants of these principles were developed under slightly different hypotheses in \cite{Chr04MC}, in particular we did not consider weak norms there. On the other hand, the use of weak norms is in \cite{Chr11NM}, but there special care was given to the fact that the kernels were infinite-dimensional.
\end{remark}

\paragraph{Strengthened stability.}

The previous two paragraphs are natural from the point of view of establishing the convergence $K_h \to K$ in the operator norm $O \to O$, as in Proposition \ref{prop:compconv}. However, to study the forward problem in terms of stability and convergence rates, the techniques yield suboptimal results. In this paragraph we indicate how an almost optimal stability estimate can be obtained. It is the basis for establishing almost optimal convergence rates for the forward problem, which in turn provide convergence rates for the eigenvalue problem \cite{BabOsb89}\cite{BanOsb90}\cite{BabOsb94}. 

The functional setting is as in the second general theory.  We suppose that we have a space $Z$, intermediate between $O$ and $Y$, with continuous inclusions and such that the injection $Z \to O$ is compact. We also suppose that we have finite dimensional subspaces $X_n$ of $Z$, equipped with symmetric bilinear forms $a_n$ and $c_n$.

We make the following hypotheses:
\begin{itemize}
\item There is a dense subset $Y^\infty$ of $Y$ such that for all $u \in Y^\infty$ there is a sequence $u_n \in X_n$ such that $u_n \to u$ in $Z$ (not only $O$ as in Definition \ref{def:con}) and:
\begin{equation}\label{eq:conan}
\lim_{n \to \infty} \sup_{v \in X_n}\frac{|a(u, v) - a_n(u_n, v)|}{\| v \|_Z} = 0.
\end{equation}

\item As before the kernel of $a$ is denoted $G$, the kernel of $a_n$ is denoted $G_n$. We suppose that 
$\delta(G_n,G) \to 0$ in the norm of $Z$ (before the norm of $O$ was used).

\item The bilinear form $c_n$ is consistent with $c$ and in the sense that for a sequence $(\epsilon_n)$ of positive reals converging to $0$ we have, for all $u,v \in X_h$:
\begin{equation}\label{eq:cncw}
|c(u,v) - c_n(u,v)| \leq \epsilon_n \| u \|_Z \, \| v \|_Z.
\end{equation}
This condition is weaker than (\ref{eq:cnc}).
\end{itemize}

Under these circumstances we obtain the following variant of Proposition \ref{prop:pertac}. The proof is almost identical.
\begin{proposition}\label{prop:pertacm} 
We denote $\tilde X_n = X_n \cap G_n^\perp$ and suppose that $a_n$ satisfies:
\begin{equation}
\inf_{u\in \tilde X_n}\sup_{v \in X_n} \frac{| a_n (u,v) |}{\| u\|_Z \|v\|_Z} \cgeq 1.
\end{equation}
Suppose that $A + C : Y \to O$ is injective. Then we have, for $n$ large enough:
\begin{equation}
\inf_{u\in  X_n}\sup_{v \in  X_n} \frac{| a_n (u,v) + c_n(u,v) |}{\| u \|_Z \|v\|_Z} \cgeq 1.
\end{equation}
\end{proposition}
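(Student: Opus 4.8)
The plan is to follow the contradiction argument of Proposition \ref{prop:pertac} almost verbatim, replacing the $O$-norm by the $Z$-norm in the normalization and in the treatment of the kernels. First I would assume the conclusion fails, so that along a subsequence there are $u_n \in X_n$ with $\|u_n\|_Z = 1$ and $\sup_{v \in X_n}|a_n(u_n,v)+c_n(u_n,v)|/\|v\|_Z \to 0$. Since $(u_n)$ is bounded in $Z$, I extract a further subsequence converging weakly in $Z$ to some $u$; by compactness of the injection $Z \to O$ this subsequence converges strongly to $u$ in $O$.

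Next I would identify $u$ by passing to the limit. Fix $v \in Y^\infty$ and let $v_n \in X_n$ be the approximating sequence furnished by the strengthened consistency hypothesis, so that $v_n \to v$ in $Z$; this $Z$-convergence, rather than mere $O$-convergence, is exactly what keeps $\|v_n\|_Z$ bounded and drives the argument. Testing the contradiction hypothesis against $v_n$ gives $a_n(u_n,v_n)+c_n(u_n,v_n)\to 0$. Using symmetry of $a$ and $a_n$ together with (\ref{eq:conan}) and $\|u_n\|_Z=1$, I obtain $a_n(u_n,v_n)=a(u_n,v)+o(1)$; using (\ref{eq:cncw}) and the $O$-continuity of $c$ together with $u_n \to u$, $v_n \to v$ in $O$, I obtain $c_n(u_n,v_n)=c(u,v)+o(1)$. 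Since $a(\cdot,v)$ extends continuously to $O$ for $v\in Y$, also $a(u_n,v)\to a(u,v)$, whence $a(u,v)+c(u,v)=0$ for all $v\in Y^\infty$, and by density for all $v\in Y$. Because $C:O\to O$ is bounded and $Y\to O$ compact, $C:Y\to O$ is compact, so $A+C$ is Fredholm of index zero; being injective it is surjective, and this forces $u=0$.

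Knowing now that $u_n \to 0$ in $O$, I would discard the perturbation: boundedness of $C$ and the continuous inclusion $Z\to O$ give $\sup_{v}|c(u_n,v)|/\|v\|_Z \cleq |u_n|\to 0$, while (\ref{eq:cncw}) gives $\sup_{v}|c_n(u_n,v)-c(u_n,v)|/\|v\|_Z \leq \epsilon_n \to 0$; together these yield $\sup_{v}|a_n(u_n,v)|/\|v\|_Z\to 0$. I then decompose $u_n = f_n + g_n$ with $f_n\in\tilde X_n$ and $g_n\in G_n$, orthogonally in $O$, so that $|g_n|\leq|u_n|\to 0$. Since $g_n$ lies in the kernel of $a_n$, I have $a_n(f_n,v)=a_n(u_n,v)$, hence $\sup_{v}|a_n(f_n,v)|/\|v\|_Z\to 0$. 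It remains to contradict the assumed inf-sup estimate on $\tilde X_n$, for which I must bound $\|f_n\|_Z$ away from zero.

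The main obstacle is precisely this last point, namely upgrading $g_n\to 0$ from the $O$-norm to the $Z$-norm, equivalently showing $\|f_n\|_Z\to 1$; this is where the hypothesis $\delta(G_n,G)\to 0$ \emph{in the $Z$-norm}, rather than in $O$, is indispensable. I would argue by contradiction: if $\|g_n\|_Z\not\to 0$, then along a subsequence $\|g_n\|_Z$ is bounded below, and setting $\hat g_n=g_n/\|g_n\|_Z$ the $Z$-gap hypothesis produces $\gamma_n\in G$ with $\|\hat g_n-\gamma_n\|_Z\to 0$. As $G$ is finite-dimensional, a further subsequence gives $\gamma_n\to\gamma\in G$ in $Z$, hence in $O$, with $\|\gamma\|_Z=1$; but $|\hat g_n|=|g_n|/\|g_n\|_Z\to 0$ forces $\gamma=0$, a contradiction. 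Thus $g_n\to 0$ in $Z$, so $\|f_n\|_Z\to 1$, and $\sup_{v}|a_n(f_n,v)|/(\|f_n\|_Z\|v\|_Z)\to 0$ contradicts the assumed inf-sup bound on $\tilde X_n$, completing the proof.
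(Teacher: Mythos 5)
Your proof is correct and follows essentially the same contradiction argument as the paper: normalize in $Z$, extract a weak limit, use consistency and injectivity of $A+C$ to show the limit vanishes, strip off the perturbation, and then use the $Z$-gap of the kernels to transfer the inf-sup violation to $\tilde X_n$. The only difference is that you spell out in full the step where $\|g_n\|_Z \to 0$ is deduced from $|g_n| \to 0$ and $\delta(G_n,G)\to 0$ in $Z$, which the paper dispatches in one sentence; your more explicit treatment is a welcome clarification rather than a deviation.
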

\begin{proof} 
We proceed by contradiction, and suppose the condition is not satisfied. Choose a subsequence $u_n \in X_n$ such that $ \| u_n \|_Z  = 1$, and:
\begin{equation}
\sup_{v \in  X_n} \frac{| a_n (u_n,v) + c_n(u_n,v) |}{ \|v\|_Z} \to 0.
\end{equation}
Upon extracting subsequences, we may suppose in addition that $(u_n)$ converges weakly in $Z$ to some $u\in Z$. 
It follows from the consistency estimates that for $v \in Y^\infty$:
\begin{equation}
a(u, v) + c(u,v) = 0.
\end{equation}
Therefore $u=0$. 

By consistency (\ref{eq:cncw}) we deduce:
\begin{equation}
\sup_{v \in X_n} \frac{|c_n(u_n, v)|}{\| v\|_Z} \to 0,
\end{equation} 
hence:
\begin{equation}
\sup_{v \in  X_n} \frac{| a_n (u_n,v) |}{ \|v\|_Z} \to 0.
\end{equation}

We may decompose:
\begin{equation}
u_n = f_n + g_n,
\end{equation}
with $f_n \in \tilde X_n$ and $g_n \in G_n$.

Since $\hat \delta (G_n, G) \to 0$ in $Z$, we get that both $f_n$ and $g_n$ converge weakly to $0$ in $Z$. Actually $g_n$ converges strongly in $Z$, since $G$ is finite-dimensional.

So $\| f_n\|_Z \to 1 $ and:
\begin{equation}
\sup_{v \in  X_n} \frac{| a_n (f_n,v) |}{ \|v\|_Z} \to 0.
\end{equation}
This contradicts (\ref{eq:istt}).
\end{proof}

\section{Concrete discretization of the Hodge-Dirac \label{sec:conc}}

In this section we discretize the Hodge-Dirac operator of Section \ref{sec:func}, with differential forms as detailed in Section \ref{sec:rev}. We adopt notations both from the concrete setting of Section \ref{sec:rev} and the abstract setting of Section \ref{sec:hoddir}.  In particular, the abstract setting is employed with the choices:
\begin{align}
O & = \rmL^2 \alter^\bs(S),\\
X & = \{ u \in O \ : \ \rmd u \in O \},\\
Y & = \rmH^1 \alter^\bs(S),
\end{align}
For the space $Z$ several choices will be relevant. See Remark \ref{rem:choicez} for a list of choices.

Recall that:
\begin{equation}
a(u,v) = \langle \rmd u , v \rangle + \langle u, \rmd v\rangle.
\end{equation}
Since $X_h$ is a subspace of $X$ and $a$ is continuous on $X$ we can let $a_h$ be the restriction of $a$ to $X_h$.

We first obtain convergence for the eigenmode problem, allowing for zero order perturbations of $a$. Then we establish convergence rates.

\paragraph{Convergence.} Recall the definition of the seminorm (\ref{eq:ysem}). The following proposition is almost tautological.
\begin{proposition} \label{prop:discis} Define $\tilde X_h = X_h \cap G_h^\perp$. We have an estimate:
\begin{equation}
1 \cleq \infsup{u \in  \tilde X_h}{v\in \tilde X_h}{|a_h(u,v)|}{|u|\, [v]_h}. 
\end{equation}
\end{proposition}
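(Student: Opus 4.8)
The plan is to recognise the domain seminorm $[\,\cdot\,]_h$ as the $\rmL^2$-graph norm of the discrete Hodge--Dirac operator, after which the estimate follows from self-adjointness and Cauchy--Schwarz, with optimal constant $1$ (hence the qualifier ``almost tautological''). First I would introduce the discrete Hodge--Dirac operator $A_h\colon X_h\to X_h$ determined by $\langle A_h u,v\rangle = a_h(u,v)$ for all $u,v\in X_h$. Because $\rmd$ maps $X_h$ into $X_h$, it splits as $A_h u = \rmd u + \rmd^\star_h u$, where the discrete coderivative $\rmd^\star_h u\in X_h$ is characterised by $\langle \rmd^\star_h u,w\rangle = \langle u,\rmd w\rangle$ for all $w\in X_h$; in particular $\sup_{w\in X_h}|\langle u,\rmd w\rangle|/|w| = |\rmd^\star_h u|$. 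The form $a$ being symmetric, $A_h$ is symmetric on $X_h$.

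Then I would record the two structural facts I need, both consequences of the discrete Hodge decomposition (Proposition \ref{prop:basic}). Since $\rmd u\in W_h$ while $\rmd^\star_h u\in V_h$ (because $\langle \rmd^\star_h u,w\rangle=0$ for $w\in W_h$), these two components are $O$-orthogonal, and Pythagoras yields the key identity
\[
[u]_h^2 = |\rmd u|^2 + |\rmd^\star_h u|^2 = |A_h u|^2,
\qquad\text{i.e.}\qquad [u]_h = |A_h u|.
\]
Moreover $A_h$ maps $\tilde X_h = X_h\cap G_h^\perp$ into itself: for $u\in X_h$ and $g\in G_h$ one has $a_h(u,g)=\langle \rmd u,g\rangle = 0$, since $\rmd g=0$ and $g\perp \rmd X_h$. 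It is also injective on $\tilde X_h$, as $A_h u=0$ forces $a_h(u,\cdot)=0$ on $X_h$, whence $u\in G_h\cap\tilde X_h=\{0\}$. Being a finite-dimensional endomorphism, $A_h\colon \tilde X_h\to\tilde X_h$ is therefore a bijection.

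With these in hand the inf--sup is immediate. For the upper bound, symmetry and Cauchy--Schwarz give, for all $u,v\in\tilde X_h$,
\[
|a_h(u,v)| = |\langle u, A_h v\rangle| \leq |u|\,|A_h v| = |u|\,[v]_h ,
\]
so every quotient is bounded by $1$. For the matching lower bound, given $u\in\tilde X_h$ I would use surjectivity to pick $v\in\tilde X_h$ with $A_h v=u$; then $[v]_h=|A_h v|=|u|$ while $a_h(u,v)=\langle A_h u,v\rangle=\langle u,A_h v\rangle=|u|^2$, so the quotient equals $1$. Hence the inner supremum is exactly $1$ for every $u$, and the claimed lower bound $1\cleq \inf_u\sup_v$ holds (in fact with equality).

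The only delicate point --- the ``obstacle,'' such as it is --- is the identification $[u]_h=|A_h u|$ and the nondegeneracy of $a_h$ on $\tilde X_h$, both of which rest on the discrete Hodge decomposition and on $G_h$ being exactly the kernel of $a_h$. Once the domain seminorm is recognised as the graph norm $|A_h\,\cdot\,|$, the indefiniteness of $a_h$ is irrelevant and the constant is sharp.
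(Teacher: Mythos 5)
Your proof is correct and follows essentially the same route as the paper: both rest on the identity $[w]_h = |\rmd w|^2 + |\rmd^\star_h w|^2 = |A_h w|^2$ (via $O$-orthogonality of $\rmd X_h \subseteq W_h$ and $V_h$) together with the symmetry of $a_h$ and the bijectivity of the discrete Hodge--Dirac operator on $\tilde X_h$. The only cosmetic difference is that the paper tests with $v = A_h u$, obtaining the transposed inf--sup and then invoking equality of the norms of a map and its transpose, whereas you solve $A_h v = u$ and verify the stated inf--sup directly.
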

\begin{proof} 
We define an operator $\rmd^\star_h : O \to V_h$ by requiring, for $u\in O$ and $v \in V_h$:
\begin{equation}
\langle \rmd^\star_h u, v\rangle = \langle u, \rmd v\rangle .
\end{equation}
Notice that with this definition, the equality then holds for $v \in X_h$. We also notice that:
\begin{equation}
 \sup_{v \in X_h} \frac{ |\langle u,  \rmd v\rangle | }{|v|} = |\rmd_h^\star u|.
 \end{equation}
 
Choose $u \in X_h$. Let $v = \rmd u + \rmd^\star_h u$. 
We have:
\begin{align}
\frac{a(u,v)}{|v|} & = \frac{|\rmd u|^2 + | \rmd^\star_h u|^2}{( |\rmd u|^2 + | \rmd^\star_h u|^2 )^{1/2}} = [u]_h.
\end{align}
One concludes by restricting attention to $\tilde X_h$ and using the fact that a map and its transpose have the same norm.
\end{proof}
One could say that the whole point of Section \ref{sec:frac} was to prove that the discrete seminorm (\ref{eq:ysem}) dominates a sufficiently strong true seminorm, namely the Slobodetskij one. Here is a precise statement.

\begin{proposition}\label{prop:discdomx}
For $0< s <1/2$ we have an estimate, for $u\in X_h \cap G_h^\perp$:
\begin{equation}\label{eq:firest}
\lfloor u \rfloor_s \cleq [u]_h.
\end{equation}
and for $u \in X_h$:
\begin{equation}\label{eq:secest}
\lfloor u \rfloor_s \cleq  |u| + [u]_h.
\end{equation}
\end{proposition}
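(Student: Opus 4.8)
The two bounds are closely linked: inequality (\ref{eq:secest}) is the master estimate, valid on all of $X_h$, while (\ref{eq:firest}) will follow from it by absorbing the $|u|$ term, using the fact that $[\cdot]_h$ controls the $\rmL^2$-norm on $G_h^\perp$. The plan is therefore to prove (\ref{eq:secest}) first and to deduce (\ref{eq:firest}) as a corollary. No new analytic ingredient is needed: the whole point of Section \ref{sec:frac} was precisely to produce a genuine fractional seminorm dominated by the discrete data, and that work is encapsulated in Proposition \ref{prop:discint}.

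For (\ref{eq:secest}), I would start from the discrete interpolation estimate of Proposition \ref{prop:discint}, namely $\lfloor u \rfloor_s \cleq |u|^{1-s} \lceil u \rceil_h^s$, and then replace $\lceil u \rceil_h$ by $[u]_h$ via the norm equivalence of Proposition \ref{prop:honedom}, obtaining $\lfloor u \rfloor_s \cleq |u|^{1-s}[u]_h^s$. It remains only to split the geometric mean into a sum, which is immediate from Young's inequality with conjugate exponents $1/(1-s)$ and $1/s$: one has $|u|^{1-s}[u]_h^s \leq (1-s)|u| + s[u]_h \leq |u| + [u]_h$. This step is entirely mechanical.

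For (\ref{eq:firest}), restricting to $u \in X_h \cap G_h^\perp$, I would use the domination $|u| \cleq [u]_h$ recorded right after the definition (\ref{eq:ysem}), which follows from Proposition \ref{prop:basic}. Feeding this into (\ref{eq:secest}) immediately absorbs the first summand, giving $\lfloor u \rfloor_s \cleq |u| + [u]_h \cleq [u]_h$. Should one prefer to argue the domination directly rather than cite it, the route is to take the discrete Hodge decomposition $u = v + \rmd v'$ with $v, v' \in V_h$ --- there is no $G_h$-component because $u \perp G_h$ --- and then to bound each piece separately. The Poincar\'e-Friedrichs estimate gives $|v| \cleq |\rmd v| = |\rmd u| \leq [u]_h$ for the solenoidal part, while for the exact part one tests the coderivative supremum against $v'$ itself and uses $|v'| \cleq |\rmd v'|$ to conclude $|\rmd v'| \cleq \sup_{w \in X_h} |\langle u, \rmd w\rangle|/|w| \leq [u]_h$; orthogonality of the two pieces then yields $|u| \cleq [u]_h$.

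The honest assessment is that there is no real obstacle here. The only step with any content is the $\rmL^2$-domination on $G_h^\perp$, and even that is standard, being the discrete Poincar\'e-Friedrichs inequality in disguise; everything else is a bookkeeping combination of Propositions \ref{prop:discint}, \ref{prop:honedom} and \ref{prop:basic} with an elementary Young inequality.
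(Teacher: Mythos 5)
Your proof is correct, but it follows a genuinely different route from the paper's. The paper proves the proposition by writing the discrete Hodge decomposition $u = \rmd w + v + g$ with $w,v \in V_h$, $g \in G_h$, and applying three separate fractional estimates to the pieces: Proposition \ref{prop:discdomw} to $\rmd w$, Proposition \ref{prop:discdomv} to $v$, and Proposition \ref{prop:discdomb} to $g$; the orthogonality relations $\langle v, \rmd u'\rangle = \langle g, \rmd u'\rangle = 0$ convert the component-wise bounds into bounds by $[u]_h$ and $|u|$, and (\ref{eq:firest}) is read off by setting $g=0$. You bypass the decomposition entirely by chaining Proposition \ref{prop:discint} with the norm equivalence $\lceil u \rceil_h \ceq [u]_h$ of Proposition \ref{prop:honedom} and Young's inequality, and your absorption of $|u|$ on $G_h^\perp$ via the discrete Poincar\'e--Friedrichs inequality matches the remark following (\ref{eq:ysem}). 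Both arguments are sound and rely only on previously established results; indeed the paper itself deploys exactly your combination later, in Proposition \ref{prop:zdhalf}, where the multiplicative bound $\lfloor u \rfloor_s \cleq |u|^{1-s}[u]_h^{s}$ that your argument produces as an intermediate step --- and which is strictly stronger than the additive estimate (\ref{eq:secest}) --- is what is actually needed. The trade-off is robustness: your shortcut routes everything through Proposition \ref{prop:honedom}, whose proof explicitly skips the general case for the $G_h$ component and falls back on the flat-torus structure, whereas the paper's component-wise argument uses Proposition \ref{prop:discdomb}, which is stated to be designed to extend to general $S$.
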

\begin{proof}
Choose $u \in X_h$. We write the discrete Hodge decomposition:
\begin{equation}
 u = \rmd w  + v + g, \textrm{ with } w \in V_h, v \in V_h \textrm{ and } g \in G_h. 
\end{equation}
Then we combine Propositions \ref{prop:discdomv}, \ref{prop:discdomb} and \ref{prop:discdomw}, as follows. We estimate:
\begin{align}
\lfloor u \rfloor_s^2 & \leq 3 \lfloor \rmd w \rfloor_s^2 + 3\lfloor v \rfloor_s^2 + 3\lfloor g \rfloor_s^2,\\
& \cleq (\sup_{u' \in X_h} \frac{| \langle \rmd w, \rmd u'\rangle |}{|u'|})^2 + |\rmd v|^2 + |u|^2,\\
& \cleq  (\sup_{u' \in X_h} \frac{| \langle u, \rmd u'\rangle | }{|u'|})^2 + |\rmd u|^2 + |u|^2, \\
&  \cleq [u]_h^2 + |u|^2.
\end{align}
This gives the second estimate (\ref{eq:secest}). To obtain the first estimate (\ref{eq:firest}) one supposes $g=0$ in the above computations.
\end{proof}

We deduce a stability estimate in the following form.
\begin{proposition} \label{prop:discisoz}
For $0 < s < 1/2$, the following weak inf-sup condition holds:
\begin{equation}
1 \cleq \infsup{u \in \tilde X_h}{v\in \tilde X_h}{|a_h(u,v)|}{|u|\, ( |v| + \lfloor v \rfloor_s)}. 
\end{equation}
\end{proposition}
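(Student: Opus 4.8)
The plan is to obtain this weak inf-sup condition as an immediate corollary of Proposition \ref{prop:discis}, where the same inf-sup quantity is established but with the stronger test-norm $[v]_h$ in the denominator. The guiding principle is that shrinking the denominator can only enlarge the inf-sup quotient, so it suffices to dominate the weaker quantity $|v| + \lfloor v \rfloor_s$ by $[v]_h$ on the space $\tilde X_h$ over which trial and test functions range.

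The key step is therefore the seminorm comparison
\begin{equation}
|v| + \lfloor v \rfloor_s \cleq [v]_h \quad \text{for all } v \in \tilde X_h,
\end{equation}
which I would assemble from two ingredients already at hand. For the fractional term, the first estimate (\ref{eq:firest}) of Proposition \ref{prop:discdomx} gives exactly $\lfloor v \rfloor_s \cleq [v]_h$ on $X_h \cap G_h^\perp = \tilde X_h$. For the $\rmL^2$ term, one invokes the remark accompanying the definition (\ref{eq:ysem}): on $X_h \cap G_h^\perp$ the seminorm $[\cdot]_h$ is in fact a norm dominating $|\cdot|$, a consequence of the stability of the discrete Hodge decomposition in Proposition \ref{prop:basic}. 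Adding the two bounds yields the displayed comparison.

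With this comparison in hand the conclusion is immediate. For $u, v \in \tilde X_h$ one has $|u|\,(|v| + \lfloor v \rfloor_s) \cleq |u|\,[v]_h$, hence
\begin{equation}
\frac{|a_h(u,v)|}{|u|\,(|v| + \lfloor v \rfloor_s)} \cgeq \frac{|a_h(u,v)|}{|u|\,[v]_h}.
\end{equation}
Taking the supremum over $v \in \tilde X_h$ and then the infimum over $u \in \tilde X_h$, Proposition \ref{prop:discis} bounds the right-hand side below by a positive constant, which is precisely the asserted estimate. I expect no genuine obstacle here, as the statement is a corollary of earlier results; the only point deserving attention is that the bound $|v| \cleq [v]_h$ holds only after projecting away $G_h$, which is exactly why both the trial and the test spaces are restricted to $\tilde X_h = X_h \cap G_h^\perp$ rather than to all of $X_h$.
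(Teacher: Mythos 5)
Your argument is correct and is exactly the paper's route: the paper's proof of Proposition \ref{prop:discisoz} reads simply ``From Propositions \ref{prop:discis} and \ref{prop:discdomx}'', i.e.\ the inf-sup with denominator $[v]_h$ combined with the domination $|v| + \lfloor v \rfloor_s \cleq [v]_h$ on $\tilde X_h$. Your write-up just makes explicit the $\rmL^2$ half of that domination (the remark after (\ref{eq:ysem}) relying on Proposition \ref{prop:basic}), which the paper leaves implicit.
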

\begin{proof}
From Propositions \ref{prop:discis} and \ref{prop:discdomx}.
\end{proof}

We also check the consistency requirement of the abstract theory.

\begin{proposition} Consistence in the sense of Definition \ref{def:con} holds. For $u\in \rmH^1\alter^\bs(S)$ such that $\rmd u \in \rmH^1\alter^\bs(S)$ we may find $u_h \in X_h$ such that:
\begin{equation}
\sup_{v \in X_h} \frac{|a(u,v) - a_h(u_h, v)|}{|v| + \lfloor v \rfloor_s} \to 0.
\end{equation}
\end{proposition}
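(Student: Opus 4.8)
The plan is to take $u_h = \Pi_h u$, the smoothed projection of Proposition \ref{prop:l2stab}, and to exploit its two defining virtues: it commutes with $\rmd$, so $\rmd \Pi_h u = \Pi_h \rmd u$, and it enjoys optimal $\rmL^2$ approximation orders. The density requirement of Definition \ref{def:con} is met by taking $Y^\infty = \{ u \in \rmH^1\alter^\bs(S) : \rmd u \in \rmH^1\alter^\bs(S) \}$, which contains $C^\infty\alter^\bs(S)$ and is therefore dense in $Y = \rmH^1\alter^\bs(S)$; moreover the required convergence $u_h \to u$ in $O$ is immediate from $|u - \Pi_h u| \cleq h |\nabla u| \to 0$.

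Next I would expand the consistency error. Since $a_h$ is the restriction of $a$ to $X_h$ and $\rmd \Pi_h u = \Pi_h \rmd u$, for every $v \in X_h$ we have
\begin{equation}
a(u,v) - a_h(\Pi_h u, v) = \langle \rmd u - \Pi_h \rmd u, v \rangle + \langle u - \Pi_h u, \rmd v \rangle .
\end{equation}
The two terms are controlled by genuinely different mechanisms, so I would treat them separately.

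For the first term I use the hypothesis $\rmd u \in \rmH^1\alter^\bs(S)$ together with the optimal $\rmL^2$ approximation order of $\Pi_h$ applied to $\rmd u$ (the case $p=0$), which gives $|\rmd u - \Pi_h \rmd u| \cleq h |\nabla \rmd u|$; hence by Cauchy--Schwarz
\begin{equation}
|\langle \rmd u - \Pi_h \rmd u, v \rangle| \cleq h |\nabla \rmd u| \, |v| .
\end{equation}
The second term is the crux, because $\rmd v$ is not bounded in $\rmL^2$ uniformly in $h$ for $v \in X_h$ — this is exactly what forces the weak denominator $|v| + \lfloor v \rfloor_s$. Here I would pair the $\rmL^2$ estimate $|u - \Pi_h u| \cleq h |\nabla u|$ against the inverse inequality of Proposition \ref{prop:invds}, namely $|\rmd v| \cleq h^{s-1} \lfloor v \rfloor_s$, to obtain
\begin{equation}
|\langle u - \Pi_h u, \rmd v \rangle| \leq |u - \Pi_h u| \, |\rmd v| \cleq h^{s} |\nabla u| \, \lfloor v \rfloor_s .
\end{equation}
The point is that the destabilizing factor from $|\rmd v|$ is only $h^{s-1}$, so a single power of $h$ from the approximation error leaves a strictly positive residual power $h^{s}$.

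Combining the two bounds and using $|v| \leq |v| + \lfloor v \rfloor_s$ and $\lfloor v \rfloor_s \leq |v| + \lfloor v \rfloor_s$, I conclude
\begin{equation}
\sup_{v \in X_h} \frac{|a(u,v) - a_h(\Pi_h u, v)|}{|v| + \lfloor v \rfloor_s} \cleq h |\nabla \rmd u| + h^{s} |\nabla u| \to 0
\end{equation}
as $h \to 0$, which is the assertion. The only genuine obstacle is the second term; once one recognizes that Proposition \ref{prop:invds} trades the inverse factor $h^{-1}$ for the fractional seminorm $\lfloor v \rfloor_s$ with a strictly positive leftover power $h^{s}$, the estimate closes, and the role of the hypothesis $\rmd u \in \rmH^1$ is precisely to make the first term vanish in the plain $\rmL^2$ pairing.
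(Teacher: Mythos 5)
Your proposal is correct and follows essentially the same route as the paper: the choice $u_h = \Pi_h u$, the same splitting of $a(u,v)-a_h(u_h,v)$ into the two pairings, the $\rmL^2$ approximation order for $u$ and $\rmd u$, and the inverse inequality of Proposition \ref{prop:invds} to convert $|\rmd v|$ into $h^{s-1}\lfloor v\rfloor_s$, yielding the same bound $h|v|+h^s\lfloor v\rfloor_s$. The only difference is cosmetic: you spell out the density of $Y^\infty$ and apply the inverse inequality term by term, whereas the paper first collects $h(|v|+|\rmd v|)$ and then substitutes.
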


\begin{proof}
For such $u$ define $u_h = \Pi_h u \in X_h$. Since the local spaces attached to cells contain the constants, we have convergence rates:
\begin{align}
|u - u_h|  & \cleq h, \\
 |\rmd u - \rmd u_h|&  \cleq h.
\end{align}
For any $v \in X_h$ we have:
\begin{equation}
a(u,v) - a_h(u_h, v) = \langle \rmd u -\rmd u_h,  v \rangle + \langle u - u_h,  \rmd v \rangle ,
\end{equation}
hence:
\begin{equation}
|a(u,v) - a_h(u_h, v)| \cleq  h ( | v |  + | \rmd v | ).
\end{equation}
Using Proposition \ref{prop:invds} we deduce:
\begin{equation}
|a(u,v) - a_h(u_n, v) | \cleq h |v| + h^s \lfloor v \rfloor_s.
\end{equation}
This concludes the proof.
\end{proof}

Finally, combing all the above results, we apply the general perturbation theory with the choice:
\begin{equation}
Z = \rmH^{s}\alter^\bs(S),
\end{equation}
with $s \in ]0, 1/2[$. We arrive obtain:

\begin{corollary}
Denote by $A:Y \to O$ the Hodge-Dirac operator. Consider a variational discretization with differential forms, as discussed above. Let $C:O \to O$ be a bounded self-adjoint operator. It could represent for instance a smooth enough electromagnetic field in the Dirac equation. Consider a consistent variational discretization of $C$ by symmetric bilinear forms $c_h$ on $X_h$. Choose $\mu \in \bbR$ such that $A + C + \mu I$ is injective. Then the associated discrete eigenmodes converge, in the sense given by Proposition \ref{prop:compconv}.
\end{corollary}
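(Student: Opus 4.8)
The plan is to realize the hypotheses of the perturbation framework in the concrete Hodge-Dirac setting and then invoke Proposition~\ref{prop:compconv}. Throughout I take $Z = \rmH^s\alter^\bs(S)$ with a fixed $s$, $0 < s < 1/2$, so that $\| v \|_Z \approx |v| + \lfloor v \rfloor_s$. The first move is to absorb the shift into the zero-order term: put $\tilde C = C + \mu I$ and $\tilde c_h(u,v) = c_h(u,v) + \mu \langle u, v\rangle$. Since $\mu I$ is bounded and self-adjoint on $O$ and is reproduced exactly on $X_h$, the operator $\tilde C$ is again bounded self-adjoint, $\tilde c_h$ is consistent with $\tilde C$ whenever $c_h$ is consistent with $C$, and $A + \tilde C = A + C + \mu I$ is injective by the choice of $\mu$ (such $\mu$ exists because $A$ has compact resolvent, $Y \hookrightarrow O$ being compact, so that $A + C$ is self-adjoint with purely discrete spectrum). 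This reduces everything to the injective case treated abstractly.

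Next I would assemble the structural inputs to Proposition~\ref{prop:pertac}. The cohomology gap $\hat\delta(G_h, G) \to 0$ in the $O$-norm is furnished by Proposition~\ref{prop:basic}(ii) (on the flat torus it even vanishes, Remark~\ref{rem:ggh}). The inf-sup hypothesis (\ref{eq:istt}), namely $\inf_{u \in \tilde X_h} \sup_{v \in X_h} |a_h(u,v)| / (|u|\,\|v\|_Z) \cgeq 1$ on $\tilde X_h = X_h \cap G_h^\perp$, is exactly Proposition~\ref{prop:discisoz}, since the supremum there is taken over the smaller set $\tilde X_h$ and so a fortiori holds with the supremum over $X_h$. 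With these in hand, Proposition~\ref{prop:pertac} applied to the pair $(a_h, \tilde c_h)$ delivers, for $h$ small enough, the full discrete weak stability $\inf_{u \in X_h}\sup_{v \in X_h} |a_h(u,v) + \tilde c_h(u,v)| / (|u|\,\|v\|_Z) \cgeq 1$, which is the stability of Definition~\ref{def:stab} for the combined form.

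It remains to verify consistency (Definition~\ref{def:con}) for $a_h + \tilde c_h$. On the dense set of $u \in \rmH^1\alter^\bs(S)$ with $\rmd u \in \rmH^1\alter^\bs(S)$ I take $u_h = \Pi_h u$, which converges to $u$ in $O$. The $a$-contribution is controlled by the consistency proposition established just above. For the zero-order part I split $\tilde c(u,v) - \tilde c_h(u_h, v)$ as $c(u - u_h, v)$ plus $(c - c_h)(u_h, v)$ plus $\mu\langle u - u_h, v\rangle$: the first and third are bounded by $(\|C\| + |\mu|)\,|u - u_h|\,\|v\|_Z \to 0$, and the middle one by $\epsilon_h\,|u_h|\,\|v\|_Z \to 0$ using the assumed consistency (\ref{eq:cnc}) and the boundedness of $|u_h|$. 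Hence the consistency estimate for the sum holds uniformly in $v \in X_h$.

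With consistency and weak stability in place and $A + \tilde C$ injective, Proposition~\ref{prop:compconv} yields $\| K - K_h \|_{O \to O} \to 0$ for $K = (A + \tilde C)^{-1}$ and its Galerkin analogue $K_h$, and the spectral theorem for norm-convergent compact self-adjoint operators then gives convergence of the discrete eigenvalues and eigenspaces, the shift by $\mu$ merely translating the spectrum back to that of $A + C$. I expect no genuine obstacle at this assembly stage; the real work is already done. The essential point, forced by $X_h \not\subseteq Y$, is that the denominator must carry the weak norm $\|v\|_Z$, and the crux is Proposition~\ref{prop:discdomx} — that the discrete domain seminorm $[\,\cdot\,]_h$ dominates the Slobodetskij seminorm $\lfloor\,\cdot\,\rfloor_s$ — for which the whole of Section~\ref{sec:frac} was developed.
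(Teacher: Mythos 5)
Your proposal is correct and follows essentially the same route the paper intends: the corollary is stated as a direct assembly of Proposition \ref{prop:discisoz} (weak stability on $\tilde X_h$), the consistency proposition, the kernel-gap property from Proposition \ref{prop:basic}, and the abstract perturbation result Proposition \ref{prop:pertac}, feeding into Proposition \ref{prop:compconv} with $Z = \rmH^s\alter^\bs(S)$. Your explicit handling of the shift $\mu I$ and the splitting of the consistency error for $a_h + \tilde c_h$ simply fills in details the paper leaves implicit.
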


\paragraph{Convergence rates.} The previous results did not provide explicit convergence rates, and should one try to deduce rates from the proofs, one would get suboptimal ones. In this paragraph we establish stronger stability estimates and almost optimal convergence rates.

On the space $\tilde X_h = X_h \cap G_h^\perp$ we denote in this paragraph:
\begin{equation}
[ u ]^2_{h,1}  =   | \rmd u |^2 + ( \sup_{v \in X_h} \frac{ |\langle u,  \rmd v\rangle | }{|v|})^2.
\end{equation}
and:
\begin{equation}
[ u ]^2_{h,0} = |u|^2.
\end{equation}
More generally we introduce, on $\tilde X_h$, the norms $[ u ]^2_{h,s}$ for $0  \leq  s \leq 1$ defined by interpolation between $[ \cdot ]^2_{h,1}$ and  $[ \cdot ]^2_{h,0}$, say by the complex method. The space $\tilde X_h$ equipped with the norm $[ u ]^2_{h,s}$ will be denoted $\tilde X_{h, s}$. Actually we need only the cases $s =0, 1/2, 1$.

\begin{proposition}\label{prop:isdhalf}
We have an estimate:
\begin{equation}
1 \cleq \infsup{u \in  \tilde X_h}{v\in \tilde X_h}{|a(u,v)|}{[u]_{h,1/2}\, [v]_{h,1/2}}. 
\end{equation}
\end{proposition}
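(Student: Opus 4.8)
The plan is to recognize the claim as the midpoint self-interpolation of two one-sided inf-sup conditions that are already at hand, carried out by inverting the operator attached to $a$. Introduce the operator $T$ on the finite-dimensional space $\tilde X_h$ defined, with respect to the $O$-inner product, by $\langle Tu, v\rangle = a(u,v)$ for $u,v \in \tilde X_h$; since $a$ is symmetric, $T$ is self-adjoint. Write $H_s$ for $\tilde X_h$ equipped with the norm $[\cdot]_{h,s}$, and let $(H_s)^*$ be its dual taken with respect to the $O$-pairing, so that $\|w\|_{(H_s)^*} = \sup_v |\langle w,v\rangle| / [v]_{h,s}$. The first endpoint is Proposition \ref{prop:discis} itself, which reads $[u]_{h,0} = |u| \cleq \sup_v |a(u,v)|/[v]_{h,1}$, i.e. $T : H_0 \to (H_1)^*$ is bounded below uniformly in $h$. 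The second endpoint is the explicit test-function computation already inside the proof of Proposition \ref{prop:discis}: for $v = \rmd u + \rmd^\star_h u$ (which lies in $\tilde X_h$, since $\rmd u \perp G_h$ and $\rmd^\star_h u \in V_h$) one has $a(u,v) = [u]_{h,1}^2$ and $|v| = [u]_{h,1}$, whence $[u]_{h,1} \cleq \sup_v |a(u,v)|/|v| = \sup_v |a(u,v)|/[v]_{h,0}$; that is, $T : H_1 \to (H_0)^*$ is bounded below uniformly in $h$.

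Because $\tilde X_h$ is finite-dimensional, each bounded-below estimate forces $T$ to be invertible, and the two estimates translate into $h$-uniform bounds on the inverse, namely $T^{-1} : (H_1)^* \to H_0$ and $T^{-1} : (H_0)^* \to H_1$. The equivalence between the two formulations, via self-adjointness of $T$ and the equality of the norms of an operator and its transpose, is exactly the remark invoked at the end of the proof of Proposition \ref{prop:discis}. I would then apply the complex interpolation method at exponent $1/2$ to the single operator $T^{-1}$ using these two endpoints. Since the complex method is an exact interpolation functor, the operator
\[
T^{-1} : [(H_1)^*, (H_0)^*]_{1/2} \to [H_0, H_1]_{1/2}
\]
is bounded, with constant bounded by the geometric mean of the two endpoint constants, hence again $h$-uniform.

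It remains to identify the two interpolation spaces. By definition $[H_0,H_1]_{1/2} = H_{1/2}$. For the domain, Calderón's duality theorem for the complex method, combined with reflexivity of the finite-dimensional couple (so that the lower and upper complex methods coincide) and the symmetry $[A_0,A_1]_{1/2} = [A_1,A_0]_{1/2}$, gives the isometric identification $[(H_1)^*,(H_0)^*]_{1/2} = ([H_0,H_1]_{1/2})^* = (H_{1/2})^*$. Substituting $w = Tu$ into the bound on $T^{-1}$ yields $[u]_{h,1/2} \cleq \|Tu\|_{(H_{1/2})^*} = \sup_v |a(u,v)|/[v]_{h,1/2}$ for every $u \in \tilde X_h$, which is precisely the stated weak inf-sup condition. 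The delicate point is not the functional-analytic mechanism but the control of constants uniformly in $h$: one must ensure that the duality and symmetry identifications of interpolation spaces introduce no $h$-dependent factor. In finite dimensions the complex-method duality is isometric, so this is automatic, and the only genuine $h$-dependence enters through the two endpoint constants, which are uniform by Proposition \ref{prop:discis} and by the explicit test function.
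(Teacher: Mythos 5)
Your proposal is correct and follows essentially the same route as the paper: the paper likewise introduces the map $\calA_h: u \mapsto a(u,\cdot)$ on $\tilde X_h$, uses Proposition \ref{prop:discis} together with symmetry to get uniform bounds on $\calA_h^{-1}$ in the two endpoint operator norms $\tilde X_{h,1}^\star \to \tilde X_{h,0}$ and $\tilde X_{h,0}^\star \to \tilde X_{h,1}$, and then interpolates at exponent $1/2$. You merely spell out two points the paper leaves implicit — that the second endpoint comes from the explicit test function $v=\rmd u+\rmd^\star_h u$, and that the duality identification $[(H_1)^\star,(H_0)^\star]_{1/2}=(H_{1/2})^\star$ is isometric (hence $h$-uniform) in finite dimensions — which is a welcome clarification rather than a deviation.
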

\begin{proof}
Introduce the map:
\begin{equation}
\calA_h \mapping{\tilde X_h}{\tilde X_h^\star}{u}{a(u, \cdot)}
\end{equation}
From Proposition \ref{prop:discis} and the symmetry of $a_h$ we get that $\calA_h^{-1}$ is uniformly bounded in operator norm $\tilde X_{h, 1}^\star \to \tilde X_{h, 0}$ and $\tilde X_{h, 0}^\star \to \tilde X_{h, 1}$. By interpolation theory it is uniformly bounded in operator norm $\tilde X_{h, 1/2}^\star \to \tilde X_{h, 1/2}$. This can be expressed with the claimed inf--sup condition.
\end{proof}

\begin{proposition}\label{prop:zdhalf}
For $0 < s < 1/2$ we have an estimate, for $u \in \tilde X_h$:
\begin{equation}
\lfloor u \rfloor_s \cleq [u]_{h,1/2}.
\end{equation}
\end{proposition}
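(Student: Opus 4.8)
The plan is to transfer the estimate from $u$ to its mollification $\phi_{\epsilon h} \ast u$, which is genuinely smooth, and to bound the true $\rmH^{1/2}(S)$ norm of that mollification by interpolating the \emph{smoothing operator} between its $\rmL^2$ and $\rmH^1$ mapping properties. The crucial point, which circumvents the fact that finite element forms are not uniformly in $\rmH^{1/2}$, is that $\phi_{\epsilon h} \ast u$ has a full $\rmH^1$ norm controlled by the discrete seminorm $[\cdot]_{h,1}$, so that no loss of regularity is incurred at the $\theta = 1$ endpoint.

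Concretely, I would first record, for $u \in \tilde X_h$, two endpoint bounds for the linear smoothing map $C_h : u \mapsto \phi_{\epsilon h} \ast u$. On the one hand, Young's inequality (or Proposition \ref{prop:l2stab}) gives $|\phi_{\epsilon h} \ast u| \cleq |u| = [u]_{h,0}$, so $C_h : \tilde X_{h,0} \to \rmL^2(S)$ is uniformly bounded. On the other hand, combining Proposition \ref{prop:oneequiv} (which gives $|\nabla (\phi_{\epsilon h} \ast u)| \ceq \lceil u \rceil_h$), Proposition \ref{prop:honedom} (which gives $\lceil u \rceil_h \ceq [u]_h = [u]_{h,1}$), and the Poincar\'e--Friedrichs domination $|u| \cleq [u]_h$ valid on $\tilde X_h$ (which follows from Proposition \ref{prop:basic}, as noted after the definition of $[\cdot]_h$), one obtains $\|\phi_{\epsilon h} \ast u\|_{\rmH^1} \cleq [u]_{h,1}$, so $C_h : \tilde X_{h,1} \to \rmH^1(S)$ is uniformly bounded.

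Next I would apply the complex interpolation functor to the operator $C_h$ at $\theta = 1/2$. Since $\tilde X_{h,1/2}$ is by definition $[\tilde X_{h,0}, \tilde X_{h,1}]_{1/2}$ and $[\rmL^2(S), \rmH^1(S)]_{1/2} = \rmH^{1/2}(S)$, the interpolation of operators (as in Lemma 25.3 of \cite{Tar07}) yields, with an $h$-uniform constant, $\|\phi_{\epsilon h} \ast u\|_{\rmH^{1/2}} \cleq [u]_{h,1/2}$. The fixed continuous embedding $\rmH^{1/2}(S) \hookrightarrow \rmH^s(S)$ for $0 < s < 1/2$ then gives $\lfloor \phi_{\epsilon h} \ast u \rfloor_s \cleq [u]_{h,1/2}$. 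Finally, Proposition \ref{prop:slobstab} lets me fix $\epsilon$ so that $\lfloor u - \phi_{\epsilon h} \ast u \rfloor_s \leq \delta \lfloor u \rfloor_s$ with $\delta < 1$, whence $\lfloor u \rfloor_s \cleq \lfloor \phi_{\epsilon h} \ast u \rfloor_s \cleq [u]_{h,1/2}$, as claimed.

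The main obstacle is conceptual rather than computational: one must interpolate the smoothing operator, not the inclusion $\tilde X_h \hookrightarrow \rmH^s$. Interpolating the inclusion directly would only furnish, at $\theta = 1$, the genuine endpoint $\rmH^{s_1}$ with $s_1 < 1/2$ (Proposition \ref{prop:discdomx}), and halving at $\theta = 1/2$ would cap the attainable smoothness at $s < 1/4$; passing to $\phi_{\epsilon h} \ast u$ restores a true $\rmH^1$ endpoint, so the midpoint is genuine $\rmH^{1/2}$ and the whole range $0 < s < 1/2$ is recovered. The only point requiring care is that all interpolation constants be independent of $h$, which holds because both endpoint bounds for $C_h$ are $h$-uniform and the target spaces $\rmL^2(S)$, $\rmH^1(S)$, $\rmH^{1/2}(S)$ are fixed.
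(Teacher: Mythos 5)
Your proof is correct, and the $h$-uniformity points you flag are exactly the right ones; in particular the domination $|u|\cleq [u]_{h,1}$ on $\tilde X_h$, which you need for the full $\rmH^1$ endpoint of the smoothing map, is indeed available from Proposition \ref{prop:basic}. The paper's proof uses the same two ingredients --- mollification and the pair of endpoint norms $[\cdot]_{h,0}$, $[\cdot]_{h,1}$ --- but organizes the interpolation differently: it first combines Propositions \ref{prop:discint} and \ref{prop:honedom} into the multiplicative bound $\lfloor u\rfloor_s \cleq [u]_{h,0}^{1-s}[u]_{h,1}^{s}$ and then invokes the fact that a multiplicative estimate of exponent $s$ is dominated by the interpolation norm of any exponent $s'>s$, choosing $s'=1/2$. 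You instead interpolate the smoothing operator $C_h$ itself at exactly $\theta=1/2$, obtaining a genuine uniform $\rmH^{1/2}$ bound on $\phi_{\epsilon h}\ast u$, and then descend to $\rmH^s$ via the embedding and Proposition \ref{prop:slobstab}. What your version buys is that the interpolation-theoretic step is the completely standard boundedness of a linear operator between interpolated couples, whereas the paper's ``from there we may deduce'' step rests on the slightly less elementary comparison between the multiplicative functional and the complex interpolation norm; what it costs is only that a single $\epsilon$ must be fixed to serve Propositions \ref{prop:l2stab}, \ref{prop:oneequiv} and \ref{prop:slobstab} simultaneously, which is harmless, and as a by-product you even get the marginally stronger conclusion $\|\phi_{\epsilon h}\ast u\|_{\rmH^{1/2}} \cleq [u]_{h,1/2}$.
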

\begin{proof}
Combining Propositions \ref{prop:discint} and \ref{prop:honedom} we get, for $u\in \tilde X_h$:
\begin{equation}
\lfloor u \rfloor_{s} \cleq [u]_{h,0}^{1-s} [u]_{h,1}^{s \phantom{-1}}.
\end{equation}
From there we may deduce, for $s < s' \leq 1 $:
\begin{equation}
\lfloor u \rfloor_{s} \cleq [u]_{h,s'}.
\end{equation}
We choose $s' = 1/2$ and obtain the announced estimate.
\end{proof}

\begin{proposition}\label{prop:iss}
We have an estimate:
\begin{equation}
1 \cleq \infsup{u \in  \tilde X_h}{v\in \tilde X_h}{|a_h(u,v)|}{\lfloor u \rfloor_s \, \lfloor v \rfloor_s}. 
\end{equation}
\end{proposition}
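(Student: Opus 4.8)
The plan is to deduce this inf--sup condition directly from the two preceding results: the inf--sup condition in the interpolation norm, Proposition \ref{prop:isdhalf}, together with the one-sided comparison $\lfloor \cdot \rfloor_s \cleq [\cdot]_{h,1/2}$ of Proposition \ref{prop:zdhalf}. The guiding heuristic is that the Slobodetskij norm $\lfloor\cdot\rfloor_s$ is \emph{weaker} than $[\cdot]_{h,1/2}$ on $\tilde X_h$, and a weaker norm appearing in the denominator of an inf--sup quotient can only make the quotient larger. Hence the inf--sup constant for $\lfloor\cdot\rfloor_s$ is bounded below by that for $[\cdot]_{h,1/2}$, up to the square of the comparison constant.

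First I would record that on $\tilde X_h \subseteq X_h$ the form $a_h$ agrees with $a$, so Proposition \ref{prop:isdhalf} may be read with $a_h$ in place of $a$. Fix a constant $C_1$, independent of $h$, with $\lfloor w \rfloor_s \leq C_1 [w]_{h,1/2}$ for all $w \in \tilde X_h$, as furnished by Proposition \ref{prop:zdhalf}. For a fixed $u \in \tilde X_h$ with $u \neq 0$, I would first estimate the supremum over $v$: since $\lfloor v\rfloor_s \leq C_1 [v]_{h,1/2}$, each quotient satisfies $|a_h(u,v)| / \lfloor v\rfloor_s \geq C_1^{-1} |a_h(u,v)| / [v]_{h,1/2}$, whence $\sup_{v \in \tilde X_h} |a_h(u,v)| / \lfloor v\rfloor_s \geq C_1^{-1}\sup_{v \in \tilde X_h} |a_h(u,v)| / [v]_{h,1/2}$. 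Dividing through by $\lfloor u\rfloor_s \leq C_1 [u]_{h,1/2}$ and then taking the infimum over $u$, Proposition \ref{prop:isdhalf} yields the stated lower bound with a constant comparable to $C_1^{-2}$ times the one of Proposition \ref{prop:isdhalf}.

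The one point that deserves a word is that $\lfloor\cdot\rfloor_s$ is a genuine norm, not merely a seminorm, on $\tilde X_h$: its kernel consists of constant forms, which lie in $G_h$ and are therefore orthogonal to $\tilde X_h = X_h \cap G_h^\perp$, so the quotient is well defined. Beyond this there is no real obstacle, since all the substance already sits in Propositions \ref{prop:isdhalf} and \ref{prop:zdhalf}; the only thing to watch is the direction of the comparison, namely that $[\cdot]_{h,1/2}$ dominates $\lfloor\cdot\rfloor_s$ and not the reverse, which is exactly what makes the weaker norm harmless in the denominator.
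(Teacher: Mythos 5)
Your argument is correct and is exactly the paper's proof, which simply cites Propositions \ref{prop:isdhalf} and \ref{prop:zdhalf}: the norm comparison $\lfloor \cdot \rfloor_s \cleq [\cdot]_{h,1/2}$ on $\tilde X_h$ weakens both the numerator's denominator and the outer denominator, so the inf--sup constant transfers with a factor $C_1^{-2}$. Your added remark on definiteness of $\lfloor\cdot\rfloor_s$ on $\tilde X_h$ is a sensible extra precaution, consistent with Remark \ref{rem:ggh}.
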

\begin{proof}
From Propositions \ref{prop:isdhalf} and \ref{prop:zdhalf}.
\end{proof}

We will perturb this estimate, by interpolating with the natural norm on $X$, which will be written:
\begin{equation}
\|u\|^2_\rmd = | u|^2 + |\rmd u|^2.
\end{equation}
We notice:
\begin{proposition}\label{prop:isd}
We have an estimate:
\begin{equation}
1 \cleq \infsup{u \in  \tilde X_h}{v\in \tilde X_h}{|a(u,v)|}{\|u\|_\rmd \, \|v\|_\rmd}. 
\end{equation}
\end{proposition}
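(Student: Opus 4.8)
The plan is to prove the inf--sup directly, without any further interpolation, by showing that for each fixed $u \in \tilde X_h$ one has $\sup_{v \in \tilde X_h} |a(u,v)| / \|v\|_\rmd \cgeq \|u\|_\rmd$. Since $\|u\|_\rmd = (|u|^2 + |\rmd u|^2)^{1/2} \leq \sqrt 2\, \max(|u|, |\rmd u|)$, it suffices to bound this supremum from below by $|u|$ and by $|\rmd u|$ separately, using a different test function in each case, and then to keep whichever of the two lower bounds is the larger. The symmetry of $a$ then upgrades the one-sided statement to the two-sided inf--sup in the proposition.

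For the $|u|$ part I would simply recycle Proposition \ref{prop:discis}. The only point to observe is that the discrete domain seminorm $[\cdot]_h$ appearing there dominates $\|\cdot\|_\rmd$ on $\tilde X_h$: indeed $|\rmd v| \leq [v]_h$ straight from the definition (\ref{eq:ysem}), while $|v| \cleq [v]_h$ on $\tilde X_h$ because $[\cdot]_h$ is a norm dominating the $\rmL^2$ norm there (the remark following (\ref{eq:ysem}), via Proposition \ref{prop:basic}); hence $\|v\|_\rmd \cleq [v]_h$. Enlarging the denominator only shrinks the ratio, so
\[
\sup_{v \in \tilde X_h} \frac{|a(u,v)|}{\|v\|_\rmd} \cgeq \sup_{v \in \tilde X_h} \frac{|a(u,v)|}{[v]_h} \cgeq |u|.
\]

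For the $|\rmd u|$ part I would take the explicit test function $v = \rmd u$. First one checks $\rmd u \in \tilde X_h$: it lies in $X_h$ because $\rmd$ maps $X_h$ into $X_h$, and it is $O$-orthogonal to $G_h$ because $G_h \perp \rmd X_h$. Using $\rmd^2 = 0$ one computes $a(u,\rmd u) = \langle \rmd u, \rmd u\rangle + \langle u, \rmd \rmd u\rangle = |\rmd u|^2$ and $\|\rmd u\|_\rmd^2 = |\rmd u|^2 + |\rmd \rmd u|^2 = |\rmd u|^2$, so the ratio for this single test function equals $|\rmd u|$, giving $\sup_{v \in \tilde X_h} |a(u,v)|/\|v\|_\rmd \cgeq |\rmd u|$. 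Combining the two bounds yields $\sup_v |a(u,v)|/\|v\|_\rmd \cgeq \max(|u|,|\rmd u|) \cgeq \|u\|_\rmd$, as desired.

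The argument is essentially routine once these two ingredients are assembled; the step I would single out as the crux is the comparison $\|v\|_\rmd \cleq [v]_h$ on $\tilde X_h$, since it is precisely what permits the strong discrete domain norm $[\cdot]_h$ of Proposition \ref{prop:discis} to be traded for the genuinely weaker graph norm $\|\cdot\|_\rmd$. This comparison is special to $\tilde X_h = X_h \cap G_h^\perp$, where $[\cdot]_h$ controls the $\rmL^2$ norm by Proposition \ref{prop:basic}; on all of $X_h$, where the cohomology part $G_h$ survives, no such control holds and the reduction would break down.
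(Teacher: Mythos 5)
Your proof is correct, but it takes a genuinely different route from the paper's. The paper argues in one shot: it writes the discrete Hodge decomposition $u = v + \rmd w$ with $v, w \in V_h$ and tests against the single ``swapped'' function $u' = w + \rmd v$, for which orthogonality gives $a(u,u') = |\rmd v|^2 + |\rmd w|^2$ while the discrete Poincar\'e inequality gives $\|u'\|_\rmd \ceq \|u\|_\rmd$, so $u'$ is quasi-optimal. You instead split $\|u\|_\rmd$ into its two constituents: the $|u|$ part is obtained by recycling Proposition \ref{prop:discis} together with the comparison $\|v\|_\rmd \cleq [v]_h$ on $\tilde X_h$ (which you correctly identify as the crux, and which holds uniformly in $h$ precisely because Proposition \ref{prop:basic} makes $[\cdot]_h$ dominate the $\rmL^2$ norm on $G_h^\perp$), and the $|\rmd u|$ part comes from the explicit test function $\rmd u$, which indeed lies in $\tilde X_h$ since $\rmd X_h \perp G_h$ (the degenerate case $\rmd u = 0$ is vacuous and worth a word). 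Both arguments ultimately rest on the same discrete Hodge-theoretic ingredients; the paper's buys an explicit quasi-optimal test function and is self-contained, while yours is shorter given that Proposition \ref{prop:discis} is already in hand and makes more transparent exactly where the restriction to $G_h^\perp$ enters. One cosmetic point: the displayed inf--sup is already the ``one-sided'' statement you prove (for every $u$, the sup over $v$ dominates $\|u\|_\rmd$), so no upgrade by symmetry is needed for the proposition as stated; symmetry of $a$ is only relevant if one wants the transposed condition as well.
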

\begin{proof}
For $u \in \tilde X_h$ write $u = v + \rmd w$ with $v,w \in V_h$. Define $u' = w + \rmd v$. We have:
\begin{align}
\| u'\|_\rmd^2 & = |w|^2 + |\rmd v|^2 + |\rmd w|^2,\\
& \ceq |\rmd v|^2 + |\rmd w|^2,\\
& \ceq |\rmd v|^2 + |\rmd w|^2 + |v|^2 = \|u\|^2_\rmd.
\end{align}
We also have:
\begin{align}
a(u,u') & = \langle \rmd u, \rmd v \rangle + \langle u, \rmd w \rangle, \\
& = |\rmd v|^2 + | \rmd w|^2.
\end{align}
Therefore $u'$ provides a quasi-optimal test function for $u$.
\end{proof}

We require the following interpolation result (where it is understood that we are working on differential forms).
\begin{lemma} We have the interpolation result, for $\theta \in ]0, 1[$:
\begin{equation}
[X, \rmH^s(S)]_\theta = \{ u \in \rmH^{\theta s}(S) \ : \ \rmd u \in \rmH^{\theta(s-1)}(S) \}.
\end{equation}
\end{lemma}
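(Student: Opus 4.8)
The plan is to recognize both endpoint spaces as \emph{graph spaces} of the exterior derivative acting between ordinary Sobolev spaces, and then to realize the couple $(X,\rmH^s)$ as a \emph{retract} of a product of Sobolev couples. Once this is done, the identity follows from two off-the-shelf facts: interpolation of Sobolev spaces, $[\rmH^{a_0},\rmH^{a_1}]_\theta = \rmH^{(1-\theta)a_0+\theta a_1}$, and the stability of any exact interpolation functor under retractions. Throughout, write $\rmH^a$ for $\rmH^a\alter^\bs(S)$, so that $\rmd:\rmH^a\to\rmH^{a-1}$ is bounded for every $a\in\bbR$. Up to equivalent norms, $X = \{u\in\rmH^0 : \rmd u\in\rmH^0\}$ and $\rmH^s(S) = \{u\in\rmH^s : \rmd u\in\rmH^{s-1}\}$ (the second membership being automatic there).

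First I would introduce the coretraction $Su = (u,\rmd u)$ together with the product couples $B_0 = \rmH^0\times\rmH^0$ and $B_1 = \rmH^s\times\rmH^{s-1}$. Then $S:X\to B_0$ is isometric for $\|\cdot\|_\rmd$, and $S:\rmH^s\to B_1$ is bounded. The whole content of the lemma is concentrated in the construction of a \emph{single} reconstruction operator $R$, bounded both $B_0\to X$ and $B_1\to\rmH^s$, satisfying $RS=\id$. The naive projection $R(f,g)=f$ fails, since a pair $(f,g)$ with $f\in\rmH^0$ carries no control on $\rmd f$; one must build an $R$ that returns an element whose exterior derivative is controlled, i.e.\ a parametrix for $\rmd$.

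For this I would use Hodge theory on the flat torus. Let $\Delta=\rmd\rmd^\star+\rmd^\star\rmd$ be the Hodge Laplacian, $P_0$ the orthogonal projection onto its finite-dimensional kernel (the constant forms), and $\calN$ the Green operator (the inverse of $\Delta$ on $(\ker\Delta)^\perp$, extended by $0$). Since the nonzero lattice frequencies are bounded away from the origin, $\calN$ is a Fourier multiplier of order $-2$, so $\calN:\rmH^a\to\rmH^{a+2}$ is bounded and commutes with $\rmd,\rmd^\star$. Set
\[
R(f,g)=\calP f+\rmd^\star\calN\, g,\qquad \calP:=P_0+\rmd\rmd^\star\calN,
\]
where $\calP$ is the orthogonal projection onto closed forms $\ker\rmd$, with complementary projection $I-\calP=\rmd^\star\rmd\calN$ onto coexact forms. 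Using $\rmd^\star\calN\rmd=\rmd^\star\rmd\calN=I-\calP$ one checks $R(f,\rmd f)=\calP f+(I-\calP)f=f$, i.e.\ $RS=\id$ as an operator identity. Boundedness is an order count: $\calP$ has order $0$ and $\rmd^\star\calN$ order $-1$, so $R:\rmH^0\times\rmH^0\to X$ (noting $\rmd R(f,g)=\rmd\rmd^\star\calN g\in\rmH^0$) and $R:\rmH^s\times\rmH^{s-1}\to\rmH^s$ are both bounded.

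With $(R,S)$ a retraction of $(X,\rmH^s)$ onto $(B_0,B_1)$, the standard retraction theorem gives $u\in[X,\rmH^s]_\theta$ if and only if $Su\in[B_0,B_1]_\theta$, with equivalent norms. Since interpolation commutes with finite products, $[B_0,B_1]_\theta=\rmH^{\theta s}\times\rmH^{\theta(s-1)}$, and reading off the two coordinates of $Su=(u,\rmd u)$ yields exactly $u\in\rmH^{\theta s}(S)$ and $\rmd u\in\rmH^{\theta(s-1)}(S)$, which is the claim. The main obstacle is precisely the construction of $R$: it forces the parametrix built from the Green operator, whereas the interpolation steps are routine. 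I would add that on the torus one can dispense with the abstract machinery by a direct Fourier computation: the frequency-wise Hodge splitting $\alter^\bs=\ker L_\xi\oplus(\ker L_\xi)^\perp$, with $L_\xi=\xi^\flat\wedge(\cdot)$ the symbol of $\rmd$, turns all three spaces into weighted $\ell^2$ spaces, and complex interpolation of weighted $\ell^2$ spaces produces the geometric-mean weights, which match the claimed weights on each piece.
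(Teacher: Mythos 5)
Your proof is correct, and it is considerably more self-contained than what the paper offers: the paper's ``proof'' of this lemma consists of pointing to Fourier techniques on the periodic domain (citing Hiptmair--Li--Zou for a closely related computation), to a Baiocchi-type abstract result identifying the interpolation of graph spaces with the graph space of the interpolations, and to the reiteration theorem for recognizing $[\rmL^2,\rmH^s]_\theta$ and $[\rmL^2,\rmH^{s-1}]_\theta$ as $\rmH^{\theta s}$ and $\rmH^{\theta(s-1)}$. Your retraction argument --- coretraction $S u=(u,\rmd u)$ into the product couples, reconstruction $R(f,g)=\calP f+\rmd^\star\calN g$ built from the Hodge Green operator on the flat torus, verification of $RS=\id$ and of the two endpoint boundedness claims, then interpolation of products --- is precisely the mechanism that underlies the Baiocchi-type result the paper cites, so the two routes are the same in substance; the difference is that you actually construct the parametrix rather than outsourcing it. What your version buys is an explicit, checkable proof whose only torus-specific ingredients are that $\calN$ is an order $-2$ Fourier multiplier commuting with $\rmd$ and $\rmd^\star$ and that the harmonic space is finite-dimensional; this makes transparent exactly what would need to be replaced to extend the lemma beyond the flat periodic setting (a pseudodifferential parametrix for the Hodge Laplacian in place of the exact Green operator). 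Your closing remark that one can instead diagonalize everything by the frequency-wise splitting $\ker L_\xi\oplus(\ker L_\xi)^\perp$ and interpolate weighted $\ell^2$ spaces is the Fourier route the paper actually gestures at; either suffices. The only point worth making explicit in a final write-up is the small bookkeeping step that an element of the right-hand side lies in the sum space $X+\rmH^s$ (via $u=R(u,\rmd u)$ with $(u,\rmd u)$ in the sum of the product couples), so that membership in the interpolation space is even meaningful; your argument supplies this automatically but does not say it.
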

\begin{proof}
Since we are working on a periodic domain, this can be proved by Fourier techniques. This approach is detailed in \cite{HipLiZou12} for a closely related problem. Notice also that the result is of the type covered by \cite{Bai66} since we essentially want to prove:
\begin{equation}
[X, \rmH^s]_\theta = \{ u \in [\rmL^2, \rmH^s]_\theta \ : \ \rmd u \in [\rmL^2, \rmH^{s-1}]_\theta \}.
\end{equation}
The interpolation spaces on the right are then identified as standard spaces, by the reiteration theorem (Theorem 26.3 in \cite{Tar07}).
\end{proof}

Now we choose $s \in ]0,1/2[$ close to $1/2$. Then we choose $\theta \in ]0,1[$ close to $1$ such that $\theta (1-s) \in ]0,1/2[$. Then we set $s_1 = \theta s$ and $s_2 =  \theta (1-s)$. By choosing $s$ and $\theta$ cleverly we may get $s_1$ and $s_2$ as close to $1/2$ as we wish, yet strictly smaller. We then introduce the following norms, corresponding to the interpolation spaces of the above Lemma.
\begin{equation}\label{eq:nss}
\| u\|_{s_1, s_2}^2 = \| u\|_{\rmH^{s_1}}^2 + \| \rmd u\|_{\rmH^{-s_2}}^2.
\end{equation}
These norms are well defined on $X_h$. Combining Propositions \ref{prop:iss} and \ref{prop:isd} we get:
\begin{proposition}\label{prop:isinterhd}
We have an estimate:
\begin{equation}
1 \cleq \infsup{u \in  \tilde X_h}{v\in \tilde X_h}{|a(u,v)|}{\|u\|_{s_1,s_2} \, \|v\|_{s_1, s_2}}. 
\end{equation}
\end{proposition}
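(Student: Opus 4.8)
The plan is to obtain this estimate by interpolating the two inf--sup conditions already established, namely Proposition \ref{prop:iss} (with respect to the Slobodetskij seminorm $\lfloor \cdot \rfloor_s$) and Proposition \ref{prop:isd} (with respect to the graph norm $\|\cdot\|_\rmd$ of $X$), following the same device as in the proof of Proposition \ref{prop:isdhalf}. I would reintroduce the operator $\calA_h : \tilde X_h \to \tilde X_h^\star$ defined by $\calA_h u = a(u, \cdot)$. For a symmetric form $a$, an inf--sup condition of the stated type is exactly the assertion that $\calA_h$ is invertible with $\calA_h^{-1}$ uniformly bounded, regarded as an operator from the dual of $\tilde X_h$ (equipped with the test norm) to $\tilde X_h$ (equipped with the trial norm); self-duality of $\calA_h$ makes the two-sided version automatic. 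Thus Proposition \ref{prop:isd} states that $\calA_h^{-1}$ is uniformly bounded when $\tilde X_h$ carries $\|\cdot\|_\rmd$, and Proposition \ref{prop:iss} states the same when $\tilde X_h$ carries $\lfloor \cdot \rfloor_s$; note here that $\lfloor \cdot \rfloor_s$ is a genuine norm on $\tilde X_h = X_h \cap G_h^\perp$, uniformly equivalent there to the full $\rmH^s$-norm.

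Next I would apply the complex interpolation method to the single operator $\calA_h^{-1}$, using these two endpoint bounds with a parameter $\theta \in ]0,1[$. Since the interpolated operator norm is controlled by $\|\calA_h^{-1}\|_{X}^{1-\theta}\,\|\calA_h^{-1}\|_{\rmH^s}^{\theta}$, with a constant depending only on $\theta$, the resulting bound is automatically uniform in $h$. On the target side, the interpolation Lemma preceding the statement identifies the interpolation norm between $\|\cdot\|_\rmd$ (at $\theta = 0$) and the $\rmH^s$-norm (at $\theta = 1$) with $\|\cdot\|_{s_1, s_2}$, for $s_1 = \theta s$ and $s_2 = \theta(1-s)$, which are exactly the exponents fixed before the statement. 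On the source side, duality commutes with complex interpolation, so the interpolated dual norm is the norm dual to $\|\cdot\|_{s_1,s_2}$. Reading off the uniform bound for $\calA_h^{-1}$ as an operator $(\tilde X_h, \|\cdot\|_{s_1,s_2})^\star \to (\tilde X_h, \|\cdot\|_{s_1,s_2})$ then yields precisely the claimed inf--sup estimate.

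The step requiring care, and the one I expect to be the main obstacle, is that complex interpolation does not in general commute with restriction to the finite-dimensional subspaces $\tilde X_h$, whereas the interpolation Lemma is a statement about the ambient spaces $X$ and $\rmH^s(S)$; one must check that the intrinsic interpolation norm on $\tilde X_h$ agrees, uniformly in $h$, with the restriction of $\|\cdot\|_{s_1,s_2}$. The natural tool is the smoothed projection $\Pi_h$, which exhibits $X_h$ as a uniform retract of the couple $(X, \rmH^s(S))$: it is uniformly bounded in the graph norm (being $\rmL^2$-stable and commuting with $\rmd$, by Proposition \ref{prop:l2stab}) and in the fractional norms (Proposition \ref{prop:spstabs}), and it fixes $X_h$. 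The retract theorem for interpolation then gives the desired uniform norm equivalence on $X_h$, and the further restriction to $\tilde X_h$ is harmless since $\hat \delta(G_h, G) \to 0$ and $G$ is finite-dimensional. A minor wrinkle is that Proposition \ref{prop:spstabs} provides fractional stability of $\Pi_h$ with a slight loss of index; this can be absorbed by interpolating from a marginally larger smoothness exponent, or circumvented by arguing directly on the solution operator $f \mapsto \calA_h^{-1} f$ and interpolating the two a priori estimates $\|\calA_h^{-1} f\|_\rmd \cleq \|f\|_{(\|\cdot\|_\rmd)^\star}$ and $\lfloor \calA_h^{-1} f \rfloor_s \cleq \|f\|_{(\lfloor\cdot\rfloor_s)^\star}$ via the explicit mollifier and scaling, as elsewhere in the paper.
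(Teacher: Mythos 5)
Your proof follows the route the paper itself intends: its entire justification is the phrase ``Combining Propositions \ref{prop:iss} and \ref{prop:isd}'', i.e.\ interpolation of the two endpoint inf--sup conditions through the operator $\calA_h$ exactly as in Proposition \ref{prop:isdhalf}, with the interpolation lemma identifying the resulting norm as $\|\cdot\|_{s_1,s_2}$. The one remark worth adding is that the subspace-interpolation issue you flag is milder than you fear: to replace the intrinsic interpolation norm on $\tilde X_h$ by $\|\cdot\|_{s_1,s_2}$ in both the numerator's lower bound and the test-function denominator, one only needs the one-sided estimate $\|u\|_{s_1,s_2} \cleq \|u\|_{\theta}$, which is the automatic direction (the inclusion of $\tilde X_h$ into the ambient couple is uniformly bounded at both endpoints), so the retract argument via $\Pi_h$ and the attendant worry about the index loss in Proposition \ref{prop:spstabs} can be dispensed with.
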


In what follows we suppose $C$ is a zero order operator with smooth coefficients, such that $A + C$ is injective. The bilinear form associated with $C$ is denoted $c$. We denote by $b$ the bilinear form $a+c$. 

Applying Proposition \ref{prop:pertacm} we obtain:
\begin{proposition}\label{prop:impstab}
We have an estimate:
\begin{equation}
1 \cleq \infsup{u \in X_h}{v\in  X_h}{|b(u,v)|}{\|u\|_{s_1,s_2} \, \|v\|_{s_1, s_2}}. 
\end{equation}
\end{proposition}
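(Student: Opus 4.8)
The plan is to apply the perturbation result Proposition \ref{prop:pertacm}, taking $Z$ to be the space carrying the norm $\|\cdot\|_{s_1,s_2}$ of (\ref{eq:nss}); by the preceding interpolation Lemma this is the space $[X,\rmH^s(S)]_\theta$, which is intermediate between $O$ and $Y=\rmH^1\alter^\bs(S)$ and compactly embedded in $O$, so it fits the functional setting of the strengthened theory. The algebraic core of the argument is already available: the inf-sup condition for $a$ on $\tilde X_h = X_h\cap G_h^\perp$ measured in the $Z$-norm is precisely Proposition \ref{prop:isinterhd}, and the injectivity of $A+C$ is part of the standing hypotheses. What remains is to check the three remaining requirements of the ``Strengthened stability'' paragraph: the $Z$-consistency of $a_h$, the $Z$-consistency of $c_h$, and the kernel convergence $\delta(G_h,G)\to 0$ measured in the $Z$-norm.

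For the consistency of $a_h = a|_{X_h}$, I would let $Y^\infty$ consist of the forms $u\in\rmH^1\alter^\bs(S)$ with $\rmd u\in\rmH^1\alter^\bs(S)$ and set $u_h=\Pi_h u$. The strengthened demand that $u_h\to u$ in $Z$ (rather than only in $O$) follows from the fractional approximation estimates of Propositions \ref{prop:ratepi} and \ref{prop:ratepid}, which give $\|\Pi_h u - u\|_{\rmH^{s_1}}\to 0$ and $\|\rmd(\Pi_h u - u)\|_{\rmH^{-s_2}}\to 0$. Writing $a(u,v)-a_h(u_h,v)=\langle\rmd(u-u_h),v\rangle+\langle u-u_h,\rmd v\rangle$ and pairing each term by duality against the two constituents of $\|v\|_{s_1,s_2}$ yields
\begin{equation}
|a(u,v)-a_h(u_h,v)| \leq \|\rmd(u-u_h)\|_{\rmH^{-s_1}}\,\|v\|_{\rmH^{s_1}} + \|u-u_h\|_{\rmH^{s_2}}\,\|\rmd v\|_{\rmH^{-s_2}},
\end{equation}
so the consistency quotient of (\ref{eq:conan}) tends to $0$. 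For $c_h$ the natural choice $c_h=c|_{X_h}$ makes (\ref{eq:cncw}) hold with $\epsilon_n=0$; more generally, boundedness of $C$ on $O$ together with the embedding $Z\hookrightarrow O$ controls any quadrature error in the required form.

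For the kernel gap in the $Z$-norm I would start from $\hat\delta(G_h,G)\to 0$ in $O$, which is part (ii) of Proposition \ref{prop:basic}. Since every $u\in G_h$ satisfies $\rmd u=0$, its $Z$-norm collapses to $\|u\|_{\rmH^{s_1}}$, and Proposition \ref{prop:discdomb} gives $\|u\|_{\rmH^{s_1}}\cleq|u|$ uniformly in $h$; on the finite-dimensional space $G$ all norms are equivalent. Hence the $O$-norm equivalence on $G_h$ and $G$ promotes the $O$-gap convergence to $Z$-gap convergence. (On a flat torus one may instead invoke Remark \ref{rem:ggh}, where $G_h=G$ and the gap vanishes identically.) With all hypotheses verified, Proposition \ref{prop:pertacm} delivers the asserted inf-sup estimate for $b=a+c$.

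The main obstacle is exactly the upgrade of the consistency estimate and of the kernel-gap convergence from the $O$-norm to the stronger $Z$-norm $\|\cdot\|_{s_1,s_2}$: this is where the fractional-order apparatus of Section \ref{sec:frac} is indispensable, through the $\rmH^s$-stability of $\Pi_h$ (Proposition \ref{prop:spstabs}), the fractional approximation rates, and the discrete domain estimate Proposition \ref{prop:discdomb}. Once those are in place, the perturbation by $c$ is handled abstractly, and no further structure of the Hodge-Dirac operator is needed.
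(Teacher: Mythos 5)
Your proposal is correct and follows exactly the route the paper takes: the paper's entire ``proof'' is the phrase ``Applying Proposition \ref{prop:pertacm} we obtain'', with the inf-sup input coming from Proposition \ref{prop:isinterhd} and $Z$ the interpolated space of norm (\ref{eq:nss}). Your write-up merely makes explicit the hypothesis-checking (strengthened consistency via Propositions \ref{prop:ratepi}--\ref{prop:ratepid}, the kernel gap in $Z$ -- which on the torus is trivial by Remark \ref{rem:ggh} -- and the restriction $c_h=c|_{X_h}$) that the paper leaves implicit.
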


Let $Z$ be the space defined by the norm (\ref{eq:nss}). Let $Z'$ be the space with norm:
\begin{equation}
\| u\|_{s_2, s_1}^2 = \| u\|_{\rmH^{s_2}}^2 + \| u\|_{\rmH^{-s_1}}^2.
\end{equation}
Then $b$ is continuous on $Z' \times Z$, and it may be checked that in fact $Z'$ is the dual of $Z$ with respect to $b$. By construction $s_1 \leq s_2$, so $Z' \subseteq Z$ with continuous injection. Notice the crucial fact that $X_h$ is a subspace of both $Z$ and $Z'$.

Define the Galerkin projection $R_h: Y \to X_n$ by:
\begin{equation}
\forall v \in X_h \quad b(R_h u, v) = b(u,v).
\end{equation}
Proposition \ref{prop:isinterhd} shows that $R_h$ is uniformly stable $Z' \to Z$. This stability is then combined with the approximation orders proved in Propositions \ref{prop:ratepi} and \ref{prop:ratepid}, to give:
\begin{proposition}\label{prop:rhrate}
 We have:
\begin{equation}
\| u - R_h u \|_{s_1, s_2}  \leq h^{p+ s_1} | \nabla^{p+1} u |.
\end{equation}
\end{proposition}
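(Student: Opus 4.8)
The plan is to run a quasi-optimality (Céa-type) argument based on the inf--sup stability of $b$ on $X_h$ established in Proposition \ref{prop:impstab}, combined with the continuity and duality structure of $b$ on the pair $(Z', Z)$, and then to insert the approximation estimates for the mollified interpolator $\Pi_h$ from Propositions \ref{prop:ratepi} and \ref{prop:ratepid}. The key observation is that the claimed rate $h^{p+s_1}$ will come precisely from the $\|\rmd(\cdot)\|_{\rmH^{-s_1}}$ component of the $Z'$-norm, handled by Proposition \ref{prop:ratepid}.

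First I would record that $R_h$ is a projection onto $X_h$: for $w_h \in X_h$ the defining relation $b(R_h w_h, v) = b(w_h, v)$ for all $v \in X_h$ together with the uniqueness furnished by the inf--sup condition of Proposition \ref{prop:impstab} gives $R_h w_h = w_h$. Next I would show that $R_h$ is uniformly stable $Z' \to Z$. For $\phi \in Z'$, applying the inf--sup condition of Proposition \ref{prop:impstab} to $R_h \phi \in X_h$, then the defining relation, and finally the continuity of $b$ on $Z' \times Z$ (here using $X_h \subseteq Z$ so that the test functions have finite $Z$-norm),
\[
\|R_h \phi\|_{s_1,s_2} \cleq \sup_{v \in X_h} \frac{|b(R_h \phi, v)|}{\|v\|_{s_1,s_2}} = \sup_{v \in X_h} \frac{|b(\phi, v)|}{\|v\|_{s_1,s_2}} \cleq \|\phi\|_{s_2,s_1}.
\]

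Now fix $u \in \rmH^{p+1}\alter^\bs(S)$, which lies in $Z'$, and take $\Pi_h u \in X_h \subseteq Z'$. Since $(I - R_h)$ annihilates $X_h$, we have $u - R_h u = (I - R_h)(u - \Pi_h u)$, and using the continuous injection $Z' \subseteq Z$ together with the $Z'\to Z$ stability just proved,
\begin{align*}
\|u - R_h u\|_{s_1,s_2} & \leq \|u - \Pi_h u\|_{s_1,s_2} + \|R_h(u - \Pi_h u)\|_{s_1,s_2} \\
& \cleq \|u - \Pi_h u\|_{s_2,s_1}.
\end{align*}
It then remains to bound the two constituents of $\|u - \Pi_h u\|_{s_2,s_1}^2 = \|u - \Pi_h u\|_{\rmH^{s_2}}^2 + \|\rmd(u - \Pi_h u)\|_{\rmH^{-s_1}}^2$. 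The function part is controlled by Proposition \ref{prop:ratepi} (choosing an intermediate exponent $s_2 < s' < 1/2$) together with the $\rmL^2$ estimate, giving $\|u - \Pi_h u\|_{\rmH^{s_2}} \cleq h^{p+1-s'}|\nabla^{p+1}u|$; the exterior-derivative part is controlled by Proposition \ref{prop:ratepid} with $s = s_1$, giving $\|\rmd(u - \Pi_h u)\|_{\rmH^{-s_1}} \cleq h^{p+s_1}|\nabla^{p+1}u|$. Since $s_1 < 1/2$ and $s' < 1/2$ we have $p + s_1 < p + 1 - s'$, so the $\rmd$-term dominates and we conclude $\|u - R_h u\|_{s_1,s_2} \cleq h^{p+s_1}|\nabla^{p+1}u|$.

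The main obstacle is the mismatch of norms inherent in quasi-optimality for an inf--sup stable (rather than coercive) form: one only bounds the $Z$-error by the best approximation in the strictly stronger norm $Z'$, so the argument hinges on verifying both the continuous injection $Z' \hookrightarrow Z$ and the $Z'$-stability of $R_h$, which in turn rests on $b$ being continuous and inf--sup stable for the genuine dual pair $(Z', Z)$. The second delicate point is matching the two distinct approximation estimates to the two pieces of the $Z'$-norm — an $\rmH^{s_2}$ estimate on the function and an $\rmH^{-s_1}$ estimate on its exterior derivative — and recognizing that it is the latter that produces the advertised (dominant) convergence order $h^{p+s_1}$.
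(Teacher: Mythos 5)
Your proposal is correct and follows essentially the same route as the paper: insert $\Pi_h u$, use that $R_h$ is a projection together with its uniform $Z'\to Z$ stability (which the paper asserts from the inf--sup condition and you re-derive explicitly), and then feed in Propositions \ref{prop:ratepi} and \ref{prop:ratepid}, with the $\|\rmd(\cdot)\|_{\rmH^{-s_1}}$ term producing the dominant rate $h^{p+s_1}$. The only cosmetic difference is that you absorb the $Z$-norm of $u-\Pi_h u$ into its $Z'$-norm via the injection $Z'\hookrightarrow Z$, whereas the paper keeps both terms and takes a maximum over four exponents; the outcome is identical.
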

\begin{proof}
We have:
\begin{align}\label{eq:galerr}
\| u - R_h u \|_{s_1, s_2} & \leq \| u - \Pi_h u \|_{s_1, s_2} + \| R_h (u - \Pi_h u) \|_{s_1, s_2},\\
& \cleq \| u - \Pi_h u \|_{s_1, s_2} + \| u - \Pi_h u \|_{s_2, s_1},\\
& \cleq \max \{ h^{p+1 - s_1}, h^{p+ s_2}, h^{p+1 - s_2}, h^{p+ s_1} \} | \nabla^{p+1} u |.
\end{align}
We notice that $1-s_2 \geq 1- s_2 \geq s_2 \geq s_1$, so we end up with the announced estimate.
\end{proof}
This convergence order is as close to $h^{p + 1/2}$ as we wish. The convergence order is therefore optimal except for a loss of $h^{-\epsilon}$, for $\epsilon >0$ arbitrarily small.

We can also get approximation orders in $\rmL^2$ by an Aubin-Nitsche trick:
\begin{proposition} We have:
\begin{equation}
| u - R_h u | \cleq  h^{p+ 2 s_1} | \nabla^{p+1} u |.
\end{equation}
\end{proposition}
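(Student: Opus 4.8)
The plan is to run an Aubin--Nitsche duality argument, which trades one extra factor of $h^{s_1}$ against the $\rmL^2$ norm. Set $e = u - R_h u$ and let $\phi \in Y$ solve the dual problem $(A+C)\phi = e$; this is well posed because $A+C$ is assumed injective, hence invertible as a map $Y \to O$. Since $b$ is symmetric and $b(\phi, w) = \langle (A+C)\phi, w\rangle = \langle e, w\rangle$ for every $w \in O$, I would test with $w = e$ to obtain $|e|^2 = b(\phi, e) = b(e, \phi)$. The Galerkin orthogonality built into the definition of $R_h$, namely $b(e, v) = 0$ for all $v \in X_h$, then lets me subtract any discrete test function, and I would choose the smoothed projection $\Pi_h \phi$, giving $|e|^2 = b(e, \phi - \Pi_h \phi)$.

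The next step is to estimate this pairing with the sharp continuity of $b$ on $Z \times Z'$ (by symmetry the same as on $Z' \times Z$), rather than the crude continuity on $X \times X$. This yields $|e|^2 \cleq \|e\|_{s_1, s_2}\, \|\phi - \Pi_h \phi\|_{s_2, s_1}$, so I must check that $e \in Z$ and $\phi - \Pi_h\phi \in Z'$, which both hold since $u, \phi \in \rmH^1 = Y$ and $X_h \subseteq Z \cap Z'$. The first factor is exactly Proposition \ref{prop:rhrate}, giving $\|e\|_{s_1, s_2} \cleq h^{p+s_1}|\nabla^{p+1}u|$. For the second factor I would first invoke elliptic regularity on the flat torus for the Hodge--Dirac perturbed by the bounded symmetric zero order term $C$, in the form $\phi \in \rmH^1$ with $|\nabla \phi| \cleq |e|$. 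Then, applying the approximation estimates of Propositions \ref{prop:ratepi} and \ref{prop:ratepid} with the polynomial degree taken to be $0$, the $\rmH^{s_2}$ part of $\phi - \Pi_h\phi$ is $\cleq h^{1-s'}|\nabla\phi|$ for any $s_2 < s' < 1/2$, while the $\rmH^{-s_1}$ part of $\rmd\phi - \rmd\Pi_h\phi$ is $\cleq h^{s_1}|\nabla\phi|$. Since $s_1 + s_2 < 1$ the smaller power is $h^{s_1}$, so $\|\phi - \Pi_h\phi\|_{s_2, s_1} \cleq h^{s_1}|\nabla\phi| \cleq h^{s_1}|e|$.

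Combining the two bounds gives $|e|^2 \cleq h^{p+s_1}|\nabla^{p+1}u|\cdot h^{s_1}|e|$, and dividing by $|e|$ produces the claimed estimate $|u - R_h u| \cleq h^{p+2s_1}|\nabla^{p+1}u|$. I expect the main obstacle to be the norm bookkeeping in the duality pairing: one must use the refined continuity of $b$ on the asymmetric pair $Z \times Z'$ (the two spaces differing precisely by the swap $s_1 \leftrightarrow s_2$) and match it with the correspondingly asymmetric approximation rates for $\Pi_h\phi$, so that the dual solution contributes exactly the gain $h^{s_1}$ and not less. A secondary point is to ensure the a priori estimate $|\nabla\phi| \cleq |e|$ is available for $A+C$ and not merely for $A$, which holds since $C$ is a smooth zero order perturbation and $A+C$ is injective; the harmless $\epsilon$-loss from $s' > s_2$ is absorbed because $s_1, s_2$ may be chosen freely below $1/2$.
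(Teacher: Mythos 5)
Your proposal is correct and is essentially the paper's own Aubin--Nitsche argument: duality against the $\rmH^1$-regular solution of the adjoint problem, Galerkin orthogonality to insert a discrete approximant, continuity of $b$ on the asymmetric pair of interpolation spaces, and the rate from Proposition \ref{prop:rhrate} combined with the $h^{s_1}$ gain from approximating the dual solution. The only (harmless) cosmetic differences are that you approximate the dual solution by $\Pi_h\phi$ where the paper uses $R_h v$, and you place the error in $Z$ and the dual term in $Z'$ rather than the reverse, which is legitimate since $b$ is symmetric.
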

\begin{proof}
We write:
\begin{align}
| u - R_h u | & \cleq \sup_{v \in \rmH^1} \frac{|b(u - R_hu ,v)|}{\| v \|_{\rmH^1}},\\
& \cleq \sup_{v \in \rmH^1} \frac{|b(u - R_hu ,v - R_hv )|}{\| v \|_{\rmH^1}},\\
& \cleq \sup_{v \in \rmH^1} \frac{\| u - R_hu\|_{Z'} \| v - R_hv \|_Z}{\| v \|_{\rmH^1}}.
\end{align}
Then we use Proposition \ref{prop:rhrate}.
\end{proof}

If we define $K$ and $K_h$ as in (\ref{eq:akdef}, \ref{eq:akhdef}), but with respect to the bilinear form $b$ instead of $a$, we may use the identity $K -K_n = (I - R_h) K$. From the convergence rates obtained for $R_h$, convergence rates for the eigenvectors and values are obtained by the techniques of \cite{Cha83}\cite{BabOsb89}\cite{BanOsb90}\cite{BabOsb94}.

\paragraph{Outlook.} We finish with some remarks on possible future work. The generality of the above abstract framework seems particularly adapted to the following extensions. 
\begin{itemize}

\item Firstly, one could be interested in a variable background Riemannian metric. A technique to obtain a fully discrete theory, would be to approximate the metric by discrete metrics, such as those defined by Regge \cite{Reg61}\cite{Chr11NM}. This approximation step should fall under the scope of the abstract consistency requirement we defined. This topic would be interesting to pursue on general manifolds, not just toruses.

\item Secondly, the framework also sheds light on second order equations, and non-conforming discretizations thereof. Let $\nabla$ be the Levi-Civita connection associated with a Riemannian metric. Recall that $\nabla^\star \nabla$, acting on differential forms, differs from the Hodge-Laplace by lower order curvature terms. The eigenvalue problem for $\nabla^\star \nabla$ could be approached using Crouzeix-Raviart elements. Remarkably, these are well defined in Regge calculus.

\item Thirdly, we have already motivated our results by applications to electromagnetic fields. The continuous Dirac equation has a gauge symmetry, associated with adding gradients to the magnetic vector potential. It would seem interesting to try to achieve a consistent gauge invariant discrete method. On the other hand this does not seem as important for the Dirac equation as for the Yang-Mills equations, the difference being the size of the kernel, which is infinite-dimensional in the second case.
\end{itemize}


\section{Acknowledgements}
Interesting discussions with Tore G. Halvorsen and Torquil M. S\o rensen are gratefully acknowledged, in particular on the ''fermion doubling problem''.

Key results were obtained while visiting University Paris 6, \'Ecole Normale Sup\'erieure and Institut Henri Poincar\'e, in spring 2013 and autumn 2015. The author is grateful for financial support, stimulating environments and kind hospitality.

This research was supported by the European Research Council through the FP7-IDEAS-ERC Starting Grant scheme, project 278011 STUCCOFIELDS.

\bibliography{../Bibliography/alexandria,../Bibliography/newalexandria,../Bibliography/mybibliography}{}

\begin{thebibliography}{10}

\bibitem{ArnAwa14}
D.~N. Arnold and G.~Awanou.
\newblock Finite element differential forms on cubical meshes.
\newblock {\em Math. Comp.}, 83(288):1551--1570, 2014.

\bibitem{ArnFalWin06}
D.~N. Arnold, R.~S. Falk, and R.~Winther.
\newblock Finite element exterior calculus, homological techniques, and
  applications.
\newblock {\em Acta Numer.}, 15:1--155, 2006.

\bibitem{ArnFalWin10}
D.~N. Arnold, R.~S. Falk, and R.~Winther.
\newblock Finite element exterior calculus: from {H}odge theory to numerical
  stability.
\newblock {\em Bull. Amer. Math. Soc. (N.S.)}, 47(2):281--354, 2010.

\bibitem{BabOsb89}
I.~Babu{\v{s}}ka and J.~E. Osborn.
\newblock Finite element-{G}alerkin approximation of the eigenvalues and
  eigenvectors of selfadjoint problems.
\newblock {\em Math. Comp.}, 52(186):275--297, 1989.

\bibitem{BabOsb94}
I.~Babu{\v{s}}ka and J.~E. Osborn.
\newblock Corrigendum: ``{F}inite element-{G}alerkin approximation of the
  eigenvalues and eigenvectors of selfadjoint problems'' [{M}ath.\ {C}omp.\
  {\bf 52} (1989), no.\ 186, 275--297; {MR}0962210 (89k:65132)].
\newblock {\em Math. Comp.}, 63(208):831--832, 1994.

\bibitem{Bai66}
C.~Baiocchi.
\newblock Un teorema di interpolazione: {A}pplicazioni ai problemi ai limiti
  per le equazioni differenziali a derivate parziali.
\newblock {\em Ann. Mat. Pura Appl. (4)}, 73:233--251, 1966.

\bibitem{BanOsb90}
U.~Banerjee and J.~E. Osborn.
\newblock Estimation of the effect of numerical integration in finite element
  eigenvalue approximation.
\newblock {\em Numer. Math.}, 56(8):735--762, 1990.

\bibitem{BecJoo82}
P.~Beecher and H.~Joos.
\newblock The {K}\"ahler-{D}irac and fermions on the lattice.
\newblock {\em Z. Phys. C - Particles and Fields}, 15:343--365, 1982.

\bibitem{Bof10}
D.~Boffi.
\newblock Finite element approximation of eigenvalue problems.
\newblock {\em Acta Numer.}, 19:1--120, 2010.

\bibitem{BreSco08}
S.~C. Brenner and L.~R. Scott.
\newblock {\em The mathematical theory of finite element methods}, volume~15 of
  {\em Texts in Applied Mathematics}.
\newblock Springer, New York, third edition, 2008.

\bibitem{Buf05}
A.~Buffa.
\newblock Remarks on the discretization of some noncoercive operator with
  applications to heterogeneous {M}axwell equations.
\newblock {\em SIAM J. Numer. Anal.}, 43(1):1--18 (electronic), 2005.

\bibitem{BufChr03}
A.~Buffa and S.~H. Christiansen.
\newblock The electric field integral equation on {L}ipschitz screens:
  definitions and numerical approximation.
\newblock {\em Numer. Math.}, 94(2):229--267, 2003.

\bibitem{Cha83}
F.~Chatelin.
\newblock {\em Spectral approximation of linear operators}.
\newblock Computer Science and Applied Mathematics. Academic Press Inc.
  [Harcourt Brace Jovanovich Publishers], New York, 1983.
\newblock With a foreword by P. Henrici, With solutions to exercises by Mario
  Ahu{\'e}s.

\bibitem{Chr02PhD}
S.~H. Christiansen.
\newblock {R}\'esolution des \'equations int\'egrales pour la diffraction
  d'ondes acoustiques et \'electromagn\'etiques - {S}tabilisation d'algorithmes
  it\'eratifs et aspects de l'analyse num\'erique.
\newblock {\em PhD thesis, {\'E}cole {P}olytechnique, HAL Id: tel-00004520},
  2002.

\bibitem{Chr04MC}
S.~H. Christiansen.
\newblock Discrete {F}redholm properties and convergence estimates for the
  electric field integral equation.
\newblock {\em Math. Comp.}, 73(245):143--167 (electronic), 2004.

\bibitem{Chr07NM}
S.~H. Christiansen.
\newblock Stability of {H}odge decompositions in finite element spaces of
  differential forms in arbitrary dimension.
\newblock {\em Numer. Math.}, 107(1):87--106, 2007.
\newblock [preprint at arXiv:1007.1120].

\bibitem{Chr08M3AS}
S.~H. Christiansen.
\newblock A construction of spaces of compatible differential forms on cellular
  complexes.
\newblock {\em Math. Models Methods Appl. Sci.}, 18(5):739--757, 2008.

\bibitem{Chr09AWM}
S.~H. Christiansen.
\newblock Foundations of finite element methods for wave equations of {M}axwell
  type.
\newblock In {\em Applied {W}ave {M}athematics}, pages 335--393. Springer,
  Berlin {H}eidelberg, 2009.

\bibitem{Chr09Calc}
S.~H. Christiansen.
\newblock On the div-curl lemma in a {G}alerkin setting.
\newblock {\em Calcolo}, 46(3):211--220, 2009.

\bibitem{Chr10CRAS}
S.~H. Christiansen.
\newblock \'{E}l\'ements finis mixtes minimaux sur les poly\`edres.
\newblock {\em C. R. Math. Acad. Sci. Paris}, 348(3-4):217--221, 2010.

\bibitem{Chr11NM}
S.~H. Christiansen.
\newblock On the linearization of {R}egge calculus.
\newblock {\em Numerische Mathematik}, 119:613--640, 2011.

\bibitem{ChrGil16}
S.~H. Christiansen and A.~Gillette.
\newblock Constructions of some minimal finite element systems.
\newblock {\em Math. Model. Numer. Anal.}, 50(3):833--850, 2016.
\newblock [preprint at arXiv:1504.04670].

\bibitem{ChrMunOwr11}
S.~H. Christiansen, H.~Z. Munthe-Kaas, and B.~Owren.
\newblock Topics in structure-preserving discretization.
\newblock {\em Acta Numerica}, 20:1--119, 2011.

\bibitem{ChrRap16}
S.~H. Christiansen and F.~Rapetti.
\newblock On high order finite element spaces of differential forms.
\newblock {\em Math. Comp.}, 85(296):517--548, 2016.
\newblock [preprint at arXiv:1306.4835].

\bibitem{ChrSch11}
S.~H. Christiansen and C.~Scheid.
\newblock Convergence of a constrained finite element discretization of the
  maxwell klein gordon equation.
\newblock {\em ESAIM: Mathematical Modelling and Numerical Analysis},
  45(04):739--760, 2011.

\bibitem{ChrWin08}
S.~H. Christiansen and R.~Winther.
\newblock Smoothed projections in finite element exterior calculus.
\newblock {\em Math. Comp.}, 77(262):813--829, 2008.

\bibitem{ChrWin13IMA}
S.~H. Christiansen and R.~Winther.
\newblock On variational eigenvalue approximation of semidefinite operators.
\newblock {\em IMA J. Numer. Anal.}, 33(1):164--189, 2013.

\bibitem{CocQiu14}
B.~Cockburn and W.~Qiu.
\newblock Commuting diagrams for the {TNT} elements on cubes.
\newblock {\em Math. Comp.}, 83(286):603--633, 2014.

\bibitem{DodPat76}
J.~Dodziuk and V.~K. Patodi.
\newblock Riemannian structures and triangulations of manifolds.
\newblock {\em J. Indian Math. Soc. (N.S.)}, 40(1-4):1--52, 1976.

\bibitem{DroEymGalHer13}
J.~Droniou, R.~Eymard, T.~Gallouet, and R.~Herbin.
\newblock Gradient schemes: a generic framework for the discretisation of
  linear, nonlinear and nonlocal elliptic and parabolic equations.
\newblock {\em Math. Models Methods Appl. Sci.}, 23(13):2395--2432, 2013.

\bibitem{Heu14}
N.~Heuer.
\newblock On the equivalence of fractional-order {S}obolev semi-norms.
\newblock {\em J. Math. Anal. Appl.}, 417(2):505--518, 2014.

\bibitem{Hip02}
R.~Hiptmair.
\newblock Finite elements in computational electromagnetism.
\newblock {\em Acta Numer.}, 11:237--339, 2002.

\bibitem{HipLiZou12}
R.~Hiptmair, J.~Li, and J.~Zou.
\newblock Real interpolation of spaces of differential forms.
\newblock {\em Math. Z.}, 270(1-2):395--402, 2012.

\bibitem{Kik89}
F.~Kikuchi.
\newblock On a discrete compactness property for the {N}\'ed\'elec finite
  elements.
\newblock {\em J. Fac. Sci. Univ. Tokyo Sect. IA Math.}, 36(3):479--490, 1989.

\bibitem{LewSer10}
M.~Lewin and {\'E}.~S{\'e}r{\'e}.
\newblock Spectral pollution and how to avoid it (with applications to {D}irac
  and periodic {S}chr\"odinger operators).
\newblock {\em Proc. Lond. Math. Soc. (3)}, 100(3):864--900, 2010.

\bibitem{LeoSte14}
Leopardi P. and A.~Stern.
\newblock The abstract {H}odge-{D}irac operator and its stable discretization.
\newblock {\em arXiv:1401.1576}, 2014.

\bibitem{Reg61}
T.~Regge.
\newblock General relativity without coordinates.
\newblock {\em Nuovo Cimento (10)}, 19:558--571, 1961.

\bibitem{Sch08}
J.~Sch{\"o}berl.
\newblock A posteriori error estimates for {M}axwell equations.
\newblock {\em Math. Comp.}, 77(262):633--649, 2008.

\bibitem{Tar07}
L.~Tartar.
\newblock {\em An introduction to {S}obolev spaces and interpolation spaces},
  volume~3 of {\em Lecture Notes of the Unione Matematica Italiana}.
\newblock Springer, Berlin; UMI, Bologna, 2007.

\bibitem{Tay96II}
M.~E. Taylor.
\newblock {\em Partial differential equations. {II}}, volume 116 of {\em
  Applied Mathematical Sciences}.
\newblock Springer-Verlag, New York, 1996.
\newblock Qualitative studies of linear equations.

\end{thebibliography}
\bibliographystyle{plain}

\end{document}